\documentclass[leqno, a4paper, 12pt]{article}

\setlength{\textheight}{23cm}

\setlength{\textwidth}{16cm}

\setlength{\oddsidemargin}{0cm}

\setlength{\evensidemargin}{0cm}

\setlength{\topmargin}{0cm}

\usepackage{amssymb, amscd, amsthm}

\theoremstyle{plain}
\newtheorem{theorem}{Theorem}[section]
\newtheorem{lemma}[theorem]{Lemma}
\newtheorem{proposition}[theorem]{Proposition}
\newtheorem{corollary}[theorem]{Corollary}

\theoremstyle{definition}

\newtheorem{remark}[theorem]{Remark}

\newtheorem{examples}[theorem]{Examples}


\newcommand\bC{{\mathbb C}}

\newcommand\bG{{\mathbb G}}

\newcommand\bL{{\mathbb L}}

\newcommand\bP{{\mathbb P}}

\newcommand\bR{{\mathbb R}}

\newcommand\bZ{{\mathbb Z}}

\newcommand\bPic{{\bf Pic}}

\newcommand\cA{{\mathcal A}}

\newcommand\cL{{\mathcal L}}
\newcommand\cM{{\mathcal M}}

\newcommand\cO{{\mathcal O}}

\newcommand\cQ{{\mathcal Q}}

\newcommand\fm{\mathfrak{m}}

\newcommand\wA{\widehat{A}}
\newcommand\wT{\widehat{T}}

\newcommand\charc{{\rm char}}

\newcommand\et{\rm{\acute{e}t}}
\newcommand\id{{\rm id}}

\newcommand\red{{\rm red}}

\newcommand\sep{{\rm sep}}

\newcommand\R{{\rm R}}

\newcommand\Ext{{\rm Ext}}

\newcommand\Hom{{\rm Hom}}

\newcommand\Kern{{\rm Ker}}

\newcommand\Pic{{\rm Pic}}

\newcommand\Proj{{\rm Proj}}

\newcommand\Spec{{\rm Spec}}

\newcommand\Sym{{\rm Sym}}

\title{Which algebraic groups are Picard varieties?}

\author{Michel Brion}

\date{}

\begin{document}

\maketitle
 
\begin{abstract}
We show that every connected commutative algebraic group 
over an algebraically closed field of characteristic $0$ 
is the Picard variety of some projective variety having
only finitely many non-normal points. In contrast, no Witt 
group of dimension at least $3$ over a perfect field of 
prime characteristic is isogenous to a Picard variety 
obtained by this construction.
\end{abstract}

\section{Introduction and statement of the main results}
\label{sec:int}

With any proper scheme $X$ over a field $k$, one associates the 
Picard scheme $\bPic_{X/k}$ and its neutral component $\bPic^0_{X/k}$, 
a connected group scheme of finite type which parameterizes the
algebraically trivial invertible sheaves on $X$. When $k$ is perfect,
the reduced neutral component of $\bPic_{X/k}$ is an algebraic goup, 
classically known as the Picard variety $\Pic^0(X)$. One may ask 
whether any connected commutative algebraic group can be obtained 
in this way. In this article, we obtain a positive answer to that 
question when $k$ is algebraically closed of characteristic $0$, 
and a negative partial answer in prime characteristics. The analogous 
question for the reduced neutral component of the automorphism group 
scheme is answered in the positive by \cite[Thm.~1]{Brion}.

By general structure results, every connected commutative algebraic
group $G$ over a perfect field sits in a unique exact sequence
$0 \to U \times T \to G \to A \to 0$,
where $U$ is a connected unipotent algebraic group, $T$ a torus, 
and $A$ an abelian variety. Conversely, given such an exact sequence, 
we shall construct a projective variety $X$ such that 
$\Pic^0(X) \cong G$, under additional assumptions on the affine part
$U \times T$. Our result holds more generally in the setting of 
the relative Picard functor (see \cite{BLR,Kleiman}):

\begin{theorem}\label{thm:first}
Let $S$ be a locally noetherian scheme, and 
\begin{equation}\label{eqn:first} 
0 \longrightarrow V \times T \longrightarrow G 
\longrightarrow A \longrightarrow 0 
\end{equation}
an exact sequence of commutative $S$-group schemes, where $V$ 
is a vector group, $T$ a quasi-split torus, and $A$ an abelian scheme. 
Then there exists a proper flat $S$-scheme $X$ with integral 
geometric fibers, such that $G \cong \bPic^0_{X/S}$. Moreover,
$X$ may be taken locally projective over $S$, if $A$ is locally
projective.
\end{theorem}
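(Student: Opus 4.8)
The plan is to realize $G$ by \emph{pinching} a smooth projective $S$-scheme along a finite closed subscheme, and to control the resulting extension of $A$ by reading off its class from the gluing data; I first treat the case where $T=\bG_m^n$ is split and $V=\bG_a^m$, and then indicate the modifications for a general quasi-split $T$ and a general vector group $V$. Let $\wA=\bPic^0_{A/S}$ be the dual abelian scheme, so that $\bPic^0_{\wA/S}\cong A$ by biduality and $\wA$ is locally projective over $S$ when $A$ is. Fix $N$ large and set $Y:=\wA\times_S\bP^N_S$; then $\bPic^0_{Y/S}\cong A$, $Y$ is smooth and proper (locally projective) over $S$ with integral geometric fibers, and $Y$ carries an abundant supply of pairwise disjoint sections and of length-$2$ subschemes supported along sections, obtained by assigning to each of them a distinct constant coordinate point of $\bP^N$. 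Now let $Z\hookrightarrow Y$ be finite flat over $S$ and disjoint from a relatively ample divisor, let $Y'\to S$ be finite flat, and let $Z\to Y'$ be a finite surjection; form the Ferrand pushout $X:=Y\amalg_Z Y'$. Then $X$ is proper and flat over $S$ with integral geometric fibers, locally projective when $Y$ is, and the conductor square $(Z,Y,Y',X)$ is cartesian and cocartesian, so that $\cO_X=\cO_Y\times_{\cO_Z}\cO_{Y'}$ and hence $\cO_X^\times=\cO_Y^\times\times_{\cO_Z^\times}\cO_{Y'}^\times$. Passing to the relative Picard functor (\cite{BLR,Kleiman}) yields an exact sequence of commutative $S$-group schemes
\[
0\longrightarrow \cQ\longrightarrow \bPic^0_{X/S}\longrightarrow \bPic^0_{Y/S}\longrightarrow 0,
\]
where $\cQ:=(\prod_{Z/S}\bG_m)/(\prod_{Y'/S}\bG_m)$ (Weil restrictions) is a connected affine $S$-group scheme supported at the non-normal locus, and is the affine part of $\bPic^0_{X/S}$ since $\bPic^0_{Y/S}=A$ is an abelian scheme.

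The class of (\ref{eqn:first}) lies in $\Ext^1_S(A,V\times T)\cong\Ext^1_S(A,V)\oplus\Ext^1_S(A,T)$, and since $\cQ$ decomposes accordingly it suffices to realize each component by a separate piece of $Z$. For the $i$-th $\bG_m$-factor of $T$: by the relative Barsotti--Weil isomorphism $\Ext^1_S(A,\bG_m)\cong\wA(S)$, so a prescribed class $\xi_i$ is realized by gluing two sections $s_i,t_i\colon S\to\wA\subset Y$ with $s_i-t_i=\xi_i$ (made disjoint in $Y$ by distinct $\bP^N$-coordinates), collapsing $s_i(S)\sqcup t_i(S)$ to a single copy of $S$; the local contribution to $\cQ$ is $\bG_m^2/\bG_m\cong\bG_m$, and the class of the corresponding sub-extension of $\bPic^0_{X/S}$ is $[s_i]-[t_i]=\xi_i$. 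For the $j$-th $\bG_a$-factor of $V$, in characteristic $0$ one has $\Ext^1_S(A,\bG_a)\cong H^0(S,R^1\!f_*\cO_A)=H^0(S,\Lie(\wA/S))$; a prescribed class $\eta_j$ is realized by pinching a length-$2$ subscheme of $Y$ supported along a section whose tangent direction $v_j\in T(Y/S)$ has $\Lie(\wA/S)$-component equal to $\eta_j$ — possible for every $\eta_j$, since the $\bP^N$-directions supply the remaining freedom and ensure $v_j\neq 0$ even when $\eta_j=0$ — and collapsing that subscheme to its support; the local contribution to $\cQ$ is $\bG_a$, and the class of the corresponding sub-extension is $\eta_j$. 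Taking $Z$ to be the disjoint union of all these data and $Y'$ the corresponding disjoint union of copies of $S$ (arranged disjointly for $N\ge 2n+m$), the pushout $X$ satisfies $\bPic^0_{X/S}\cong G$.

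If $T=\prod_\ell\prod_{S_\ell/S}(\bG_m)$ with $S_\ell\to S$ finite étale, one replaces in the $\ell$-th block the pair of $S$-sections by a pair of $S_\ell$-sections of $\wA_{S_\ell}$ (so $Z$ carries an étale $S$-algebra there), producing a local contribution $\prod_{S_\ell/S}(\bG_m)$ to $\cQ$, and the prescribed class is realized via Shapiro's lemma $\Ext^1_S(A,\prod_{S_\ell/S}\bG_m)\cong\wA(S_\ell)$. For a general vector group $V=\bV(\cE)$ one pinches a first-order thickening of a section whose conormal sheaf is $\cE^\vee$, with a ``twisted tangent direction'' — a section of $\Lie(\wA/S)\otimes\cE$ — realizing the given class in $\Ext^1_S(A,V)\cong H^0(S,\Lie(\wA/S)\otimes\cE)$. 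Combining all blocks produces a proper flat $S$-scheme $X$ with integral geometric fibers, locally projective when $A$ is (a Ferrand pushout of a locally projective scheme along a finite subscheme is locally projective), such that $\bPic^0_{X/S}\cong G$.

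The heart of the argument is the identification, for each type of gluing, of the connecting homomorphism in the relative Mayer--Vietoris sequence — that is, of the class of the sub-extension of $\bPic^0_{X/S}$ that the gluing produces — with the classical dictionaries: a pair of sections maps to their difference in $\wA(S)$ via Barsotti--Weil for $\bG_m$, and a tangent direction maps to its $\Lie(\wA/S)$-component in $H^1(\cO_A)$ for $\bG_a$. One must also check that these identifications commute with base change on $S$, so that the relative statement follows from its content over the geometric fibers. The remaining points — that the Ferrand pushout is proper, flat, with integral geometric fibers, and locally projective, that $\cO_X^\times=\cO_Y^\times\times_{\cO_Z^\times}\cO_{Y'}^\times$, and that $\Ext^1_S(A,\bG_a)=H^1(\cO_A)$ in characteristic $0$ — are comparatively routine.
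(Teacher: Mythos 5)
Your construction is essentially the paper's: pinch $\wA \times_S \bP^N_S$ along finite subschemes --- a pair of sections of $\wA$ differing by the prescribed Barsotti--Weil class for each torus block, a first-order thickening of a section in a prescribed tangent direction for the vector part --- and read off $\bPic^0_{X/S}$ from the units/conductor exact sequence. The formal layer (Ferrand pushout, the sequence $0 \to \cQ \to \bPic^0_{X/S} \to A \to 0$, the reduction to one block at a time, local projectivity) matches Section~\ref{sec:ppf}, and the dictionaries you invoke are the correct ones.

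The gap is precisely the step you yourself label ``the heart of the argument'': you assert, but do not prove, that the sub-extension of $\bPic^0_{X/S}$ obtained by gluing $s_i(S)\sqcup t_i(S)$ has Barsotti--Weil class $s_i-t_i$, and that the one obtained by collapsing a length-two thickening with tangent direction $v_j$ has class the image of $v_j$ in $\Lie(\wA/S)$. This is where all the content lies, and it is not a formality: the paper devotes Propositions \ref{prop:vect} and \ref{prop:semi} to it, passing through the rigidified Picard functor $(\Pic_{X'/S},Y')$, \"Onsiper's identification of the extension $(\bPic^0_{\wA/S},Y')$ of $A$ by $T$ with the sum of the classes of the chosen points, and Mazur--Messing's theorem that $(\bPic^0_{\wA/S},I_S(\omega_{\wA}))$ is the universal vector extension of $A$, from which the class of the pullback along $I_S(\gamma)$ is computed; a descent step is also needed to pass from split to quasi-split tori. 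Two smaller points. First, you compute $\Ext^1_S(A,\bG_a)$ ``in characteristic $0$'', but the theorem is over an arbitrary locally noetherian $S$; the classification of vector extensions you need (as $\Gamma(S,\Lie(\wA/S)\otimes\cE)$, equivalently $\Hom(\omega_{\wA},\cQ^{\vee})$) holds in all characteristics by Mazur--Messing, and using it uniformly also merges your $\bG_a$ and general-vector-group cases. Second, the parenthetical ``a Ferrand pushout of a locally projective scheme along a finite subscheme is locally projective'' is not a general fact --- pinching does not preserve projectivity (see \cite[Sec.~6]{Ferrand}); it holds here only because $Y'$ is finite flat over $S$, so an ample sheaf can be trivialized on $Y'$ Zariski-locally on $S$ and then descended to $X$, as in Proposition \ref{prop:rep}~(i).
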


Here a vector group is the additive group scheme of a locally free
sheaf of finite rank; a quasi-split torus is a group scheme
$T$ such that the pull-back $T_{S'}$ under some finite \'etale Galois 
cover $S' \to S$ with group $\Gamma$ is isomorphic to a direct product 
of finitely many copies of $\bG_{m,S'}$ which are permuted by $\Gamma$. 

Under the assumptions of that theorem, we now sketch 
how to construct the desired scheme $X$ from the exact sequence 
(\ref{eqn:first}). We use the process of pinching studied in 
\cite{Ferrand}; more specifically, we obtain $X$ by pinching 
an appropriate smooth $S$-scheme $X'$ along a finite subscheme $Y'$ 
via a morphism $\psi : Y' \to Y$. 
We then have an exact sequence
\[ 0 \longrightarrow \bG_{m,S} \longrightarrow V_{Y'}^* 
\longrightarrow (\Pic_{X'/S},Y') \longrightarrow
\Pic_{X'/S} \longrightarrow 0, \]
where $V_{Y'}^*$ is a smooth affine group scheme with connected 
fibers, defined by $V_{Y'}^*(S') = \cO(Y'_{S'})^*$ for any scheme $S'$ 
over $S$; $\Pic_{X'/S}$ stands for the relative Picard functor, 
and $(\Pic_{X'/S},Y')$ parameterizes the
invertible sheaves on $X'$, rigidified along $Y'$ (see 
\cite[8.1]{BLR}). There is of course an analogous sequence for 
$(X,Y)$; in addition, one easily obtains an isomorphism of
rigidified Picard functors $(\Pic_{X/S},Y) \cong (\Pic_{X'/S},Y')$. 
All of this yields an exact sequence 
\begin{equation}\label{eqn:funct}
0 \longrightarrow V_{Y'}^*/\psi^*(V_Y^*) \longrightarrow \Pic_{X/S} 
\longrightarrow \Pic_{X'/S} \longrightarrow 0.
\end{equation}
It remains to find $X'$, $Y'$ and $\psi$ so that (\ref{eqn:funct}) 
gives back the exact sequence (\ref{eqn:first}). For this, 
we use a result of \"Onsiper: every extension of an abelian scheme 
by the direct product of a vector group and a split torus can be 
constructed as a rigidified Picard functor (see \cite{Onsiper}). 
A slight modification of that construction yields the desired objects; 
note that \cite{Onsiper} uses the notion of rigidifier as in 
\cite{Raynaud}, which is weaker than that of \cite{BLR}.

Over an algebraically closed field of characteristic $0$, every 
connected commutative unipotent group is a vector group, and 
every torus is (quasi-)split; hence any connected commutative 
algebraic group is the Picard variety of some projective variety
with finite singular locus. But this does not extend to prime 
characteristics:

\begin{theorem}\label{thm:second}
Let $W_n$ denote the Witt group of dimension $n$ over a perfect 
field $k$ of characteristic $p > 0$. Then $W_n$ is not isogenous 
to the Picard variety of any projective variety with finite
non-normal locus, if $p \geq 5$ and $n \geq 2$ (resp. 
$p \leq 3$ and $n \geq 3$).
\end{theorem}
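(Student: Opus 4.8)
\medskip
\noindent\textbf{Proof strategy.}
Suppose for contradiction that $X$ is a projective variety over $k$ with finite non-normal locus and that $\varphi\colon W_n\to\Pic^0(X)$ is an isogeny; after base change we may assume $k=\kb$. Let $\nu\colon\bar X\to X$ be the normalization. As explained above, $X$ is obtained from $\bar X$ by pinching along a morphism $\psi\colon\bar Y\to Y$ of finite $k$-schemes, where $\bar Y=\nu^{-1}(Y)$ and $Y$ carries the conductor scheme structure; moreover $\cO(\bar X)^*=k^*$, so (\ref{eqn:funct}) on neutral components gives an exact sequence
\[ 0\longrightarrow H\longrightarrow\Pic^0(X)\longrightarrow\Pic^0(\bar X)\longrightarrow 0,\]
with $H$ a connected commutative affine algebraic group and $H(k)=\cO(\bar Y)^*/\cO(Y)^*$. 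The first step is to kill the abelian part: $W_n$ is unipotent, hence so are $\Pic^0(X)$ and its quotient $\Pic^0(\bar X)$; but $\Pic^0(\bar X)$ is an abelian variety, since $\bar X$ is normal and projective (for surfaces this follows by embedding $\Pic^0(\bar X)$ into $\Pic^0$ of a resolution of $\bar X$, and the general case reduces to surfaces via general hyperplane sections, which stay normal), and therefore $\Pic^0(\bar X)=0$. Thus $\Pic^0(X)\cong H$ and $H$ is isogenous to $W_n$.

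Next I would translate this into algebra. Write $B:=\cO(Y)\hookrightarrow A:=\cO(\bar Y)$, finite-dimensional $k$-algebras, the inclusion induced by $\psi$; the crucial feature of the conductor is that no nonzero ideal of $A$ lies in $B$, i.e. $\{a\in A:aA\subseteq B\}=0$. Since $H$ is unipotent its torus part vanishes, which forces the semisimple quotients of $A$ and $B$ to have equal dimension; hence $H\cong(1+\mathfrak r_A)/(1+\mathfrak r_B)$, where $\mathfrak r$ denotes the nilradical, and
\[ \dim H=\dim_kA-\dim_kB=\dim_k(\mathfrak r_A/\mathfrak r_B)=n,\]
the last equality because $H$ is isogenous to $W_n$. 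Isogenous commutative groups have the same exponent (there are isogenies both ways), so $H$ has exponent $p^n$; hence there is a $k$-point of $H$ of order exactly $p^n$, which we represent by $1+x$ with $x\in\mathfrak r_A$. Using $(1+x)^{p^j}=1+x^{p^j}$ in characteristic $p$, this means $x^{p^{n-1}}\notin B$ (in particular $x^{p^{n-1}}\neq 0$) while $x^{p^n}\in B$.

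Now the contradiction. The subalgebra $k[x]\subseteq A$ is isomorphic to $k[t]/(t^M)$ with $M>p^{n-1}$, and $C:=k[x]\cap B$ is a subalgebra of $k[x]$ with $\codim_{k[x]}C\le\dim_k(A/B)=n$. By the standard bound on the conductor of a subring of a discrete valuation ring (via the valuation semigroup), the largest ideal $(t^c)$ of $k[t]/(t^M)$ contained in $C$ satisfies $c\le 2\,\codim_{k[x]}C\le 2n$; thus $x^c\in C\subseteq B$. Comparing with $x^{p^{n-1}}\notin B$ forces $p^{n-1}<c\le 2n$, an inequality which fails precisely when $p\ge 5$ and $n\ge 2$, and when $p=3$ and $n\ge 3$ --- exactly the ranges where a contradiction is wanted. (For $p=3$, $n=2$ the method genuinely allows $W_2$, in agreement with the statement.) The remaining case $p=2$, $n\ge 3$ is the tightest: here the rough bound $p^{n-1}<2n$ only gives $n\le 3$, and one must sharpen it --- either by improving $c\le 2n$ to $c<2n$ outside the Gorenstein (symmetric semigroup) case and excluding that case by means of the conductor condition $\{a:aA\subseteq B\}=0$, or by analyzing directly the dimension of the $p$-torsion subgroup $H[p]$, which is pinned down by the isogeny $H\sim W_3$. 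I expect this last point, together with checking carefully that $\Pic^0$ of a normal projective variety really has no unipotent part, to be where the actual work lies; the rest is formal.
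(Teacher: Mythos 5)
Your reduction to commutative algebra coincides with the paper's: pinching along the conductor gives an exact sequence with affine connected kernel, the Picard variety of the normalization is an abelian variety and hence vanishes here, and the vanishing of the torus part identifies $\Pic^0(X)$ with $(1+\mathfrak{r}_A)/(1+\mathfrak{r}_B)$ (this is Proposition~\ref{prop:pic} combined with Lemma~\ref{lem:red} and Proposition~\ref{prop:max}). The exponent-$p^n$ input is Serre's classification of commutative unipotent groups up to isogeny, which is what the paper cites and what you should cite too: ``there are isogenies both ways'' is not formal for unipotent groups, since $[p]$ is not an isogeny of $\bG_a$. The endgame, however, is genuinely different. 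The paper builds a flag of subalgebras between the two nilradicals (Lemma~\ref{lem:small}), deduces $J^{2^n}\subset I$, and argues case by case; you isolate a single unit $1+x$ of exact order $p^n$, pass to $k[x]\cong k[t]/(t^M)$, and apply the bound ``conductor $\le 2\times(\hbox{number of gaps})$'' to the value semigroup of $C=k[x]\cap B$. That step is sound: adjoining $[M,\infty)$ to the set of orders of vanishing of nonzero elements of $C$ does yield a numerical semigroup, its gap count equals $\dim_k(k[x]/C)\le\dim_k(A/B)=n$, and a downward induction shows the conductor ideal $(t^c)$, $c\le 2n$, lies in $C\subseteq B$, contradicting $x^{p^{n-1}}\notin B$ whenever $p^{n-1}\ge 2n$. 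This disposes of $p\ge 5,\ n\ge 2$; $p=3,\ n\ge 3$; and $p=2,\ n\ge 4$ in one uniform stroke, where the paper treats each $p$ separately.

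But $(p,n)=(2,3)$ is a genuine gap, not a loose end: there $p^{n-1}=4<6=2n$, your inequality is consistent, and neither proposed repair is carried out; the first fails as stated. Excluding the symmetric (Gorenstein) case only rules out conductor $c=2g=6$, whereas the non-symmetric semigroup $\{0,3\}\cup[5,\infty)$ has three gaps, conductor $5$, and omits $4$, so it is compatible with $x^4\notin B$ and survives your counting entirely. This configuration is precisely where the paper does its real work: Lemma~\ref{lem:period2} first reduces to the case where $J$ is generated by a single element and $I\subset J^2$, and then uses the conductor conditions (no nonzero ideal of $B$ inside $A$, in your notation) in an explicit computation to force the period down to $4$. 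Your second suggestion (pinning down $H[p]$ via the isogeny with $W_3$) is a plausible direction but is not yet an argument. As written, the proposal proves the theorem in all cases except $p=2$, $n=3$, and that case cannot be recovered by sharpening the semigroup bound alone.
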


It should be noted that the affine part of the Picard variety 
of any proper reduced scheme $X$ over a perfect field $k$ has been 
described by Geisser in \cite{Geisser}. In particular, the 
maximal torus of $\Pic^0(X)$ has cocharacter module isomorphic 
to $H^1_{\et}(X_{\bar{k}},\bZ)$ as a Galois module
(see \cite[Thm.~1]{Geisser}, and \cite[Thm.~4.1.7]{Alexeev}
for a closely related result). We do not know whether all tori
(or equivalently, all Galois modules) can be obtained in this way. 
When the non-normal locus of $X$ is finite, the maximal torus of 
$\Pic^0(X)$ must be stably rational, see Remark \ref{rem:stab}.

This article is organized as follows. In Section \ref{sec:ppf},
we begin by gathering results taken from \cite{Ferrand} about 
pinching and Picard groups; then we obtain the exact sequence 
(\ref{eqn:funct}) together with representability of the 
associated Picard functors under suitable assumptions. Section 
\ref{sec:ext} constructs some extensions of abelian schemes by 
adapting the results of \cite{Onsiper}; it concludes with the proof
of Theorem \ref{thm:first}. In Section \ref{sec:rel}, 
we study the quotients $\mu^B/\mu^A$, where $A \subset B$ are 
artinian algebras over a field and $\mu^A \subset \mu^B$ denote 
the associated unit group schemes. These quotients are exactly 
the affine parts of Picard varieties of projective varieties 
with finite non-normal locus, see Proposition \ref{prop:pic}. 
We conclude with the proof of Theorem \ref{thm:second}.

\section{Pinching and Picard functor}
\label{sec:ppf}

\subsection{Pinched schemes}
\label{subsec:fer}

Throughout this section, we fix a locally noetherian base scheme $S$. 
Schemes are assumed to be separated and of finite type over $S$ 
unless otherwise mentioned. 

Let $X'$ be a scheme, $\iota': Y' \to X'$  the inclusion of 
a closed subscheme, and $\psi : Y' \to Y$ a finite morphism. 
We assume that the natural map $\cO_Y \to \psi_*(\cO_{Y'})$ is injective; 
in particular, $\psi$ is surjective. We also assume that $X'$ and $Y$ 
satisfy the following condition:

\smallskip

\noindent
(AF) \quad Every finite set of points is contained in an open 
affine subscheme.

\smallskip

\noindent
Under these assumptions, there exists a cocartesian diagram of schemes
\begin{equation}\label{eqn:pin}
\CD
Y' @>{\iota'}>> X' \\
@V{\psi}VV @V{\varphi}VV \\
Y @>{\iota}>> X,\\
\endCD
\end{equation}
where $\iota$ is a closed immersion, $\varphi$ is finite, 
and $X$ satisfies (AF). Moreover, $\varphi$ 
induces an isomorphism $X' \setminus Y' \to X \setminus Y$; in 
particular, $\varphi$ is surjective. We say that 
\emph{$X$ is obtained by pinching $X'$ along $Y'$ via $\psi$}.

These results follow from \cite[Thm.~5.4, Prop.~5.6]{Ferrand}, except 
for the assertion that $X$ is of finite type over $S$, which is a 
consequence of \cite[Chap.~V, \S 1, no. 9, Lem.~5]{Bourbaki}.
If in addition $X'$ is proper over $S$, then so is $X$ (since 
$\varphi : X' \to X$ is finite and surjective). But projectivity is
not preserved under pinching, as shown by the examples in
\cite[Sec.~6]{Ferrand}.

Since the formation of $X$ is Zariski local on $S$, we may replace 
(AF) with a slightly weaker condition:

\smallskip

\noindent
(LAF) \quad $S$ is covered by open subschemes $S_i$ such that every finite 
set of points over $S_i$ is contained in an open affine subscheme.

\smallskip

This condition holds in particular for locally projective $S$-schemes.

\subsection{Their invertible sheaves}
\label{subsec:inv} 

With the notation and assumptions of Subsection \ref{subsec:fer}, 
the data of an invertible sheaf $\cL$ on $X$ is equivalent to that of 
a triple $(\cL',s',\cM)$, where $\cL'$ (resp. $\cM$) is an invertible 
sheaf on $X'$ (resp. $Y$), and $s' : \psi^*(\cM) \to \iota'^*(\cL')$ 
is an isomorphism.
Namely, one associates with $\cL$ the sheaves 
$\cL' := \varphi^*(\cL)$, $\cM := \iota^*(\cL)$ and the isomorphism
\[ \psi^*(\cM) = \psi^* \iota^*(\cL) \longrightarrow 
\iota'^* \varphi^*(\cL) = \iota'^*(\cL') \]
arising from the commutative diagram (\ref{eqn:pin}).

Moreover, the isomorphisms $\cL_1 \to \cL_2$ are equivalent to the pairs
$(u,v)$, where $u : \cL'_1 \to \cL'_2$, $v : \cM_1 \to \cM_2$
are isomorphisms such that the diagram
\[ \CD
\psi^*(\cM_1) @>{s'_1}>> \iota'^*(\cL'_1) \\
@V{\psi^*(v)}VV @V{\iota'^*(u)}VV \\
\psi^*(\cM_2) @>{s'_2}>> \iota'^*(\cL'_2) \\
\endCD \]
commutes, with an obvious notation.

These results are consequences of \cite[Thm.~2.2]{Ferrand} 
when $X'$ is affine; the general case follows by using the fact 
that $\varphi$ is affine, as alluded to in [loc.~cit., 7.4] 
and explained in detail in \cite[Thm.~3.13]{Howe}.

In particular, for any $s' \in \cO(Y')^*$ (the unit group of 
the ring of global sections $\cO(Y')$), the triple 
$(\cO_{X'}, s', \cO_Y)$ corresponds to an invertible sheaf on $X$, 
which is trivial if and only if $s' = \iota'^*(u) \psi^*(v)$ 
for some $u \in \cO(X')^*$ and $v \in \cO(Y)^*$.

Also, an invertible sheaf $\cL'$ over $X'$ is the pull-back of
some invertible sheaf on $X$ if and only if 
$\iota'^*(\cL') \cong \psi^*(\cM)$ for some invertible sheaf 
$\cM$ on $Y$.

\subsection{Their Picard functor}
\label{subsec:pic}

We keep the notation and assumptions of Subsection \ref{subsec:fer}, 
and assume in addition the following two conditions:

\smallskip

\noindent
(PF) \quad The structure map $f' : X' \to S$ is proper and flat 
with integral geometric fibers.

\smallskip

\noindent
(FF) \quad The structure maps $g: Y \to S$ and $g' : Y' \to S$ 
are finite and faithfully flat.

\smallskip

The latter condition implies that $Y$ satisfies (LAF). Also,
by \cite[Prop.~7.8.6]{EGAIII}, the condition (PF) 
yields that $f'_*(\cO_{X'}) = \cO_S$ universally. 

We now recall some notions and results from \cite[\S 8.1]{BLR}.
We denote by $\Pic_{X'/S}$ the relative Picard functor, i.e., 
the fppf sheaf associated with the functor $S' \mapsto \Pic(X'_{S'})$, 
where $X'_{S'} := X' \times_S S'$. Since 
$f'^*: \cO(S') \to \cO(X'_{S'})$ is an isomorphism for any $S$-scheme
$S'$, the natural map $\cO(X'_{S'}) \to \cO(Y'_{S'})$ is injective,
and hence $Y'$ is a rigidifier of $\Pic_{X'/S}$. Also, the
functor $S' \mapsto \cO(Y'_{S'})$ is represented by a locally free
ring scheme $V_{Y'}$, and the subfunctor of units,
$S' \mapsto \cO(Y'_{S'})^*$, by a group scheme, open in $V_{Y'}$.
Clearly, $V_{X'} \cong \bG_{a,S}$ and $V_{X'}^* \cong \bG_{m,S}$.
Also, note that
\[ V_{Y'}^* = \R_{Y'/S}(\bG_{m,Y'}), \]
where $\R$ denotes the Weil restriction. 
We have an exact sequence of sheaves for the \'etale topology
\begin{equation}\label{eqn:exa}
0  \longrightarrow V_{X'}^* \longrightarrow V_{Y'}^* \longrightarrow
(\Pic_{X'/S}, Y') \longrightarrow \Pic_{X'/S} \longrightarrow 0,
\end{equation}
where $(\Pic_{X'/S},Y')$ denotes the sheaf of isomorphism classes of 
invertible sheaves on $X'$, ridigified along $Y'$. 

We record some easy additional properties of the unit group 
scheme $V_{Y'}$:

\begin{lemma}\label{lem:units}
With the above notation, we have:

\smallskip

\noindent
{\rm (i)} $V_{Y'}^*$ is a smooth affine group scheme with connected 
fibers. 

\smallskip

\noindent
{\rm (ii)} If $Y'$ is the disjoint union of two closed
subschemes $Y'_1, Y'_2$, then $V_{Y'} \cong V_{Y'_1} \times_S V_{Y'_2}$.
\smallskip

\noindent
{\rm (iii)} $V_{Y'}^*$ is a torus if and only if $Y'$ is \'etale over $S$.

\end{lemma}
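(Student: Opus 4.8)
The plan is to reduce all three assertions to the structure of the fiber algebras of $Y'$ over $S$, using that $g'_*(\cO_{Y'})$ is a locally free $\cO_S$-module of finite rank by (FF). The only step with genuine content is the analysis of these fiber algebras, which I would isolate once and then invoke in (i) and (iii); everything else is formal, so I do not expect a serious obstacle.

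For (i), I would first recall that $V_{Y'}$ is the vector group scheme attached to the locally free sheaf $g'_*(\cO_{Y'})$, hence is smooth and affine over $S$ with geometrically connected fibers. The subfunctor $V_{Y'}^*$ is open in $V_{Y'}$: a section $a$ of the ring scheme $V_{Y'}$ is a unit exactly when the endomorphism ``multiplication by $a$'' of the locally free sheaf $g'_*(\cO_{Y'})$ is invertible, i.e. has nowhere-vanishing determinant, which is an open condition. Thus $V_{Y'}^*$ is smooth over $S$, and it is affine over $S$ because it is the closed subscheme $\{ab=1\}$ of $V_{Y'}\times_S V_{Y'}$ (equivalently, $\R_{Y'/S}(\bG_{m,Y'})$ is affine since $\bG_{m,Y'}$ is affine and $g'$ is finite). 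For connectedness of the fibers, let $\bar s$ be a geometric point of $S$ with residue field $\kb$; the corresponding fiber of $V_{Y'}^*$ is the unit group of the finite $\kb$-algebra $B:=\cO(Y'_{\bar s})$. Writing $B=\prod_i B_i$ as a product of local Artinian $\kb$-algebras with maximal ideals $\fm_i$, one has $B^*=\prod_i B_i^*$, and each $B_i^*$ is an extension of $\kb^*=\bG_m$ by $1+\fm_i$, which is a successive extension of the vector groups $\fm_i^j/\fm_i^{j+1}$ and hence connected; so $B^*$ is connected.

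Part (ii) I would settle directly from the definitions: for every $S$-scheme $S'$ one has $Y'_{S'}=(Y'_1)_{S'}\sqcup (Y'_2)_{S'}$, hence $\cO(Y'_{S'})=\cO((Y'_1)_{S'})\times\cO((Y'_2)_{S'})$ as rings, functorially in $S'$, whence $V_{Y'}\cong V_{Y'_1}\times_S V_{Y'_2}$ as ring schemes, and in particular $V_{Y'}^*\cong V_{Y'_1}^*\times_S V_{Y'_2}^*$.

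For (iii): if $Y'$ is finite \'etale over $S$, I would choose a finite \'etale surjection $S'\to S$ over which $Y'$ becomes a disjoint union of $n$ copies of $S'$; then $V_{Y'_{S'}}^*\cong \bG_{m,S'}^n$ by (ii) together with $V_{S'}^*=\bG_{m,S'}$, so $V_{Y'}^*$ is a torus. Conversely, suppose $Y'$ is not \'etale over $S$. Since $g'$ is finite and flat, there is a geometric point $\bar s$ for which $Y'_{\bar s}$ fails to be \'etale over $\kb$, i.e. $B=\cO(Y'_{\bar s})$ is not reduced; in the notation above, some $\fm_i$ is nonzero. Then the fiber of $V_{Y'}^*$ at $\bar s$ contains the nontrivial connected unipotent subgroup $1+\fm_i$, so it is not a torus, and therefore $V_{Y'}^*$ is not a torus. $\qed$
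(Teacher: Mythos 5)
Your proof is correct, and while the overall strategy (reduce everything to the geometric fiber algebras, which are finite algebras over an algebraically closed field) matches the paper's, you carry out several steps differently. For the connectedness of fibers in (i), the paper simply cites \cite[Prop.~2.4.3]{Raynaud}, whereas you give the direct computation: $B=\prod_i B_i$ with $B_i$ local artinian, and $B_i^*$ an extension of $\bG_m$ by $1+\fm_i$, itself a successive extension of the vector groups $\fm_i^j/\fm_i^{j+1}$; this same decomposition then does double duty in your proof of (iii), where the nontrivial unipotent subgroup $1+\fm_i$ immediately rules out the torus property when some fiber is non-reduced (the paper instead defers this direction to the more general Proposition \ref{prop:car}). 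For the implication ``\'etale $\Rightarrow$ torus'' you pass to a finite \'etale Galois-type cover $S'\to S$ splitting $Y'$ into $n$ copies of $S'$ and conclude by (ii); the paper avoids constructing such a cover by base-changing to $Y'$ itself, splitting off the tautological section to get $Y'=S\sqcup Y''$, and inducting on the degree. Both mechanisms are standard and valid (yours requires the existence of a splitting cover, which holds locally on $S$ where the degree is constant, and being a torus is local on $S$); the paper's is slightly more economical, yours is more self-contained since it trades two external references for short explicit arguments. No gaps.
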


\begin{proof}
(i) Since $V_{Y'}$ is smooth, so is its open subscheme $V_{Y'}^*$.
Also, Weil restriction preserves affineness in view of (FF) and 
\cite[Chap.~I, \S 1, Prop.~6.6]{Demazure-Gabriel}; in particular,
$V_{Y'}^*$ is affine. Its fibers are connected by
\cite[Prop.~2.4.3]{Raynaud}.

(ii) is readily checked.

(iii) If $V_{Y'}^*$ is a torus, then so are its fibers
$(V_{Y'}^*)_s = \cO(Y'_s)^*$. It follows readily that
the $k(s)$-algebra $\cO(Y'_s)$ is separable
(see Proposition \ref{prop:car} below for a more general result). 
Hence $Y'$ is \'etale over $S$. To show the converse implication, 
we may replace $S$ (resp. $Y'$) with $Y'$ 
(resp. $Y'$ with $Y' \times_S Y'$). Then $g': Y' \to S$ has 
a section, so that $Y' = S \sqcup Y''$ for some scheme $Y''$, 
finite and \'etale over $S$. Thus, 
$V_{Y'}^* \cong \bG_{m,S} \times V_{Y''}^*$ and we conclude 
by induction.

\end{proof}

Next, observe that the structure map $f : X \to S$ also satisfies 
(PF): the properness has already been observed, while the flatness 
and the assertion on geometric fibers follow from 
\cite[Thm.~3.11]{Howe}. Thus, $Y$ is a rigidifier of $\Pic_{X/S}$ 
and the latter sits in an exact sequence of sheaves 
for the \'etale topology, analogous to (\ref{eqn:exa}). Moreover, 
both sequences sit in a commutative diagram
\begin{equation}\label{eqn:com}
\CD
0 @>>> \bG_{m,S} @>>> V_Y^* @>>> (\Pic_{X/S},Y) @>>> \Pic_{X/S} @>>> 0 \\
& & @V{\id}VV @V{\psi^*}VV @V{\varphi^*}VV @V{\varphi^*}VV \\
0 @>>> \bG_{m,S} @>>> V_{Y'}^* @>>> (\Pic_{X'/S},Y') @>>> \Pic_{X'/S} @>>> 0. \\
\endCD 
\end{equation}

We may now state a key observation:

\begin{lemma}\label{lem:rig}
The map $\varphi^* : (\Pic_{X/S},Y) \to (\Pic_{X'/S},Y')$ is an isomorphism.
\end{lemma}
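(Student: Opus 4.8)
The plan is to show that $\varphi^*$ identifies the two rigidified Picard functors by exploiting the explicit description of invertible sheaves on the pinched scheme $X$ given in Subsection \ref{subsec:inv}. Recall that an invertible sheaf on $X$ is the same as a triple $(\cL', s', \cM)$ with $\cL'$ on $X'$, $\cM$ on $Y$, and $s' : \psi^*(\cM) \xrightarrow{\sim} \iota'^*(\cL')$; a rigidification of $\cL$ along $Y$ is the choice of a trivialization of $\iota^*(\cL) = \cM$, i.e. of an isomorphism $\cM \cong \cO_Y$. Hence a $Y$-rigidified invertible sheaf on $X$ amounts to a pair $(\cL', s')$ with $\cL'$ on $X'$ and $s'$ an isomorphism $\psi^*(\cO_Y) = \cO_{Y'} \xrightarrow{\sim} \iota'^*(\cL')$, that is, a trivialization of $\iota'^*(\cL')$ along $Y'$ — which is exactly a $Y'$-rigidified invertible sheaf on $X'$. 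Under this bijection $\varphi^*$ is precisely the map sending $(\cL', s')$ on the $X$ side to the same pair on the $X'$ side, because $\varphi^*(\cL) = \cL'$ and the rigidification is transported through the square \eqref{eqn:pin}. This gives the isomorphism on the level of ordinary rigidified invertible sheaves over $S$ itself.

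First I would make the above precise over an arbitrary base: the conclusion must hold after any base change $S' \to S$, since $(\Pic_{X/S}, Y)$ and $(\Pic_{X'/S}, Y')$ are defined as sheaves (for the fppf, equivalently étale, topology on $S$) whose sections over $S'$ are, by definition of a rigidifier in the sense of \cite[\S 8.1]{BLR}, the isomorphism classes of rigidified invertible sheaves on $X_{S'}$, resp. $X'_{S'}$. So I would check that pinching commutes with flat base change: this is built into \cite[Thm.~3.11]{Howe} (cited already for the fiber statement in (PF) for $f$), which says $X_{S'}$ is the pinching of $X'_{S'}$ along $Y'_{S'}$ via $\psi_{S'}$, and that the description of invertible sheaves from Subsection \ref{subsec:inv} is compatible with this base change. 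Granting that, the functorial bijection $(\cL, \text{rig.}) \leftrightarrow (\cL', \text{rig.})$ constructed above is natural in $S'$, hence defines a natural isomorphism of presheaves $(\underline{\Pic}_{X/S}, Y) \to (\underline{\Pic}_{X'/S}, Y')$ before sheafification, and therefore an isomorphism of the associated sheaves $(\Pic_{X/S}, Y) \xrightarrow{\sim} (\Pic_{X'/S}, Y')$. By the commuting square \eqref{eqn:com} this isomorphism is $\varphi^*$, which is what we want.

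The key steps, in order: (1) unwind the definition of $(\Pic_{X/S}, Y)$ so that an $S'$-point is an isomorphism class of pairs $(\cL, \alpha)$ with $\cL$ invertible on $X_{S'}$ and $\alpha : \iota_{S'}^*(\cL) \xrightarrow{\sim} \cO_{Y_{S'}}$; (2) apply the equivalence of Subsection \ref{subsec:inv} (in its base-changed form) to rewrite $\cL$ as a triple $(\cL', s', \cM)$ and $\alpha$ as a trivialization of $\cM$, thereby eliminating $\cM$ and $s'$ becomes a trivialization of $\iota'^*_{S'}(\cL')$ along $Y'_{S'}$; (3) observe that what remains is exactly an $S'$-point of $(\Pic_{X'/S}, Y')$, and that the resulting bijection is functorial in $S'$ and agrees with $\varphi^*$; (4) pass to sheaves. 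Steps (1), (3), (4) are formal once (2) is in hand.

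The main obstacle is step (2): ensuring that the equivalence of categories between invertible sheaves on $X$ and gluing triples $(\cL', s', \cM)$ — stated in Subsection \ref{subsec:inv} for the absolute situation, on the basis of \cite[Thm.~2.2]{Ferrand}, \cite[Thm.~3.13]{Howe} — is genuinely compatible with arbitrary base change $S' \to S$, including non-flat ones if one wants the presheaf statement, though in fact only flat $S'$ occur for the fppf sheafification. Concretely one needs that $X_{S'}$ is again a pinching (with the same finite morphism $\psi_{S'}$), that $\varphi_{S'}$ is still affine, and that the natural map $\cO_{Y_{S'}} \to (\psi_{S'})_*(\cO_{Y'_{S'}})$ stays injective — the latter being clear from (FF) since $Y, Y'$ are finite flat over $S$ so these pushforwards are locally free and the injection stays injective after any base change, in fact splits Zariski-locally on $S$. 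Once this base-change compatibility of Ferrand's pinching data is recorded (it is essentially \cite[Thm.~3.11]{Howe} together with the affineness of $\varphi$), the rest of the argument is the bookkeeping described above, and $\varphi^*$ is an isomorphism as claimed.
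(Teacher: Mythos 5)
Your proposal is correct and follows essentially the same route as the paper: reduce to $S'$-points for an arbitrary $S$-scheme $S'$ via the base-change compatibility of pinching and of the triple description (Howe, Thms.~3.11 and 3.13), then identify a $Y$-rigidified sheaf on $X$, viewed as a triple $(\cL',s',\cM)$ with a trivialization of $\cM$, with a $Y'$-rigidified sheaf on $X'$. The paper phrases this as separate surjectivity and injectivity checks on triples rather than as a single bijection eliminating $\cM$, but the content is the same.
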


\begin{proof}
Consider an arbitrary $S$-scheme $S'$. Then the square obtained from
(\ref{eqn:pin}) by base change to $S'$ is still cocartesian in view of
\cite[Thm.~3.11]{Howe}. Thus, the invertible sheaves on 
$X_{S'}$ can be described as in Subsection \ref{subsec:inv}, 
in view of \cite[Thm.~3.13]{Howe}. So it suffices to show that
\[ \varphi^* : (\Pic(X),Y) \to (\Pic(X'),Y') \]
is an isomorphism. Here $(\Pic(X),Y)$ denotes the group of isomorphism
classes of pairs $(\cL,\alpha)$, where $\cL$ is an invertible sheaf
on $X$, and $\alpha : \cO_Y \to \iota^*(\cL)$ is an isomorphism.

Let $(\cL',\alpha')$ be an invertible sheaf on $X'$, rigidified
along $Y'$. Then the triple $(\cL',\alpha',\cO_Y)$ corresponds by
Subsection \ref{subsec:inv} to an invertible sheaf $\cL$ on $X$ 
such that $\varphi^*(\cL) = \cL'$ and $\iota^*(\cL) = \cO_Y$. 
Moreover, $\varphi^*(\cL, 1) \cong (\cL',\alpha')$. Thus, 
$\varphi^*$ is surjective.

Next, let $(\cL,\alpha)$ be an invertible sheaf on $X$ rigidified
along $Y$, such that $\varphi^*(\cL,\alpha)$ is trivial in
$(\Pic(X'),Y')$. In particular, $\varphi^*(\cL) \cong \cO_{X'}$ and
$\iota^*(\cL) \cong \cO_Y$. Thus, $\cL$ is isomorphic to the
invertible sheaf associated with a triple
$(\cO_{X'}, s',\cO_Y)$, where $s' \in \cO(Y')^*$. Then 
$\alpha \in \cO(Y)^*$; moreover, replacing $(\cO_{X'},s',\cO_Y)$
with the isomorphic triple $(\cO_{X'}, s' \psi^*(v), \cO_Y)$ for 
$v \in \cO(Y)^*$ replaces $\alpha$ with $\alpha v$.  
Thus, $(\cL,\alpha)$ is isomorphic to $(\cO_X,1)$, and $\varphi^*$
is injective.
\end{proof}

Lemma \ref{lem:rig} and the commutative diagram (\ref{eqn:com}) 
yield readily the following:

\begin{corollary}\label{cor:long}
We have an exact sequence of sheaves for the \'etale topology
\[ \CD
0 @>>> V_Y^* @>{\psi^*}>> V_{Y'}^* @>>> \Pic_{X/S}
@>{\varphi^*}>> \Pic_{X'/S} @>>> 0.
\endCD \]
\end{corollary}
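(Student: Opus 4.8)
The plan is to deduce Corollary~\ref{cor:long} by a diagram chase in the commutative diagram (\ref{eqn:com}), combining Lemma~\ref{lem:rig} with the exactness of the two horizontal rows. First I would split each four-term exact row of (\ref{eqn:com}) into two short exact sequences: for the bottom row, write $0 \to \bG_{m,S} \to V_{Y'}^* \to \cE' \to 0$ and $0 \to \cE' \to (\Pic_{X'/S},Y') \to \Pic_{X'/S} \to 0$, where $\cE'$ denotes the image sheaf of $V_{Y'}^* \to (\Pic_{X'/S},Y')$ (equivalently the cokernel of $\bG_{m,S} \to V_{Y'}^*$, formed in the category of \'etale sheaves); and similarly $\cE$ for the top row. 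The vertical maps of (\ref{eqn:com}) then induce a map $\cE \to \cE'$ fitting into a morphism of short exact sequences, and one checks this is the obvious one coming from $\psi^*$ and $\varphi^*$.

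Next I would observe that since $\varphi^* : (\Pic_{X/S},Y) \to (\Pic_{X'/S},Y')$ is an isomorphism by Lemma~\ref{lem:rig}, comparing the two sequences $0 \to \cE \to (\Pic_{X/S},Y) \to \Pic_{X/S} \to 0$ and $0 \to \cE' \to (\Pic_{X'/S},Y') \to \Pic_{X'/S} \to 0$ via the five lemma (or directly) shows that the square
\[
\CD
\cE @>>> (\Pic_{X/S},Y) \\
@VVV @V{\varphi^*}V{\cong}V \\
\cE' @>>> (\Pic_{X'/S},Y')
\endCD
\]
is cartesian, and hence that $\cE \to \cE'$ is injective with cokernel $\Pic_{X'/S}/\varphi^*(\Pic_{X/S})$; in particular the sequence $0 \to \cE \to \cE' \to \Pic_{X'/S} \to \Pic_{X/S}$ would not yet be what we want, so instead I would use that $\varphi^*$ being an isomorphism gives a short exact sequence relating $\cE$, $\cE'$, and $\Pic_{X/S}$. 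Concretely: from the morphism of the two length-three sequences, with the middle vertical an isomorphism, the snake lemma yields an exact sequence $0 \to \Kern(\cE \to \cE') \to 0 \to 0 \to \Coker(\cE \to \cE') \to \Coker(\Pic_{X/S} \to \Pic_{X'/S}) \to 0$ after shifting the roles correctly; the cleanest route is to note $\cE' / \cE \cong \Pic_{X'/S}/\Pic_{X/S}$ as sheaves, so that we obtain the four-term exact sequence $0 \to \cE \to \cE' \to \Pic_{X/S} \to \Pic_{X'/S} \to 0$ once we splice in the defining sequence of $\cE$ over $\Pic_{X/S}$.

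Finally I would splice this with the bottom-left short exact sequence $0 \to \bG_{m,S} \to V_{Y'}^* \to \cE' \to 0$ and with the top one $0 \to \bG_{m,S} \to V_Y^* \xrightarrow{} \cE \to 0$: since the leftmost vertical map in (\ref{eqn:com}) is $\id_{\bG_{m,S}}$ and it is compatible with $\psi^* : V_Y^* \to V_{Y'}^*$, the composite $V_Y^* \to \cE \hookrightarrow \cE' $ has the same kernel $\bG_{m,S}$ as $V_{Y'}^* \to \cE'$, so $\psi^* : V_Y^* \to V_{Y'}^*$ is injective and the induced map on cokernels-by-$\bG_{m,S}$ is exactly $\cE \hookrightarrow \cE'$. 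Putting $V_Y^* \hookrightarrow V_{Y'}^* \twoheadrightarrow \cE' \hookrightarrow \Pic_{X/S}$ together — using $\cE' \hookrightarrow \Pic_{X/S}$ from the previous step and $V_{Y'}^* \twoheadrightarrow \cE'$ with kernel $\bG_{m,S} = \Ima(V_Y^*)$ — gives exactness at $V_{Y'}^*$ and at $\Pic_{X/S}$, and exactness at $V_Y^*$ and at $\Pic_{X'/S}$ is inherited. The main obstacle is purely bookkeeping: keeping track of which cokernels are taken in the category of \'etale sheaves (so that the rows really are exact and the snake/five lemmas apply) and making sure the identification of the spliced maps with $\psi^*$ and $\varphi^*$ is the stated one; there is no geometric input beyond Lemma~\ref{lem:rig} and the already-established diagram (\ref{eqn:com}), so I expect this to be a short formal argument.
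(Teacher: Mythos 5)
Your overall route is the one the paper intends: the paper's ``proof'' of this corollary is literally that it ``follows readily'' from Lemma~\ref{lem:rig} and the diagram (\ref{eqn:com}), and making that precise by splitting each four-term row into short exact sequences of \'etale sheaves and chasing is exactly right. However, the middle step of your write-up contains two genuine errors, and as written it derives the correct conclusion from incorrect intermediate statements. First, the square
\[ \CD
\cE @>>> (\Pic_{X/S},Y) \\
@VVV @VV{\varphi^*}V \\
\cE' @>>> (\Pic_{X'/S},Y') \\
\endCD \]
is \emph{not} cartesian: the fibre product is the preimage of $\cE'$ under the isomorphism $\varphi^*$, i.e.\ $\Kern\bigl((\Pic_{X/S},Y) \to \Pic_{X'/S}\bigr)$, which strictly contains $\cE = \Kern\bigl((\Pic_{X/S},Y) \to \Pic_{X/S}\bigr)$ whenever $\varphi^* : \Pic_{X/S} \to \Pic_{X'/S}$ has nontrivial kernel. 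Indeed, if the square were cartesian with the right vertical an isomorphism, you would conclude $\cE \cong \cE'$, i.e.\ that the affine part $V_{Y'}^*/\psi^*(V_Y^*)$ vanishes --- the opposite of what the construction is designed to produce.

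Second, your snake lemma is written backwards. For the morphism of short exact sequences $(\cE, M, P) \to (\cE', M', P')$ with $M \to M'$ an isomorphism, the snake lemma gives
\[ 0 \to \Kern(\cE \to \cE') \to 0 \to \Kern(P \to P') \to \Coker(\cE \to \cE') \to 0 \to \Coker(P \to P') \to 0, \]
so the correct conclusions are: $\cE \to \cE'$ is injective, $\cE'/\cE \cong \Kern(\varphi^* : \Pic_{X/S} \to \Pic_{X'/S})$ via the connecting map, and $\varphi^* : \Pic_{X/S} \to \Pic_{X'/S}$ is \emph{surjective}. Your statements ``cokernel $\Pic_{X'/S}/\varphi^*(\Pic_{X/S})$'' and ``$\cE'/\cE \cong \Pic_{X'/S}/\Pic_{X/S}$'' interchange kernel and cokernel, and the four-term sequence does not follow from them. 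Relatedly, $\cE' \to \Pic_{X/S}$ is not injective (its kernel is $\cE$), so the chain should read $V_{Y'}^* \twoheadrightarrow \cE'/\cE \hookrightarrow \Pic_{X/S}$; equivalently, $\Kern(V_{Y'}^* \to \Pic_{X/S})$ is the preimage of $\cE$ in $V_{Y'}^*$, which equals $\psi^*(V_Y^*)$ because the leftmost vertical map of (\ref{eqn:com}) is the identity on $\bG_{m,S}$. With these corrections your final splicing paragraph goes through and the argument is complete.
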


\subsection{Their Picard scheme}
\label{subsec:rep}

We keep the assumptions (PF) and (FF) of Subsection \ref{subsec:pic},
and assume in addition that $X'$ is locally projective over $S$.

\begin{proposition}\label{prop:rep}

{\rm (i)} $X$ is locally projective over $S$ as well.

\smallskip

\noindent
{\rm (ii)} The Picard functors $\Pic_{X'/S}$, $\Pic_{X/S}$ are 
represented by group schemes $\bPic_{X'/S}$, $\bPic_{X/S}$ 
which are locally of finite type.

\smallskip

\noindent
{\rm (iii)} Assume in addition the following condition:

\smallskip

\noindent
{\rm (R)} \quad The homomorphism of group schemes 
$\psi^*: V_Y^* \to V_{Y'}^*$ is a closed immersion 
and its cokernel is represented by a group scheme.

\smallskip

\noindent
Then the latter group scheme sits in an exact sequence
\begin{equation}\label{eqn:rep}
\CD
0 @>>> V_{Y'}^*/\psi^*(V_Y^*) @>>> \bPic_{X/S} @>{\varphi^*}>>
\bPic_{X'/S} @>>> 0.
\endCD
\end{equation}
\end{proposition}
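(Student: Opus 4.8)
The proof splits into three pieces, following the structure of the statement. For (i), the claim is that pinching preserves local projectivity under the extra hypothesis that $X'$ is locally projective over $S$; working Zariski-locally on $S$ (which is legitimate since the formation of $X$ is local on $S$), I may assume $X'$ is projective over $S$ and $S$ affine, so that (AF) holds. The idea is to produce a relatively ample invertible sheaf on $X$ by descending one from $X'$. Since $Y'$ is finite over $S$ and $S$ is affine, $\iota'^*(\cL')$ is trivial for a suitable power $\cL'^{\otimes n}$ of a relatively ample sheaf on $X'$ (any invertible sheaf on a scheme finite over an affine base becomes trivial after twisting, or more simply one can arrange $\iota'^*(\cL') \cong \psi^*(\cM)$ by choosing the rigidification); by the description of invertible sheaves on $X$ in Subsection \ref{subsec:inv}, such a sheaf $\cL'$ together with an isomorphism $s'$ descends to an invertible sheaf $\cL$ on $X$ with $\varphi^*(\cL) \cong \cL'$. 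Then $\cL$ is relatively ample on $X$: since $\varphi$ is finite and surjective, ampleness of $\varphi^*(\cL)$ over $S$ forces ampleness of $\cL$ over $S$ (this is the standard descent of ampleness along finite surjective morphisms, e.g.\ via EGA II or the Nakai–Moishezon-type criterion). Hence $X$ is locally projective over $S$.

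\textbf{Part (ii).} Given (i), $X'$ and $X$ are both locally projective over $S$, and by the remark after (PF) in Subsection \ref{subsec:pic} they are proper and flat over $S$ with integral geometric fibers; in particular $f'_*\cO_{X'} = \cO_S$ and $f_*\cO_X = \cO_S$ hold universally. The representability of $\Pic_{X'/S}$ and $\Pic_{X/S}$ by group schemes locally of finite type over $S$ is then exactly Grothendieck's theorem (FGA, or \cite[\S 8.2, Thm.~1]{BLR}): for a proper flat morphism which is locally projective, cohomologically flat in degree zero, with geometrically integral fibers, the relative Picard functor is representable by a separated scheme locally of finite type over $S$. Since the geometric fibers are integral, cohomological flatness in degree zero is automatic (the fibers satisfy $H^0(\cO) = k$). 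So (ii) follows by citing this representability theorem for each of $X'$ and $X$.

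\textbf{Part (iii).} Now assume (R). From Corollary \ref{cor:long} we have an exact sequence of étale sheaves
\[
0 \longrightarrow V_Y^* \stackrel{\psi^*}{\longrightarrow} V_{Y'}^*
\longrightarrow \Pic_{X/S} \stackrel{\varphi^*}{\longrightarrow} \Pic_{X'/S}
\longrightarrow 0.
\]
Breaking this into two short exact sequences via the image sheaf $Q := V_{Y'}^*/\psi^*(V_Y^*)$, we get $0 \to Q \to \Pic_{X/S} \to \Pic_{X'/S} \to 0$ and, by hypothesis (R), $Q$ is represented by a group scheme, which moreover (being the cokernel of a closed immersion of smooth affine $S$-group schemes) is a smooth affine $S$-group scheme. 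By (ii), $\Pic_{X/S}$ and $\Pic_{X'/S}$ are represented by group schemes $\bPic_{X/S}$, $\bPic_{X'/S}$, locally of finite type over $S$. It remains to upgrade the exact sequence of étale sheaves to an exact sequence of the representing group schemes \eqref{eqn:rep}. The map $Q \to \bPic_{X/S}$ is a monomorphism of group schemes; since $Q$ is affine of finite type and the target is separated and locally of finite type, and since on geometric fibers it is a closed immersion (a monomorphism of affine algebraic groups, the fiber of $Q$ being affine, into a group scheme locally of finite type, is a closed immersion), one concludes $Q \to \bPic_{X/S}$ is a closed immersion. Then $\varphi^* : \bPic_{X/S} \to \bPic_{X'/S}$ is faithfully flat with kernel $Q$: surjectivity as an étale sheaf map of schemes locally of finite type gives an fppf — in fact smooth, since $Q$ is smooth — surjection, and the sequence \eqref{eqn:rep} is exact as a sequence of fppf sheaves, hence of group schemes.

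\textbf{Main obstacle.} The delicate point is part (i): descending a relatively ample sheaf through the pinching, specifically checking that the triple $(\cL', s', \cO_Y)$ can be chosen with $\cL'$ relatively ample on $X'$ (which requires matching up the rigidification along $Y'$ with a pullback $\psi^*(\cM)$, using that $Y'$ is finite over an affine base so Picard groups of the relevant finite schemes are manageable), and then invoking descent of ampleness along the finite surjective morphism $\varphi$. The remaining work in (ii) and (iii) is bookkeeping: (ii) is a citation of Grothendieck representability once local projectivity is in hand, and (iii) is the routine passage from an exact sequence of étale (equivalently fppf) sheaves to the corresponding exact sequence of representing group schemes, using smoothness and affineness of the kernel $Q$ supplied by hypothesis (R) and Lemma \ref{lem:units}(i).
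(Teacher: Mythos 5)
Your proposal follows the paper's proof closely in all three parts: (i) descend an $S$-ample sheaf from $X'$ to $X$ via the triple description of Subsection \ref{subsec:inv} and then use that ampleness descends along the finite surjective morphism $\varphi$; (ii) cite \cite[8.2 Thm.~1]{BLR}; (iii) read off the sequence (\ref{eqn:rep}) from Corollary \ref{cor:long} once all three terms are known to be representable. The one point to correct is your justification in (i) that ``any invertible sheaf on a scheme finite over an affine base becomes trivial after twisting'': this is false --- a finite $S$-scheme is $\Spec(B)$ for a finite $\cO(S)$-algebra $B$, and $\Pic(B)$ can contain elements of infinite order, so no power of $\iota'^*(\cL')$ need be trivial. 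Since the statement is local projectivity, the correct (and the paper's) fix is simply to shrink $S$ further: because $Y' \to S$ is finite and flat, any invertible sheaf on $Y'$ is trivial on the preimage of a suitable open cover of $S$ (choose a section non-vanishing on a fiber and discard the image of its zero locus), so after replacing $S$ by a member of that cover one may assume $\iota'^*(\cL') \cong \cO_{Y'}$ outright and then descend $\cL'$ itself. Your parenthetical alternative of ``choosing the rigidification'' does not by itself produce the required isomorphism $\iota'^*(\cL') \cong \psi^*(\cM)$. The rest is fine; in (iii) your extra verifications (that $Q \to \bPic_{X/S}$ is a closed immersion and $\varphi^*$ is faithfully flat) are harmless but not needed, since exactness of (\ref{eqn:rep}) as a sequence of group schemes just means exactness of the associated fppf (or \'etale) sheaves, which Corollary \ref{cor:long} already provides.
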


\begin{proof}
(i) We may assume that $X'$ has an $S$-ample invertible sheaf $\cL'$.
In view of (FF), $\iota'^*(\cL')$ is trivial on the pull-back of 
some open affine covering of $S$. Thus, we may further assume that 
$\iota'^*(\cL') \cong \cO_{Y'}$; then by Subsection \ref{subsec:inv}, 
$\cL' \cong \varphi^*(\cL)$ for some invertible sheaf $\cL$ on $X$.
Since $\varphi$ is finite, $\cL$ is $S$-ample.

(ii) The assertion on $\Pic_{X'/S}$ is a consequence of (PF) and 
the local projectivity assumption in view of \cite[8.2 Thm.~1]{BLR} 
(see also \cite[Thm.~9.4.8]{Kleiman}). The assertion on $\Pic_{X/S}$ 
follows similarly in view of (i).

(iii) is a direct consequence of Corollary \ref{cor:long}.
\end{proof}

\begin{remark}\label{rem:rep}
The assumption (R) is satisfied when $S = \Spec(k)$ for a field
$k$, see the next remark. This assumption also holds when $\psi$ 
admits a section $\sigma$ (in view of the exact sequence 
$V_Y^* \stackrel{\psi^*}{\longrightarrow} V_{Y'}^*
\stackrel{\sigma^*}{\longrightarrow} V_Y^*$) or when $Y$ is
\'etale over $S$ (then $V_Y^*$ is a torus and the assertion
follows from \cite[Exp.~IX, Cor.~2.5]{SGA3}). 
\end{remark}

\begin{remark}\label{rem:field}
Consider the case where $S= \Spec(k)$, where $k$ is a field. 
Then the assumptions (PF), (FF) and of local projectivity 
just mean that $X'$ is a projective $k$-variety equipped 
with a finite subscheme $Y'$ and with a morphism 
$\psi : Y' \to Y$ such that 
$\cO_Y \hookrightarrow \psi_*(\cO_{Y'})$.
(By a variety, we mean a geometrically integral scheme.) 
Moreover, the group scheme $V_Y^*$ represents the functor
$R \mapsto (R \otimes_k A)^*$ from $k$-algebras to groups, 
where $A := \cO(Y)$ is an artinian $k$-algebra.
We shall rather denote $V_Y^*$ by $\mu^A$, as in 
\cite[Chap.~II, \S 1, 2.3]{Demazure-Gabriel}; then
$\mu^A$ is a connected affine algebraic group with Lie algebra
the vector space $A$ equipped with the trivial bracket. This
group is also considered in \cite{Russell}, where it is denoted 
by $\bL_A$.

Let $A' := \cO(Y')$; then the injective homomorphism of algebras 
$A \to A'$ induces a homomorphism of algebraic groups
$\psi^*: \mu^A \to \mu^{A'}$ which is a closed immersion 
(since $\psi^*$ is injective on points over the algebraic closure 
of $k$, and on Lie algebras). Thus, the cokernel of $\psi^*$
is represented by a connected affine algebraic group, that we
denote by $\mu^{A'/A}$. So the condition (R) is satisfied, and
(\ref{eqn:rep}) yields an exact sequence
\[ \CD
0 @>>>  \mu^{A'/A} @>>> \bPic_{X/k} @>{\varphi^*}>>
\bPic_{X'/k} @>>> 0.
\endCD \]
The analogous sequence for Picard groups is well-known (see e.g. 
\cite[Prop.~21.8.5]{EGAIV}).

If $X'$ is geometrically normal, then $\bPic^0_{X'/k}$ is projective
by \cite[Thm.~9.5.4]{Kleiman}. Thus, $\mu^{A'/A}$ is the affine
part of the Picard variety of $X$, if in addition $k$ is perfect.

Finally, all the above results extend without change to the case 
where $X'$ is a proper variety satisfying (AF). Indeed, 
the Picard functor $\Pic_{X'/k}$ is still represented by a scheme 
locally of finite type, in view of \cite[Cor.~9.4.18.3]{Kleiman}.
\end{remark}

Returning to the notation and assumptions of Proposition 
\ref{prop:rep}, assume that $Y$ is the disjoint union of two closed
subschemes $Y_1, Y_2$. Then we also have $Y' = Y'_1 \sqcup Y'_2$,
where $Y'_i := \psi^{-1}(Y_i)$ for $i = 1,2$. We may pinch $X'$ along 
the restriction $\psi_1: Y'_1 \to Y_1$ to obtain a scheme $X_1$
satisfying all the assumptions of Subsection \ref{subsec:pic}.
Moreover, the induced morphism $Y'_2 \to X_1$ is a closed immersion
(since $Y'_2 \subset X' \setminus  Y'_1$), and $X$ is obtained
by pinching $X_1$ along the restriction $\psi_2: Y'_2 \to Y_2$.
Likewise, $X$ is obtained by pinching $X_2$ along 
$\psi_1: Y'_1 \to Y_1$; this yields a commutative diagram
\[ \CD
X' @>{\varphi'_1}>> X_1 \\
@V{\varphi'_2}VV @V{\varphi_1}VV \\
X_2 @>{\varphi_2}>> X. \\
\endCD \]

\begin{lemma}\label{lem:fib}
With the above notation and assumptions, the map
\[ \varphi_1 \times \varphi_2: \bPic_{X/S} \longrightarrow
\bPic_{X_1/S}\times_{\bPic_{X'/S}} \bPic_{X_2/S} \]
is an isomorphism.
\end{lemma}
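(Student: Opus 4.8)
The plan is to exploit the exact sequences produced by Proposition~\ref{prop:rep} for each of the three pinchings involved. Applying that proposition to the pinching of $X'$ along $\psi_i : Y'_i \to Y_i$ gives, for $i = 1,2$, an exact sequence
\[
0 \longrightarrow V_{Y'_i}^*/\psi_i^*(V_{Y_i}^*) \longrightarrow \bPic_{X_i/S}
\stackrel{\varphi_i'^*}{\longrightarrow} \bPic_{X'/S} \longrightarrow 0,
\]
and applying it to the full pinching $X' \to X$ along $\psi : Y' \to Y$ gives
\[
0 \longrightarrow V_{Y'}^*/\psi^*(V_Y^*) \longrightarrow \bPic_{X/S}
\stackrel{\varphi^*}{\longrightarrow} \bPic_{X'/S} \longrightarrow 0,
\]
where $\varphi^* = \varphi_i'^* \circ \varphi_i^*$ by the commutativity of the last displayed square. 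By Lemma~\ref{lem:units}(ii), the decompositions $Y = Y_1 \sqcup Y_2$ and $Y' = Y'_1 \sqcup Y'_2$ give $V_Y^* \cong V_{Y_1}^* \times_S V_{Y_2}^*$ and $V_{Y'}^* \cong V_{Y'_1}^* \times_S V_{Y'_2}^*$, compatibly with $\psi^*$; hence there is a natural isomorphism of $S$-group schemes $V_{Y'}^*/\psi^*(V_Y^*) \cong \bigl(V_{Y'_1}^*/\psi_1^*(V_{Y_1}^*)\bigr) \times_S \bigl(V_{Y'_2}^*/\psi_2^*(V_{Y_2}^*)\bigr)$, the "kernel" terms splitting as a product according to the two pieces.

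The core of the argument is then a diagram chase. I would form the fiber product $P := \bPic_{X_1/S} \times_{\bPic_{X'/S}} \bPic_{X_2/S}$ and observe that the natural map $P \to \bPic_{X'/S}$ (via either projection) is surjective with kernel $\bigl(V_{Y'_1}^*/\psi_1^*(V_{Y_1}^*)\bigr) \times_S \bigl(V_{Y'_2}^*/\psi_2^*(V_{Y_2}^*)\bigr)$: indeed a point of $P$ is a pair $(L_1, L_2)$ with $\varphi_1'^*(L_1) = \varphi_2'^*(L_2)$, and it lies in the kernel of $P \to \bPic_{X'/S}$ iff each $L_i$ lies in $\ker(\varphi_i'^*)$, the two being independent. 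Thus $P$ sits in an exact sequence
\[
0 \longrightarrow \bigl(V_{Y'_1}^*/\psi_1^*(V_{Y_1}^*)\bigr) \times_S \bigl(V_{Y'_2}^*/\psi_2^*(V_{Y_2}^*)\bigr) \longrightarrow P \longrightarrow \bPic_{X'/S} \longrightarrow 0.
\]
The map $\varphi_1 \times \varphi_2 : \bPic_{X/S} \to P$ is a morphism of short exact sequences over the identity on $\bPic_{X'/S}$, and on the kernel subgroups it is precisely the identification of $V_{Y'}^*/\psi^*(V_Y^*)$ with the product described above. By the five lemma (for fppf sheaves, or equivalently by a direct chase on $S'$-points after fppf sheafification), $\varphi_1 \times \varphi_2$ is an isomorphism.

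The main point requiring care is the compatibility of all the identifications: one must check that the splitting of the kernel term $V_{Y'}^*/\psi^*(V_Y^*)$ as a product is the one induced by the two projections $\bPic_{X/S} \to \bPic_{X_i/S}$, i.e., that the square
\[
\CD
\bPic_{X/S} @>{\varphi_i}>> \bPic_{X_i/S} \\
@V{\varphi^*}VV @V{\varphi_i'^*}VV \\
\bPic_{X'/S} @= \bPic_{X'/S}
\endCD
\]
commutes and restricts correctly on kernels. This follows from the commutativity of the square $X' \to X_i \to X$ versus $X' \to X$ established just before the lemma, combined with the explicit description of invertible sheaves on a pinched scheme from Subsection~\ref{subsec:inv}: a triple $(\cO_{X'}, s', \cO_Y)$ with $s' \in \cO(Y')^* = \cO(Y'_1)^* \times \cO(Y'_2)^*$ pulls back to $X_i$ to the triple built from the $i$-th component of $s'$, the other component being absorbed because $Y'_{3-i}$ is already a closed subscheme of $X_i$ on which $\cO_{X_i}$ restricts trivially. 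Granting this bookkeeping, the isomorphism is immediate.
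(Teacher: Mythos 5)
Your proof is correct and follows essentially the same route as the paper, which simply notes that the lemma "follows easily from the exact sequence (\ref{eqn:rep}) and from the analogous exact sequences for $\varphi_1$ and $\varphi_2$"; you have supplied the diagram chase (splitting of the kernel term via Lemma \ref{lem:units}(ii), identification of the kernel of $P \to \bPic_{X'/S}$, five lemma) that the paper leaves implicit. No further comment is needed.
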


This result follows easily from the exact sequence (\ref{eqn:rep}) 
and from the analogous exact sequences for $\varphi_1$ and $\varphi_2$.
It will be used in the proof of Theorem \ref{thm:first}, to reduce
to the case where $U$ or $T$ is trivial.

\section{Some extensions of abelian schemes}
\label{sec:ext}

\subsection{Extensions by vector groups}
\label{subsec:vect}

Throughout this section, we keep the standing assumptions of
Section \ref{sec:ppf} on schemes. All group schemes are assumed 
to be commutative.

Let $A$ be an abelian scheme over $S$. By
\cite[Thm.~1.9]{Faltings-Chai}, $A$ has a dual abelian scheme $\wA$,
and both satisfy (LAF). Also, recall that $\wA$ is locally projective
if so is $A$ (see e.g. \cite[Rem.~9.5.24]{Kleiman}). 

Consider a locally free sheaf $\cQ$ of finite rank 
over $S$ and denote by $V = V(\cQ)$ its total space, i.e., the affine 
$S$-scheme associated with the sheaf of $\cO_S$-algebras 
$\Sym_{\cO_S}(\cQ)$. Then $V$ is a vector group over $S$, i.e., 
a group scheme locally isomorphic to a direct product of copies 
of $\bG_{a,S}$ and equipped with an action of $\bG_{m,S}$ which 
restricts to the multiplication on each $\bG_{a,S}$. For example,
if $Y$ is a finite faithfully flat $S$-scheme, then 
$V_Y = V(g_*(\cO_Y))$ with the notation of Subsection 
\ref{subsec:pic}.

By \cite[Chap.~I, (1.9)]{Mazur-Messing}, any extension of 
$S$-group schemes 
\begin{equation}\label{eqn:vect}
0 \longrightarrow V \longrightarrow G \longrightarrow A 
\longrightarrow 0
\end{equation}
is classified by a morphism of $S$-group schemes
\[ \gamma : V(\omega_{\wA}^{\vee}) \longrightarrow V, \]
where $\omega_{\wA}$ denotes the sheaf of (relative) differential 
$1$-forms on $\wA$, and $\omega_{\wA}^{\vee}$ its dual.
(Note that the convention of \cite{Mazur-Messing} for vector groups
is dual to ours). When we take into account the structure of
vector group of $V$ (or equivalently, the $\bG_{m,S}$-action on 
that group scheme), the morphism $\gamma$ is in addition
$\bG_{m,S}$-equivariant, i.e., it comes from a morphism of 
locally free sheaves $\cQ \to \omega_{\wA}^{\vee}$. For simplicity,
we still denote the dual morphism by 
\[ \gamma : \omega_{\wA} \longrightarrow \cQ^{\vee}. \] 
Let $I_S(\cQ^{\vee})$ denote the affine $S$-scheme 
associated with the sheaf of $\cO_S$-algebras 
$\cO_S \oplus \varepsilon \cQ^{\vee}$, where $\varepsilon^2 = 0$,
and define similary $I_S(\omega_{\wA})$. 
Then the above morphism $\gamma$ yields a morphism of schemes
\[ I_S(\gamma) : I_S(\cQ^{\vee}) \longrightarrow I_S(\omega_{\wA}). \]
Also, $I_S(\omega_{\wA})$ may be viewed as a closed subscheme of
$\wA$, namely, the first infinitesimal neighborhood of the zero
section. Thus, $I_S(\gamma)$ may be identified with a morphism
$I_S(\cQ^{\vee}) \to \wA$ with image supported in the zero section.
We also have a closed immersion 
$I_S(\cQ^{\vee}) \to V(\cQ^{\vee})$ 
with image the first infinitesimal neighborhood of the zero section. 
Viewing $V(\cQ^{\vee})$ as an open subscheme of the projective space 
$\bP(\cQ^{\vee} \oplus \cO_S) := 
\Proj \, \Sym_{\cO_S}(\cQ^{\vee} \oplus \cO_S)$,
we obtain a closed immersion
\[ \iota' : Y'  := I_S(\cQ^{\vee}) \longrightarrow 
\wA \times_S \bP(\cQ^{\vee} \oplus \cO_S) =: X'. \]
Let $\psi: Y' \to S =: Y$ denote the structure map. Then
all the assumptions of Subsection \ref{subsec:pic}
are satisfied, and hence we may form the pinching diagram
(\ref{eqn:pin}). Moreover, the Picard functors $\Pic_{X/S}$, 
$\Pic_{X'/S}$ are represented by group schemes $\bPic_{X/S}$, 
$\bPic_{X'/S}$ in view of Proposition \ref{prop:rep}.

\begin{proposition}\label{prop:vect}
With the above notation and assumptions, the connected component
of the zero section, $\bPic_{X/S}^0$, exists and is isomorphic to $G$.
If $A$ is locally projective, then so is $X$.
\end{proposition}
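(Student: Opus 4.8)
The plan is to compute $\bPic^0_{X/S}$ using the exact sequence (\ref{eqn:rep}) of Proposition \ref{prop:rep}, which here reads
\[ 0 \longrightarrow V_{Y'}^*/\psi^*(V_Y^*) \longrightarrow \bPic_{X/S} \stackrel{\varphi^*}{\longrightarrow} \bPic_{X'/S} \longrightarrow 0, \]
and to identify each of the three terms. First I would handle $\bPic_{X'/S}$. Since $X' = \wA \times_S \bP(\cQ^\vee \oplus \cO_S)$ and the projective bundle has relatively trivial (or rather, $\bZ$-constant) Picard functor with trivial neutral component, one gets $\bPic^0_{X'/S} \cong \bPic^0_{\wA/S} \cong A$ by biduality of abelian schemes; the full $\bPic_{X'/S}$ differs from this by the locally constant sheaf $\bZ$ coming from $\cO(1)$ on the fibers, so $\bPic^0_{X'/S} \cong A$. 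Next, $V_Y^* = V_S^* = \bG_{m,S}$ since $Y = S$, while $V_{Y'}^*$ is the unit group scheme of $\cO_S \oplus \varepsilon\cQ^\vee$; the quotient $V_{Y'}^*/\psi^*(\bG_{m,S})$ is therefore the vector group $V(\cQ^\vee)^\vee = V(\cQ) = V$, because the unipotent part of $V_{Y'}^*$ is exactly $1 + \varepsilon\cQ^\vee \cong V(\cQ^\vee)$ as a group scheme (addition in the square-zero ideal), and its $\bG_m$-equivariant structure matches that of the vector group $V$. So (\ref{eqn:rep}) exhibits $\bPic^0_{X/S}$ as an extension $0 \to V \to \bPic^0_{X/S} \to A \to 0$.

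The key step — and the main obstacle — is to check that this extension is \emph{the} extension (\ref{eqn:vect}) classified by $\gamma$, i.e. that the class of the row (\ref{eqn:rep}) in $\Ext^1_S(A,V)$ equals the one given by the morphism $\gamma: V(\omega_{\wA}^\vee) \to V$ under the isomorphism of \cite[Chap.~I, (1.9)]{Mazur-Messing}. Here is where the geometric input enters. I would trace through the construction: the rigidified Picard functor $(\Pic_{X'/S}, Y')$ of $X' = \wA \times \bP(\cQ^\vee \oplus \cO_S)$ rigidified along $Y' = I_S(\cQ^\vee)$, which via the biduality isomorphism and the theory of \cite{Mazur-Messing} is governed by the restriction map from sections near the zero section, i.e. by the natural pairing between $\cQ^\vee$ and the cotangent space $\omega_{\wA}$ at the origin. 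Concretely, the connecting map $V_{Y'}^* \to \bPic_{X/S}$ in Corollary \ref{cor:long}, composed with $\varphi^*$ to $\bPic_{X'/S} = \bPic_{\wA/S}^{\,\bZ} $, encodes exactly how a unit $1 + \varepsilon q^\vee$ on the infinitesimal neighborhood deforms the trivial sheaf; by construction of the embedding $\iota' : I_S(\cQ^\vee) \hookrightarrow \wA \times \bP(\cdots)$ through the first infinitesimal neighborhood $I_S(\omega_{\wA}) \subset \wA$ twisted by $I_S(\gamma)$, this deformation direction is precisely $\gamma(q^\vee) \in \cQ$. Making this identification precise — matching the Mazur–Messing classification of (\ref{eqn:vect}) with the boundary map of Corollary \ref{cor:long} — is the technical heart; I expect to do it by base-changing to an affine $S$ where everything trivializes, unwinding both sides as explicit $1$-cocycles, and invoking the compatibility of the biduality isomorphism $\wA^\vee \cong A$ with passage to first infinitesimal neighborhoods (so that $\Lie \bPic_{\wA/S} = H^1(\wA,\cO) = \omega_{\wA}^\vee$ matches up correctly).

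Once the extension class is identified, $\bPic^0_{X/S} \cong G$ follows. For the existence of $\bPic^0_{X/S}$: since $\bPic_{X/S}$ is locally of finite type over $S$ by Proposition \ref{prop:rep}(ii) and its fibers are connected-by-finite in a controlled way (extension of $A_s$ by $V_s$, hence connected), the neutral component exists as an open subgroup scheme; more directly, $\bPic^0_{X/S}$ is identified with the preimage of $\bPic^0_{X'/S} = A$ under $\varphi^*$, which exists because $V_{Y'}^*/\psi^*(V_Y^*)$ is already connected with connected fibers by Lemma \ref{lem:units}(i). Finally, the local projectivity of $X$ when $A$ is locally projective: then $\wA$ is locally projective (as recalled above), hence so is $X' = \wA \times_S \bP(\cQ^\vee \oplus \cO_S)$, and then $X$ is locally projective by Proposition \ref{prop:rep}(i). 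This completes the proof.
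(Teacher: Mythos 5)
Your reduction to an exact sequence $0 \to V \to \bPic^0_{X/S} \to A \to 0$ is correct and matches the paper's opening moves: with $Y=S$ one has $\Pic_{X/S}\cong(\Pic_{X'/S},Y')$, the quotient $V_{Y'}^*/\psi^*(\bG_{m,S})$ is the vector group $V$, and $\bPic^0_{X'/S}\cong A$ via the projection $\pi:X'\to\wA$ together with \cite[Prop.~9.5.20]{Kleiman}; the local projectivity argument at the end is also the intended one. But the step you yourself flag as ``the technical heart'' --- that the class of this extension in $\Ext^1(A,V)$ is the one classified by $\gamma$ --- is precisely the content of the proposition, and you do not prove it: you only announce that you expect to do it by trivializing over affines and comparing $1$-cocycles. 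That computation is not routine, because the Mazur--Messing classification of (\ref{eqn:vect}) is itself defined through the universal vector extension of $A$, so unwinding ``both sides as explicit cocycles'' would in effect force you to re-derive that theory from scratch.

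What is missing is the structural input that makes the identification formal. First, by \cite[Chap.~I, (2.6)]{Mazur-Messing}, the Picard functor of $\wA$ rigidified along the first infinitesimal neighborhood of the zero section, $(\bPic^0_{\wA/S}, I_S(\omega_{\wA}))$, \emph{is} the universal vector extension $0\to V(\omega_{\wA})\to E(A)\to A\to 0$. Second, rigidified Picard functors are functorial in the rigidifier (in the generalized sense of \cite[Def.~2.1.1]{Raynaud}, as exploited in \cite[\S 1]{Onsiper}): the morphism of rigidifiers $I_S(\gamma):Y'\to I_S(\omega_{\wA})$ over $\id_{\wA}$ induces a map of extensions which is exactly the push-out along $\gamma:V(\omega_{\wA})\to V$, i.e.\ the extension classified by $\gamma$, which is (\ref{eqn:vect}) by definition of the Mazur--Messing correspondence. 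A second diagram of rigidifiers (over $\pi:X'\to\wA$, with $\id_{Y'}$ on top) then transports this identification to $(\bPic^0_{X'/S},Y')\cong\bPic^0_{X/S}$. Until you either supply these two compatibilities or actually carry out the cocycle computation you defer, the proof is incomplete at its central point.
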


\begin{proof}
Since $Y = S$, the natural map $(\Pic_{X/S},Y) \to \Pic_{X/S}$
is an isomorphism (as follows e.g. from the exact sequence
(\ref{eqn:exa})).
Thus, $\Pic_{X/S} \cong (\Pic_{X'/S},Y')$ by Lemma \ref{lem:rig}.
Also, we have an exact sequence of group schemes
\[ 0 \longrightarrow \bG_{m,S} \longrightarrow V_{Y'}^* 
\longrightarrow  V \longrightarrow 0 \]
with the notation of Subsection \ref{subsec:pic},  
and hence an exact sequence of \'etale sheaves
\[ 0 \longrightarrow V \longrightarrow (\Pic_{X'/S},Y') 
\longrightarrow \Pic_{X'/S} \longrightarrow 0 \]
by Corollary \ref{cor:long} (this also follows directly from 
the exact sequence (\ref{eqn:exa})).

For each $s \in S$, we have 
$X'_s = \wA_s \times_{k(s)} \bP(\cQ^{\vee}_s \oplus k(s))$
and hence $\bPic^0_{X'_s/k(s)}\cong A_s$. In particular, 
$\bPic^0_{X'_s/k(s)}$ is smooth of dimension independent of $s$.
By \cite[Prop.~9.5.20]{Kleiman}, it follows that $\bPic^0_{X'/S}$
exists and its fiber at any $s \in S$ is $\bPic^0_{X'_s/k(s)}$. 
Thus, the projection $\pi : X' \to \wA$ yields an isomorphism
\[ \pi^*: A = \bPic^0_{\wA/S} 
\stackrel{\cong}{\longrightarrow} \bPic^0_{X'/S}. \]

Moreover, $\pi$ sits in a commutative diagram of rigidifiers 
in the (generalized) sense of \cite[Def.~2.1.1]{Raynaud}
\[ \CD
Y' @>{\id}>> Y' \\
@V{\iota'}VV @V{I_S(\gamma)}VV \\
X' @>{\pi}>> \wA \\
\endCD \]
which yields a commutative diagram of exact sequences
\[ \CD
0 @>>> V @>>> (\Pic_{\wA/S},Y') @>>>\Pic_{\wA/S} @>>> 0 \\
& & @V{\id}VV @V{\pi^*}VV @V{\pi^*}VV \\
0 @>>> V @>>> (\Pic_{X'/S},Y') @>>>\Pic_{X'/S} @>>> 0 \\
\endCD \]
in view of \cite[\S 1]{Onsiper}. It follows that 
$(\bPic^0_{\wA/S},Y')$ and $(\bPic^0_{X'/S},Y')$ exist and are isomorphic
via the commutative diagram of exact sequences
\[ \CD
0 @>>> V @>>> (\bPic^0_{\wA/S},Y') @>>> \bPic^0_{\wA/S} @>>> 0 \\
& & @V{\id}VV @V{\pi^*}VV @V{\pi^*}VV \\
0 @>>> V @>>> (\bPic^0_{X'/S},Y') @>>> \bPic^0_{X'/S} @>>> 0. \\
\endCD \]
On the other hand, the commutative diagram of rigidifiers
\[ \CD
Y' @>{I_S(\gamma)}>> I_S(\omega_{\wA}) \\
@VVV @VVV \\
\wA @>{\id}>> \wA \\
\endCD \]
yields a commutative diagram of exact sequences
\[ \CD
0 @>>> V(\omega_{\wA}) @>>> (\bPic^0_{\wA/S},I_S(\omega_{\wA})) 
@>>>\bPic^0_{\wA/S} @>>> 0 \\
& & @V{\gamma}VV @VVV @V{\id}VV \\
0 @>>> V @>>> (\bPic^0_{\wA/S},Y') @>>>\bPic^0_{\wA/S} @>>> 0. \\
\endCD \]
Moreover, the top line in the above diagram is the universal
vector extension of $A$, in view of 
\cite[Chap.~I, (2.6)]{Mazur-Messing}. It follows that the
bottom line is the extension (\ref{eqn:vect}). Finally, the local
projectivity assertion follows from the construction and 
Proposition \ref{prop:rep}.
\end{proof}

\subsection{Extensions by quasi-split tori}
\label{subsec:semi}

Consider a torus $T$ over $S$. We say that $T$ is quasi-split
if there exists a finite \'etale Galois cover $f: S' \to S$
with group $\Gamma$, and a permutation $\bZ[\Gamma]$-module $P$ 
satisfying
\[ T_{S'} \cong \bG_{m,S'} \otimes_{\bZ} P \]
as group schemes over $S'$ equipped with an action of $\Gamma$,
compatible with its action on $S'$. (Recall that a $\bZ[\Gamma]$-module 
is said to be a permutation module if it admits a $\Gamma$-stable 
$\bZ$-basis).

When $S = \Spec(k)$ for a field $k$, the quasi-split tori are
exactly the unit group schemes of finite \'etale $k$-algebras 
(see e.g. \cite[Chap. 2, \S 6.1, Prop.~1]{Voskresenskii}). We shall
extend this to an arbitrary base scheme $S$. 
Let $T$ be a quasi-split torus as above. 
We may decompose the permutation module $P$ as
\[ P = \bigoplus_{i=1}^m \bZ [\Gamma/\Gamma_i], \]
where $\Gamma_1,\ldots,\Gamma_m$ are subgroups of $\Gamma$. 
Consider the scheme $Z$ over $S = S'/\Gamma$ defined by 
\[ Z = \bigsqcup_{i=1}^m S'/\Gamma_i. \] 
Alternatively, we have $Z = (\bigsqcup_{i=1}^n S')/\Gamma$, where 
$n$ denotes the rank of the free $\bZ$-module $P$, and $\Gamma$ acts 
on $\bigsqcup_{i=1}^n S'$ by permuting the $n$ copies of $S'$ with orbits
$\Gamma/\Gamma_1,\ldots,\Gamma/\Gamma_m$.
 
\begin{lemma}\label{lem:qs}
With the above notation, the natural map
$q: \bigsqcup_{i=1}^n S' \to Z$ is a finite \'etale Galois cover
with group $\Gamma$. Also, $Z$ is finite \'etale over $S$, 
and $T \cong V_Z^*$ as $S$-group schemes. 
Conversely, if $Z'$ is a finite \'etale scheme and $S$ is 
connected, then $V_{Z'}^*$ is a quasi-split torus.
\end{lemma}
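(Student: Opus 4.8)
The plan is to handle the cover $q$ first, deduce that $Z$ is finite \'etale over $S$, then identify the Weil restriction $V_Z^* = \R_{Z/S}(\bG_{m,Z})$ with $T$ by faithfully flat descent along $S' \to S$, and finally obtain the converse by passing to a Galois closure.

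For the first assertion I would begin by checking that $\Gamma$ acts freely on $\bigsqcup_{i=1}^n S'$: since $S' \to S$ is Galois with group $\Gamma$, the group acts freely on $S'$, and an element fixing a point of the copy of $S'$ indexed by a coset $\gamma_0 \Gamma_i$ would have to fix a point of $S'$ under the deck action, hence be trivial. The quotient $(\bigsqcup_{i=1}^n S')/\Gamma$ may then be computed orbit by orbit, the orbit indexed by $\Gamma/\Gamma_i$ contributing $S'/\Gamma_i$ (with $\Gamma_i$ acting on $S'$ through the inclusion $\Gamma_i \subset \Gamma$); this reproduces $Z = \bigsqcup_{i=1}^m S'/\Gamma_i$. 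Standard properties of quotients of finite \'etale Galois covers by subgroups then show that $q$ is a finite \'etale Galois cover with group $\Gamma$, and that each $S'/\Gamma_i \to S$ is finite \'etale as an intermediate cover of $S'/S$; hence $Z \to S$ is finite \'etale. (Alternatively, $Z \times_S S'$ is finite \'etale over $S'$, as will be seen below, and one descends along the fppf cover $S' \to S$.)

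The core of the argument is the identification $T \cong V_Z^*$. Here I would use that Weil restriction commutes with the flat base change $S' \to S$, together with the equivalence between finite \'etale $S$-schemes split by $S'$ and finite $\Gamma$-sets attached to the cover $S'/S$: the scheme $Z$ corresponds to the $\Gamma$-set $E := \bigsqcup_{i=1}^m \Gamma/\Gamma_i$, so that $Z \times_S S' \cong \bigsqcup_{e \in E} S'$ $\Gamma$-equivariantly, with $\Gamma$ acting diagonally on the index set $E$ and on $S'$. Restricting $\bG_m$ along this disjoint union turns it into a product (Lemma \ref{lem:units}(ii)), which yields a $\Gamma$-equivariant isomorphism $V_Z^* \times_S S' \cong \prod_{e \in E} \bG_{m,S'} = \bG_{m,S'} \otimes_{\bZ} \bZ[E] = \bG_{m,S'} \otimes_{\bZ} P$, in which $\Gamma$ acts simultaneously through its action on $S'$ and through the permutation module structure of $P$. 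Since by hypothesis $T \times_S S' \cong \bG_{m,S'} \otimes_{\bZ} P$ carries exactly this $\Gamma$-equivariant structure, $T$ and $V_Z^*$ are equipped with the same descent datum relative to $S'/S$, hence are isomorphic as $S$-group schemes.

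For the converse, given $Z'$ finite \'etale over $S$ with $S$ connected, I would pass to a Galois closure: a connected finite \'etale Galois cover $S' \to S$ with group $\Gamma$ over which $Z'$ becomes a disjoint union of copies of $S'$, so that $Z'$ corresponds to a finite $\Gamma$-set $E'$. The computation above then produces a $\Gamma$-equivariant isomorphism $V_{Z'}^* \times_S S' \cong \bG_{m,S'} \otimes_{\bZ} \bZ[E']$ with $\bZ[E']$ a permutation $\bZ[\Gamma]$-module; since $V_{Z'}^*$ is a torus by Lemma \ref{lem:units}(iii), it is quasi-split. The step I expect to need the most care is the $\Gamma$-equivariance in $Z \times_S S' \cong \bigsqcup_{e \in E} S'$ and its compatibility with the permutation module structure of $P$, i.e., the precise bookkeeping of how $\Gamma$ acts at once on the base $S'$ and on the indexing set in the Galois correspondence for $S'/S$.
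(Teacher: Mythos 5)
Your argument is correct and follows essentially the same route as the paper: identify $Z \times_S S'$ with $\bigsqcup_{i=1}^n S'$ $\Gamma$-equivariantly, use that Weil restriction commutes with base change and turns disjoint unions into products (so that $(V_Z^*)_{S'} \cong \bG_{m,S'} \otimes_{\bZ} P$ with the right $\Gamma$-action), and conclude by Galois descent, with the converse likewise handled via a Galois cover splitting $Z'$. The only (immaterial) differences are that you invoke the abstract equivalence with finite $\Gamma$-sets where the paper writes out the explicit isomorphism $S' \times \Gamma \cong S' \times_S S'$, $(x,g) \mapsto (gx,x)$, and that in the converse you take a connected Galois closure whereas the paper uses the possibly disconnected cover $S' = Z'' \times \Gamma/\Gamma_1$.
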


\begin{proof}
Since $S'/\Gamma_i \cong (S' \times \Gamma/\Gamma_i)/\Gamma$, 
where $\Gamma$ acts diagonally on $S' \times \Gamma/\Gamma_i$, 
we have
\[ Z \cong (S' \times \bigsqcup_{i=1}^m \Gamma/\Gamma_i)/\Gamma, \]
where $\Gamma$ acts diagonally on the right-hand side. In view of
\cite[Exp.~V, Prop.~1.9]{SGA7}, it follows that $q$ is a $\Gamma$-torsor.

To complete the proof of the first assertion, it suffices by descent
to check that the base change $(V_Z)_{S'}$ is finite \'etale over $S'$, 
and $T_{S'} \cong (V_Z^*)_{S'}$
as $S'$-group schemes equipped with a compatible action of $\Gamma$.
Since $V_Z^* = \R_{Z/S}(\bG_{m,Z})$ and Weil restriction commutes with 
base change, we have
$(V_Z^*)_{S'} \cong \R_{Z_{S'}/S'}(\bG_{m,Z_{S'}})$. Moreover, 
\[ Z_{S'} = \bigsqcup_{i=1}^m S' \times_S (S'/\Gamma_i)
\cong \bigsqcup_{i=1}^m (S' \times_S S')/\Gamma_i
\cong \bigsqcup_{i=1}^m (S' \times \Gamma)/\Gamma_i
\cong S' \times \bigsqcup_{i=1}^m \Gamma/\Gamma_i
= \bigsqcup_{i=1}^n S', \]
where the first isomorphism follows from 
\cite[Exp.~V, Prop.~1.9]{SGA7} again, and the second one comes 
from the isomorphism
\[ S' \times \Gamma \stackrel{\cong}{\longrightarrow} S' \times_S S',
\quad (x,g) \longmapsto (gx, x); \]
the composed isomorphism is equivariant for the natural action of
$\Gamma$ on $Z_{S'}$ and its action on $\bigsqcup_{i=1}^n S'$ by permuting
the copies of $S'$. This yields the desired assertions in view of
Lemma \ref{lem:units} (iii).

For the second assertion, we may assume that $Z'$ is connected,
since the product of any two quasi-split tori is easily seen to
be quasi-split. Then, by the classification of finite \'etale
covers in terms of the \'etale fundamental group, there exist
a finite \'etale Galois cover $Z'' \to S$ with group $\Gamma$,
and a subgroup $\Gamma_1 \subset \Gamma$ such that 
$Z' \cong Z''/\Gamma_1$ and the structure map $Z' \to S$ is identified
with the natural morphism $Z''/\Gamma_1 \to Z''/\Gamma = S$.
Thus, $Z' \cong (Z'' \times \Gamma/\Gamma_1)/\Gamma$, where 
$\Gamma$ acts diagonally on $Z'' \times \Gamma/\Gamma_1$. 
Let $S' := Z'' \times \Gamma/\Gamma_1$; then the structure map
$S' \to S$ is a finite \'etale Galois cover with group $\Gamma$.
Moreover, by arguing as in the first part of the proof, we obtain
$\Gamma$-equivariant isomorphisms
\[ Z'_{S'} \cong S' \times_S S'/\Gamma_1 
\cong S' \times \Gamma/\Gamma_1. \]
It follows that 
$V^*_{Z'_{S'}} \cong \bG_{m,S'} \otimes_{\bZ} \bZ[\Gamma/\Gamma_1]$
as an $S'$-torus equipped with a compatible action of $\Gamma$.
Since $V^*_{Z'_{S'}} \cong (V^*_{Z'})_{S'}$, this completes the proof.
\end{proof}

\begin{remark}\label{rem:split}
In the definition of a quasi-split torus $T$, we may replace $S'$ 
with any larger Galois cover. Keeping this in mind, the permutation 
module $P$ is uniquely determined by $T$; the split tori correspond 
of course to the trivial permutation modules. Thus, the direct image 
of $\cO_Z$ under the structure map $Z \to S$ is uniquely determined 
by $T$ as well (this is in fact the Lie algebra of $T$). 
But the $\cO_S$-algebra structure of $\cO_Z$ is not uniquely 
determined by $T$; in fact, the orbits 
$\Gamma/\Gamma_1,\ldots,\Gamma/\Gamma_m$ are not unique, 
since the $\Gamma$-module $\bZ[\Gamma/\Gamma_1]$ does not determine 
the subgroup $\Gamma_1 \subset \Gamma$ up to conjugacy 
(see \cite{Scott}). 
\end{remark}

Next, let $A$ be an abelian scheme and consider the group
$\Ext^1(A,T)$ classifying the extensions of $S$-group schemes
\begin{equation}\label{eqn:semi}
0 \longrightarrow T \longrightarrow G \longrightarrow A 
\longrightarrow 0. 
\end{equation}

\begin{lemma}\label{lem:bw}
With the above notation, there is a canonical isomorphism
\begin{equation}\label{eqn:bw} 
\Ext^1(A,T) \stackrel{\cong}{\longrightarrow} \wA(Z).
\end{equation}
\end{lemma}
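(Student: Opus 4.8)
The plan is to reduce the computation of $\Ext^1(A,T)$ to the split case $T = \bG_{m,S}$, where one has the classical Barsotti--Weil isomorphism $\Ext^1(A,\bG_{m,S}) \cong \wA(S)$, and then to assemble the quasi-split case by Galois descent along the cover $S' \to S$. First I would use the description of $T$ from Lemma \ref{lem:qs}: writing $Z = \bigsqcup_{i=1}^m S'/\Gamma_i$ as a finite \'etale $S$-scheme with $T \cong V_Z^* = \R_{Z/S}(\bG_{m,Z})$, the adjunction between Weil restriction and base change gives, for any $S$-group scheme functorially in the arguments,
\[ \Ext^1_S(A, \R_{Z/S}(\bG_{m,Z})) \cong \Ext^1_Z(A_Z, \bG_{m,Z}), \]
since $\R_{Z/S}$ is right adjoint to $(-)_Z$ and is exact on the relevant categories ($Z/S$ being finite \'etale, hence flat and affine, so base change along $Z \to S$ is exact). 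The right-hand side, by the Barsotti--Weil isomorphism applied over the base $Z$ (valid since $A_Z$ is an abelian scheme over $Z$, with dual $\wA_Z = (\wA)_Z$), is $\widehat{A_Z}(Z) = \wA(Z)$. Composing these identifications yields the desired canonical isomorphism \eqref{eqn:bw}.

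The step that requires care, and which I expect to be the main obstacle, is justifying that $\Ext^1_S(A,-)$ commutes with Weil restriction along $Z \to S$ in the form above. The cleanest route is to note that since $Z \to S$ is finite \'etale, $\R_{Z/S}$ sends short exact sequences of commutative $Z$-group schemes (for the fppf or \'etale topology) to short exact sequences of commutative $S$-group schemes, because $Z \to S$ is a covering for which pushforward is exact; thus $\R_{Z/S}$ is an exact functor between the abelian categories of commutative fppf sheaves, and being right adjoint to the exact pullback $(-)_Z$, it induces on $\Ext^1$ the natural transformation
\[ \Ext^1_S(A, \R_{Z/S} \cF) \cong \Ext^1_Z(A_Z, \cF), \]
where one uses that $A$ (and $A_Z$) are flat, so that $\Ext^i$ in the sheaf category agrees with $\Ext^i$ of group schemes in low degrees. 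Alternatively, and perhaps more in the spirit of the paper, one can argue by descent: pull back \eqref{eqn:bw} along $S' \to S$, where $T_{S'} \cong \bG_{m,S'} \otimes_\bZ P$ with $P = \bigoplus \bZ[\Gamma/\Gamma_i]$ a permutation module, so that $\Ext^1_{S'}(A_{S'}, T_{S'}) \cong \wA(S') \otimes_\bZ P \cong \wA(S' \times_S Z)$ by the split Barsotti--Weil isomorphism and additivity, and then check that this identification is $\Gamma$-equivariant for the diagonal actions, so that taking $\Gamma$-invariants recovers $\Ext^1_S(A,T)$ on the left and $\wA(Z)$ on the right (using $Z = (\bigsqcup_{i=1}^n S')/\Gamma$ from Lemma \ref{lem:qs} and faithfully flat descent for the abelian scheme $\wA$).

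A few routine verifications remain that I would not belabor: that the Barsotti--Weil isomorphism is available over an arbitrary locally noetherian base (this follows from the representability of $\wA = \bPic^0_{A/S}$ and its universal property, together with \cite[Thm.~1.9]{Faltings-Chai} which we have already invoked), that the isomorphism is functorial in $A$ and in $Z$ so that the displayed maps are genuinely canonical, and that the two constructions above (adjunction versus Galois descent) agree — which is automatic from the uniqueness in the universal properties involved. The upshot is that the proof is essentially a bookkeeping exercise combining one nontrivial input, classical Barsotti--Weil over the base $Z$, with the formal exactness properties of Weil restriction along the finite \'etale map $Z \to S$ coming from Lemma \ref{lem:qs}.
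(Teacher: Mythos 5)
Your argument is correct, but it takes a genuinely different route from the paper. The paper does not pass through the adjunction for Weil restriction at all: it invokes \cite[Exp.~VIII, Prop.~3.7]{SGA7} to identify $\Ext^1(A,T)$ with $\Hom(\wT,\wA)$, where $\wT$ is the Cartier dual of $T$, then computes $\Hom(\wT,\wA)$ by Galois descent along $S' \to S$, using that $\wT_{S'}$ is the constant group scheme $\Hom(P,\bZ)_{S'}$ to get $\Hom^{\Gamma}(\wT_{S'},\wA_{S'}) \cong (P\otimes_{\bZ}\wA(S'))^{\Gamma} \cong \bigoplus_i \wA(S')^{\Gamma_i} \cong \wA(Z)$. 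Your main route --- the isomorphism $\Ext^1_S(A,\R_{Z/S}(\bG_{m,Z})) \cong \Ext^1_Z(A_Z,\bG_{m,Z})$ followed by Barsotti--Weil over the base $Z$ --- is precisely the reformulation the paper records in Remark \ref{rem:russell} as a \emph{consequence} of the lemma, there attributed (in the field case, and for finite but not necessarily \'etale $Z$) to Russell ``via a very different argument''; so you have in effect reproved the lemma by Russell's method. What each approach buys: yours is more direct and generalizes beyond the \'etale case, but the burden falls on justifying that $\R_{Z/S}$ is exact on abelian fppf (or \'etale) sheaves and that sheaf $\Ext^1$ computes extensions of group schemes --- for the latter the relevant point is that $T$ is affine, so that an extension of $A$ by $T$ is a representable $T$-torsor over $A$, rather than flatness of $A$ as you suggest; the paper's route stays entirely within $\Hom$-groups, where descent is elementary (and it needs no exactness statement), at the cost of invoking the duality $\Ext^1(A,T)\cong\Hom(\wT,\wA)$. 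Your alternative descent argument is also fine once one notes that $\Hom(A_{S'},T_{S'})=0$ (no nonzero homomorphism from an abelian scheme to an affine group scheme), which kills the relevant $H^1(\Gamma,-)$ and $H^2(\Gamma,-)$ terms and yields $\Ext^1_S(A,T)\cong\Ext^1_{S'}(A_{S'},T_{S'})^{\Gamma}$; this step deserves to be made explicit since the paper itself only obtains the injectivity of the pull-back map as a corollary of the lemma, in the proof of Proposition \ref{prop:semi}.
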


\begin{proof}
By \cite[Exp.~VIII, Prop.~3.7]{SGA7}, we have 
a canonical isomorphism (given by push-out)
\[ \Ext^1(A,T) \stackrel{\cong}{\longrightarrow} \Hom(\wT,\wA), \]
where $\wT$ denotes the Cartier dual of $T$. 
Moreover, the pull-back map
\[ \Hom(\wT,\wA) \longrightarrow \Hom^{\Gamma}(\wT_{S'},\wA_{S'}) \]
is an isomorphism by descent theory (see 
\cite[Exp.~VIII, Cor.~7.6]{SGA1}, which applies since every
$\Gamma$-orbit in $\wT_{S'}$ and in $\wA_{S'}$ is contained in
an open affine subscheme). Also, $\wT_{S'}$ is isomorphic to
the constant group scheme $\Hom(P,\bZ)_{S'}$, equivariantly for
the action of $\Gamma$, and hence
\[ \Hom^{\Gamma}(\wT_{S'},\wA_{S'}) \cong 
(P \otimes_{\bZ} \wA(S'))^{\Gamma} \cong 
\bigoplus_{i=1}^m \wA(S')^{\Gamma_i} \cong
\wA(\bigsqcup_{i=1}^m S'/\Gamma_i) \cong \wA(Z). \]
\end{proof}

\begin{remark}\label{rem:russell}
In view of the isomorphism $T \cong \R_{Z/S}(\bG_{m,Z})$ and 
the Weil-Barsotti formula (see \cite[Thm.~18.1]{Oort},
the isomorphism (\ref{eqn:bw}) may be rewritten as 
\[ \Ext^1(A, \R_{Z/S}(\bG_{m,Z})) \cong \Ext^1(A_Z,\bG_{m,Z}). \]
Such an isomorphism has also been obtained by Russell 
(via a very different argument) when $S = \Spec(k)$ for a field 
$k$, and $Z$ is finite but not necessarily \'etale; see 
\cite[Prop.~1.19]{Russell}. In fact, Russell's argument extends
to our relative setting, and yields an isomorphism
$\Ext^1(A,V_Z^*) \cong \wA(Z)$ 
for any finite flat $S$-scheme $Z$.
\end{remark}

We now define
\[ Y':= Z \sqcup S. \]
Then $Y'$ is finite and \'etale over $S$. Moreover, any
extension (\ref{eqn:semi}) yields a morphism $Z \to \wA$ and hence 
a map $Y' \to \wA$, where $S$ is sent to $\wA$ via the zero section 
$s_0$. We also have a closed immersion 
$Z \to \Spec \, \Sym_{\cO_S}(\cA)$,
and hence a closed immersion $Y' \to \bP(\cA \oplus \cO_S)$, where 
$S$ is sent to the section at infinity. This yields a closed immersion
\[ \iota': Y' \longrightarrow \wA \times_S \bP(\cA \oplus \cO_S). \]
Denoting by $\psi: Y'  \to S := Y$ the structure map, we 
may again form the pinching diagram (\ref{eqn:pin}), 
where $\Pic_{X/S}$, $\Pic_{X'/S}$ are represented by group schemes 
$\bPic_{X/S}$, $\bPic_{X'/S}$. We now obtain the same statement
as Proposition \ref{prop:vect}:

\begin{proposition}\label{prop:semi}
With the above notation and assumptions, the connected component of
the zero section, $\bPic_{X/S}^0$, exists and is isomorphic to $G$.
If $A$ is locally projective, then so is $X$.
\end{proposition}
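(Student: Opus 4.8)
The plan is to mimic the proof of Proposition \ref{prop:vect}, replacing the role of the universal vector extension with the classification of extensions by the quasi-split torus $T$ from Lemma \ref{lem:bw}. Since $Y' = Z \sqcup S$ with $Z$ finite \'etale over $S$, Lemma \ref{lem:units} gives $V_{Y'}^* \cong V_Z^* \times_S \bG_{m,S} \cong T \times_S \bG_{m,S}$, so the exact sequence (\ref{eqn:exa}) (with $V_{X'}^* \cong \bG_{m,S}$ embedded via the factor corresponding to the copy of $S$ sent to the section at infinity) yields an exact sequence of \'etale sheaves
\[ 0 \longrightarrow T \longrightarrow (\Pic_{X'/S},Y') \longrightarrow \Pic_{X'/S} \longrightarrow 0. \]
Exactly as in Proposition \ref{prop:vect}, since $Y = S$ the natural map $(\Pic_{X/S},Y) \to \Pic_{X/S}$ is an isomorphism, so by Lemma \ref{lem:rig} we get $\Pic_{X/S} \cong (\Pic_{X'/S},Y')$, and this sheaf is representable by Proposition \ref{prop:rep}; its connected component of the zero section $\bPic^0_{X/S}$ therefore exists.

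Next I would identify this representing group scheme with $G$. Writing $\pi: X' = \wA \times_S \bP(\cA \oplus \cO_S) \to \wA$ for the first projection, one has $\bPic^0_{X'_s/k(s)} \cong \wA_s^\vee \cong A_s$ for each $s \in S$ by the product formula for Picard schemes of a product with a projective space, so by \cite[Prop.~9.5.20]{Kleiman} $\bPic^0_{X'/S}$ exists with fibers $\bPic^0_{X'_s/k(s)}$, and $\pi^*: A = \bPic^0_{\wA/S} \xrightarrow{\ \cong\ } \bPic^0_{X'/S}$. As in the vector-group case, $\pi$ sits in a commutative square of (generalized) rigidifiers over $\id_{Y'}$, which by \cite[\S 1]{Onsiper} yields a compatible isomorphism $(\bPic^0_{\wA/S},Y') \cong (\bPic^0_{X'/S},Y')$ of extensions of $\bPic^0_{X'/S}$ by $T$. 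Thus it remains to show that the extension
\[ 0 \longrightarrow T \longrightarrow (\bPic^0_{\wA/S},Y') \longrightarrow \bPic^0_{\wA/S} \longrightarrow 0 \]
coincides, under the identification $\bPic^0_{\wA/S} \cong A$, with the prescribed extension (\ref{eqn:semi}).

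For this last point, the idea is that the rigidification of the Poincar\'e-type line bundle along the copy of $Z$ inside $\wA$ — where $Z \to \wA$ is the morphism attached to the extension (\ref{eqn:semi}) via Lemma \ref{lem:bw} — is precisely what records the class in $\Ext^1(A,T) \cong \wA(Z)$. Concretely, rigidifying along $Z$ factors as: first rigidify the universal sheaf on $\wA \times_S \bPic^0_{\wA/S}$ along the zero section $s_0$ of $\wA$ (this is the tautological rigidification, giving back $\bPic^0_{\wA/S}$ itself with no extension), and then observe that the restriction maps along the $m$ sections $S'/\Gamma_i \to \wA$ assemble, through the chain of identifications in the proof of Lemma \ref{lem:bw}, into the pull-back of the Poincar\'e bundle along $(f_i \times \id): \wA \times_S \bPic^0 \to \wA \times_S \bPic^0$ where $f_i : Z \to \wA$ is the given morphism; by the Weil--Barsotti description of $A = \wA^\vee$ this is exactly the extension classified by $f_i \in \wA(Z)$, i.e. (\ref{eqn:semi}). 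I expect this compatibility — matching the Onsiper/Raynaud rigidified-Picard description of the extension with the $\Ext^1(A,T) \cong \wA(Z)$ description of Lemma \ref{lem:bw} — to be the main obstacle, as it requires carefully tracking how the Weil restriction $T = \R_{Z/S}(\bG_{m,Z})$ interacts with rigidifying along $Y' = Z \sqcup S$, dual to the way the universal vector extension appeared in Proposition \ref{prop:vect}; in particular one must check the zero-section component of $Y'$ kills the $\bG_{m,S}$ coming from $V_{X'}^*$ and contributes nothing to the extension, while the $Z$-component carries the full class. Finally, the local projectivity assertion follows, just as before, from the construction (the Proj is visibly locally projective when $A$, hence $\wA$, is) together with Proposition \ref{prop:rep}(i).
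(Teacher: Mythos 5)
Your setup agrees with the paper's: the identifications $\Pic_{X/S} \cong (\Pic_{X'/S},Y')$, the quotient $V_{Y'}^*/\bG_{m,S} \cong T$, the isomorphism $\pi^* : A \cong \bPic^0_{X'/S}$, and the transfer of the extension problem to $(\bPic^0_{\wA/S},Y')$ via the commutative square of rigidifiers are all exactly the intended route. (One small imprecision: the map $V_{X'}^* \to V_{Y'}^* \cong V_Z^* \times_S \bG_{m,S}$ is the \emph{diagonal} embedding, not the inclusion of the factor corresponding to the extra copy of $S$; the quotient is still $T$ via $(x,t) \mapsto xt^{-1}$, as in the paper's explicit formula for $\gamma$ in the split case, but this is exactly the kind of bookkeeping that matters in the next step.)

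The genuine gap is at the point you yourself flag as ``the main obstacle'': identifying the extension $0 \to T \to (\bPic^0_{\wA/S},Y') \to A \to 0$ with the prescribed class in $\Ext^1(A,T) \cong \wA(Z)$. Your proposed direct argument via the Poincar\'e bundle and Weil--Barsotti duality is only a heuristic sketch; it is never reduced to a statement that is actually verified, and carrying it out directly for a general quasi-split $T$ would force you to track the $\Gamma$-action on $Z$, on $T = \R_{Z/S}(\bG_{m,Z})$, and on the rigidification simultaneously. The paper avoids this by a two-step argument you do not have: first, in the \emph{split} case $T \cong \bG_{m,S}^n$, the identification of the rigidified extension with the point $(s_1,\ldots,s_n) \in \wA(S)^n$ is not re-proved but cited from \"Onsiper (\cite[Prop.~1, Cor.~1.1]{Onsiper}) --- that is where the actual Poincar\'e-bundle computation lives; second, the general quasi-split case is reduced to the split case by base change along the Galois cover $S' \to S$, using that the pull-back $\Ext^1(A,T) \to \Ext^1(A_{S'},T_{S'})$ is \emph{injective} because under (\ref{eqn:bw}) it becomes $\wA(Z) \to \wA(\bigsqcup_{i=1}^n S')$ for the finite \'etale surjection $q$ of Lemma \ref{lem:qs}, which is faithfully flat. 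Without either the citation for the split case or this descent/injectivity step, the central identification remains unproved in your proposal.
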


\begin{proof}
As in the proof of Proposition \ref{prop:vect}, the natural map 
$(\Pic_{X/S},Y) \to \Pic_{X/S}$ is an isomorphism, and 
$\Pic_{X/S} \cong (\Pic_{X'/S},Y')$. 

Consider first the case where $T \cong \bG_{m,S}^n$ is split. Then 
with the notation of Subsection \ref{subsec:pic}, the map 
$V_Y^* \to V_{Y'}^*$ may be identified with the diagonal, 
$\delta : \bG_{m,S} \to \bG_{m,S}^{n+1}$. The latter sits 
in an exact sequence of group schemes
\[ \CD 
0 @>>> \bG_{m,S} @>{\delta}>> \bG_{m,S}^{n+1} @>{\gamma}>> 
\bG_{m,S}^n = T @>>> 0,
\endCD \]
where $\gamma(x_1,\ldots,x_n,x_0) := (x_1 x_0^{-1}, \ldots, x_n x_0^{-1})$.
In view of Corollary \ref{cor:long}, this yields an exact sequence
\[ 0 \longrightarrow T \longrightarrow 
(\Pic_{X'/S},Y') \longrightarrow \Pic_{X'/S}
\longrightarrow 0. \]
Next, arguing again as in the proof of Proposition \ref{prop:vect},
we obtain that the projection $\pi: X' \to A$ yields an isomorphism
$\pi^*: A = \bPic^0_{\wA/S} \stackrel{\cong}{\longrightarrow} \bPic^0_{X'/S}$
which extends to an isomorphism of exact sequences
\[ \CD
0 @>>> T @>>> (\bPic^0_{\wA/S},Y') @>>> \bPic^0_{\wA/S} @>>> 0 \\
& & @V{\id}VV @V{\pi^*}VV @V{\pi^*}VV \\
0 @>>> T @>>> (\bPic^0_{X'/S},Y') @>>> \bPic^0_{X'/S} @>>> 0. \\
\endCD \]
Moreover, the top extension
$0 \to T \to (\bPic^0_{\wA/S},Y') \to A \to 0$
is sent to $(s_1, \ldots, s_n)$ by the isomorphism
(\ref{eqn:bw}), as follows from \cite[Prop.~1]{Onsiper} in the case 
where $n = 1$, and from (the proof of) \cite[Cor.~1.1]{Onsiper} 
in the general case. This yields isomorphisms 
$G \cong (\bPic^0_{\wA/S},Y') \cong (\bPic^0_{X'/S},Y') \cong \bPic^0_{X/S}$.

For an arbitrary quasi-split torus $T$, we reduce similarly to
showing that the above extension corresponds to the map 
$Z \to \wA$ under the isomorphism (\ref{eqn:bw}). But this holds 
after the Galois base change $f : S' \to S$ by the preceding step.
Moreover, the pull-back map $\Ext^1(A,T) \to \Ext^1(A_{S'},T_{S'})$
is injective, since it is identified under the isomorphism 
(\ref{eqn:bw}) to the map $\wA(Z) \to \wA(\bigsqcup_{i=1}^n S')$ induced
by the natural morphism $q: \bigsqcup_{i=1}^n S' \to Z$; 
moreover, $q$ is finite and \'etale by Lemma \ref{lem:qs},
and hence is faithfully flat.
\end{proof}

\begin{remark}\label{rem:proj}
In Proposition \ref{prop:vect} (resp. Proposition \ref{prop:semi}), 
we may replace $\bP(\cQ^{\vee} \oplus \cO_S)$ 
(resp. $\bP(\cA \oplus \cO_S)$) 
with any projective space bundle over $S$ that contains $Y'$.
Here, by a projective space bundle, we mean the projectivization
of a locally free sheaf of finite rank over $S$.
\end{remark}

\medskip

\noindent
{\sc Proof of Theorem \ref{thm:first}}.
Note that the quotients $G/T$, $G/V$ exist and sit in extensions
\[ 0 \longrightarrow V \longrightarrow G/T \longrightarrow A
\longrightarrow 0, \quad 
0 \longrightarrow T \longrightarrow G/V \longrightarrow A
\longrightarrow 0. \]
The sum of these extensions is the extension (\ref{eqn:first}),
since the natural map $G \to G/T \times_A G/V$ is easily
seen to be an isomorphism. Moreover, these extensions
yield morphisms of schemes $Y'_1 := I_S(\cQ^{\vee}) \to \wA$, where 
$V = V(\cQ^{\vee})$, and $Y'_2 := Z \sqcup S \to \wA$; in turn,
this yields closed immersions 
$Y'_1 \hookrightarrow \wA \times_S \bP(\cQ^{\vee} \oplus \cO_S)$
and $Y'_2 \hookrightarrow \wA \times_S \bP(\cA \oplus \cO_S)$.
Now consider the composition of the closed immersions
\[ Y' := Y'_1 \sqcup Y'_2 \longrightarrow 
\wA \times_S (\bP(\cQ^{\vee} \oplus \cO_S) \sqcup \bP(\cA \oplus \cO_S))
\longrightarrow 
\wA \times_S \bP(\cQ^{\vee} \oplus \cO_S \oplus \cA \oplus \cO_S) =: X', \]
and the natural map $Y' = Y'_1 \sqcup Y'_2 \to S \sqcup S =: Y$. 
Then the statement follows by combining Lemma \ref{lem:fib}, 
Propositions \ref{prop:vect} and \ref{prop:semi}, and 
Remark \ref{rem:proj}.

\section{Relative unit groups}
\label{sec:rel}

\subsection{Definition and first properties} 
\label{subsec:def}

Throughout this section, we fix a base field $k$ and choose 
an algebraic closure $\bar{k}$. We denote by $k^{\sep}$
the separable closure of $k$ in $\bar{k}$, and by $\Gamma$ the
Galois group of $k^{\sep}/k$.

We shall consider (commutative) artinian $k$-algebras. Given such 
an algebra $A$, we denote by $\mu^A$ its group scheme of units, 
introduced in Remark \ref{rem:field}. Then 
$\mu^A = \R_{A/k}(\bG_{m,A})$, where $\R_{A/k}$ denotes the Weil 
restriction (see e.g. \cite[App.~A.5]{CGP}). Thus, $\mu^A$ is 
a connected affine algebraic group with Lie algebra $A$. Also, 
we may uniquely decompose $A$ as a direct product 
$A_1 \times \cdots \times A_n$ of local $k$-algebras; then 
$\mu^A \cong \mu^{A_1} \times \cdots \times \mu^{A_n}$.

When $A$ is a subalgebra of an algebra $B$, we have 
$\mu^A \subset \mu^B$ (by Remark \ref{rem:field} again) and we set
\[ \mu^{B/A} := \mu^B/\mu^A. \]
Then $\mu^{B/A}$ is a connected affine algebraic group, that we shall
call the \emph{relative unit group}; its Lie algebra is $B/A$.
Any chain of algebras $A \subset B \subset C$ yields an exact sequence
of algebraic groups
\begin{equation}\label{eqn:chain}
0 \longrightarrow \mu^{B/A} \longrightarrow \mu^{C/A}
\longrightarrow \mu^{C/B} \longrightarrow 0.
\end{equation}
Also, note that $\mu^{(A \times A)/A} \cong \mu^A$ in view of the exact
sequence
\[ \CD
0 @>>> \mu^A @>>> \mu^{A \times A} = \mu^A \times \mu^A
@>{f}>> \mu^A @>>> 0,
\endCD \]
where $f(x,y) = x y ^{-1}$. 

Our main motivation for studying relative unit groups comes from 
the following:

\begin{proposition}\label{prop:pic}
When $k$ is perfect, the algebraic groups of the form 
$\mu^{B/A}$ are exactly the affine parts of Picard varieties of 
projective varieties with finite non-normal locus.
\end{proposition}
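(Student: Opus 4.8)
I would prove this as a two-way implication, using the machinery already developed in Sections \ref{sec:ppf} and \ref{sec:rel} together with Geisser's description of the affine part cited in the introduction.

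\smallskip

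\emph{From relative unit groups to Picard varieties.} Given artinian $k$-algebras $A \subset B$, I want a projective variety $X$ with finite non-normal locus such that the affine part of $\Pic^0(X)$ is $\mu^{B/A}$. The idea is to take $X'$ to be any smooth projective $k$-variety whose Picard variety is an abelian variety that is large enough; in fact, since we only want the \emph{affine part} to be $\mu^{B/A}$, and since the affine part of $\Pic^0(X)$ is insensitive to the abelian-variety quotient, I may as well take $X' = \bP^N$ for $N$ large (so $\bPic^0_{X'/k} = 0$). Set $Y := \Spec(A)$ and $Y' := \Spec(B)$, with $\psi : Y' \to Y$ the morphism dual to the inclusion $A \hookrightarrow B$; since $A \hookrightarrow B$ is injective, the hypothesis $\cO_Y \hookrightarrow \psi_*(\cO_{Y'})$ holds. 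Choose any closed immersion $\iota' : Y' \hookrightarrow \bP^N = X'$ (possible for $N$ large since $Y'$ is a finite $k$-scheme). Pinching $X'$ along $Y'$ via $\psi$ produces, by Remark \ref{rem:field}, a projective variety $X$ with an exact sequence
\[ 0 \longrightarrow \mu^{B/A} \longrightarrow \bPic_{X/k} \longrightarrow \bPic_{X'/k} = 0 \longrightarrow 0, \]
so $\bPic^0_{X/k} = \mu^{B/A}$, which is already affine and connected; its affine part is itself. The non-normal locus of $X$ is contained in the image $\iota(Y) \subset X$, a finite set, since $\varphi : X' \to X$ restricts to an isomorphism $X' \setminus Y' \to X \setminus Y$ and $X'$ is normal. (If one wants the abelian part of $\Pic^0(X)$ to realize a prescribed abelian variety too, replace $\bP^N$ by a smooth projective variety $X'$ with that Picard variety, embed $Y'$ avoiding issues via Remark \ref{rem:proj}; but for the stated proposition only the affine part matters.)

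\smallskip

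\emph{From Picard varieties to relative unit groups.} Conversely, let $X$ be a projective $k$-variety with finite non-normal locus, $k$ perfect. Let $\nu : \tilde X \to X$ be the normalization; then $\tilde X$ is a smooth (after possibly working over $\bar k$ and descending — here perfectness is used) projective variety, and $\nu$ is an isomorphism outside a finite set $Y \subset X$, with $Y' := \nu^{-1}(Y)$ finite over $Y$. The conductor square realizes $X$ as the pinching of $\tilde X$ along $Y'$ via $\psi := \nu|_{Y'} : Y' \to Y$, with $A := \cO(Y) \hookrightarrow \cO(Y') =: B$ artinian $k$-algebras (finite over $k$ since $Y$, $Y'$ are finite $k$-schemes; here one must check that $X$ being a \emph{variety}, i.e. geometrically integral, forces $X$ to be this pinching and forces $\cO_Y \hookrightarrow \psi_*\cO_{Y'}$ — this is standard, the conductor ideal). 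By Remark \ref{rem:field} again,
\[ 0 \longrightarrow \mu^{B/A} \longrightarrow \bPic_{X/k} \longrightarrow \bPic_{\tilde X/k} \longrightarrow 0, \]
and $\bPic^0_{\tilde X/k}$ is an abelian variety (here $\tilde X$ geometrically normal and projective, by \cite[Thm.~9.5.4]{Kleiman}). Therefore $\mu^{B/A}$ is the affine part of $\Pic^0(X) = (\bPic^0_{X/k})_{\red}$, as the extension of an abelian variety by the connected affine group $\mu^{B/A}$ exhibits exactly the Chevalley decomposition on neutral components (and $\mu^{B/A}$ is already reduced, being smooth).

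\smallskip

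\emph{Main obstacle.} The delicate point is the converse direction: one must argue that a projective \emph{variety} $X$ with finite non-normal locus is precisely the pinching of its normalization along the reduced structure — or rather the scheme structure — carried by the conductor, and that the resulting finite $k$-schemes $Y$, $Y'$ have the property $\cO_Y \hookrightarrow \psi_*\cO_{Y'}$ so that Remark \ref{rem:field} applies verbatim. This is where geometric integrality of $X$ and perfectness of $k$ enter (to guarantee $\tilde X$ is smooth rather than merely normal, and to descend the normalization from $\bar k$), and it requires a clean statement that pinching along the conductor recovers $X$; I would cite \cite[Thm.~5.4]{Ferrand} or \cite{Howe} for the cocartesian conductor square. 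Everything else is formal given the exact sequence of Remark \ref{rem:field} and the fact, recalled there, that $\mu^{B/A}$ is the affine part of the Picard variety of $X$ when $k$ is perfect and $\tilde X$ is geometrically normal.
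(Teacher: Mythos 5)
Your proposal is correct and follows essentially the same route as the paper: the converse direction via the normalization, the conductor square, the exact sequence of Remark \ref{rem:field}, and Kleiman's theorem that $\Pic^0$ of a geometrically normal projective variety is an abelian variety; and the construction direction by pinching $\Spec(B) \hookrightarrow \bP^N$ along $\Spec(B) \to \Spec(A)$. (Two harmless slips: $\bPic_{\bP^N/k}$ is $\bZ$, not $0$ — only its neutral component vanishes, which is all you use — and the normalization $\tilde X$ need only be geometrically normal, not smooth, which is exactly what Kleiman's theorem requires and what perfectness of $k$ guarantees.)
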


\begin{proof}
Let $X$ be such a variety, and denote by $\varphi : X' \to X$ 
the normalization. Then $X'$ is projective, and 
we have an exact sequence 
$0 \to \mu^{B/A} \to \Pic^0(X) \to \Pic^0(X') \to 0$
for appropriate algebras $A \subset B$ (see Remark \ref{rem:field}).
Moreover, $\Pic^0(X')$ is an abelian variety by 
\cite[Thm.~9.5.4, Rem.~9.5.6]{Kleiman}. Thus, $\mu^{B/A}$ 
is the affine part of $\Pic^0(X)$.

Conversely, given algebras $A \subset B$, we may embed 
$\Spec(B)$ in some projective space $\bP$, and form the
pinching diagram
\[ \CD
\Spec(B) @>>> \bP \\
@VVV @VVV \\
\Spec(A) @>>> X.\\
\endCD \]
Then $\mu^{B/A}= \Pic^0(X)$ in view of Remark \ref{rem:field} 
again.
\end{proof}

Since relative unit groups are interesting in their own right, 
we shall consider them in more detail than is needed for 
applications to Picard varieties. We begin with the following:

\begin{examples}\label{ex:def}
(i) Let $K/k$ be a finite separable field extension. We may assume 
that $K \subset k^{\sep}$; we then denote by $\Gamma_K \subset \Gamma$ 
the Galois group of $k^{\sep}/K$. Then $\mu^K$ is a torus with character 
module $\bZ[\Gamma/\Gamma_K]$.
It follows that $\mu^{K/k}$ is a torus as well, with character module 
the kernel of the augmentation map $\bZ[\Gamma/\Gamma_K] \to \bZ$. 

\smallskip

\noindent
(ii) More generally, consider an algebra $A$ which is separable
(or equivalently, \'etale). Then $\mu^A$ is a quasi-split torus; 
moreover, all quasi-split tori are obtained in this way, as recalled
in Subsection \ref{subsec:semi}.

\smallskip

\noindent
(iii) Let $A := k \oplus I$, where $I$ is an ideal of square $0$.
Then $\mu^{A/k}$ is the vector group associated with $I$.

\smallskip

\noindent
(iv) Assume that $\charc(k) = p > 0$ and $[k^{1/p} : k] = p$. 
Let $K := k^{1/p}$ and choose $t \in k \setminus k^p$. Then
$\mu^{K/k}$ is isomorphic to the closed subgroup scheme of
$\bG_a^p$ defined by 
$x_0^p + t x_1^p + \cdots + t^{p-1} x_{p-1}^p = x_{p-1}$
(see \cite[Prop.~VI.5.3]{Oesterle}). In particular, $\mu^{K/k}$ 
is unipotent, and contains no copy of $\bG_a$ in view of 
\cite[Lem.~VI.5.1]{Oesterle}. In other words, $\mu^{K/k}$ is 
$k$-wound in the sense of Tits (see \cite[V.3]{Oesterle} and 
also \cite[Def.~B.2.1, Cor.~B.2.6]{CGP}). 
\end{examples}

Next, we collect basic properties of relative unit groups, in 
a series of lemmas.

\begin{lemma}\label{lem:alg}
{\rm (i)} Let $I$ be an ideal of an algebra $A$. Then the quotient map 
$A \to A/I$ yields an epimorphism $\gamma: \mu^A \to \mu^{A/I}$. 
If $I$ is nilpotent, then $\Kern(\gamma) = 1 + I$ with an obvious
notation.

\smallskip

\noindent
{\rm (ii)} Let $I \subset A \subset B$, where $I$ is a nilpotent 
ideal of $B$. Then the natural map 
$\mu^{B/A} \to \mu^{(B/I)/(A/I)}$ is an isomorphism.

\smallskip

\noindent
{\rm (iii)} Let $A,A'$ be subalgebras of an algebra $B$. 
Then the natural map $\iota : \mu^{A'/(A \cap A')} \to \mu^{B/A}$
is a closed immersion.

\smallskip

\noindent
{\rm (iv)} Let $K/k$ be a finite extension of fields. Then 
the base change $\mu^{B/A}_K$ is isomorphic to 
$\mu^{B \otimes_k K/A \otimes_k K}$ as a $K$-group scheme.
\end{lemma}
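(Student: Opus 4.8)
The plan is to treat the four parts in turn, all via the functor of points $\mu^A(R)=(R\otimes_kA)^\ast$ on $k$-algebras $R$. For (i): the quotient map $A\to A/I$ induces, for every $R$, the surjection $R\otimes_kA\to R\otimes_k(A/I)$ with kernel $R\otimes_kI$; when $I$ is nilpotent so is $R\otimes_kI$, and since an element of a ring that is a unit modulo a nilpotent ideal is itself a unit, the map $\mu^A(R)\to\mu^{A/I}(R)$ is surjective for all $R$. Hence $\gamma$ is an epimorphism of fppf sheaves, so of group schemes; for a general ideal $I$ of the artinian algebra $A$ one decomposes $A$ into its local factors, on each of which $I$ is either nilpotent or the whole factor, and the same conclusion follows. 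An element of $\mu^A(R)$ lies in $\Kern(\gamma)(R)$ exactly when it belongs to $1+(R\otimes_kI)$, which identifies $\Kern(\gamma)$ with the subgroup scheme $1+I$ of $\mu^A$.

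For (ii): since $A\cdot I\subseteq B\cdot I=I$, the nilpotent ideal $I$ is also an ideal of $A$, so (i) gives epimorphisms $\mu^A\to\mu^{A/I}$ and $\mu^B\to\mu^{B/I}$ with the same kernel $1+I$, which moreover lies inside $\mu^A\subseteq\mu^B$. The third isomorphism theorem for quotients of commutative affine group schemes then yields $\mu^{B/A}=\mu^B/\mu^A\cong(\mu^B/(1+I))\big/(\mu^A/(1+I))=\mu^{B/I}/\mu^{A/I}=\mu^{(B/I)/(A/I)}$.

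For (iii): I would show that $\Kern(\iota)$ is the trivial group scheme, and then conclude by the standard fact that a homomorphism of affine algebraic groups with trivial kernel is a closed immersion. The key observation is that, $k$ being a field, $-\otimes_kR$ is exact, so the exact sequence $0\to A\cap A'\to A\oplus A'\to B$ gives $R\otimes_k(A\cap A')=(R\otimes_kA)\cap(R\otimes_kA')$ inside $R\otimes_kB$ for every $k$-algebra $R$. A point of $\Kern(\iota)$ over $R$ is, fppf-locally, represented by some $u\in\mu^{A'}(R')$ with trivial image in $\mu^B/\mu^A$, i.e. (after a further fppf refinement $R''$) with $u\in\mu^A(R'')$ inside $\mu^B(R'')$; then $u$ and $u^{-1}$ both lie in $(R''\otimes_kA)\cap(R''\otimes_kA')=R''\otimes_k(A\cap A')$, so $u\in\mu^{A\cap A'}(R'')$ and the point was already trivial. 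This is the one place that needs care: the triviality must be established for the kernel as a group scheme, not merely on geometric points, and the exactness of $-\otimes_kR$ over $k$ together with the passage to the fppf quotient is exactly what supplies it.

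For (iv): for a $K$-algebra $R$ one has a natural isomorphism $R\otimes_kB\cong R\otimes_K(K\otimes_kB)$, so $\mu^B_K\cong\mu^{B\otimes_kK}$ as $K$-group schemes, and likewise for $A$; under these identifications the inclusion $\mu^A_K\hookrightarrow\mu^B_K$ becomes $\mu^{A\otimes_kK}\hookrightarrow\mu^{B\otimes_kK}$ (using $A\otimes_kK\hookrightarrow B\otimes_kK$, which holds as $K/k$ is flat). Since formation of the fppf-quotient commutes with base change, $\mu^{B/A}_K\cong\mu^B_K/\mu^A_K\cong\mu^{B\otimes_kK}/\mu^{A\otimes_kK}=\mu^{(B\otimes_kK)/(A\otimes_kK)}$.
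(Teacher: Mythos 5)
Your proof is correct, and parts (ii) and (iv) essentially coincide with the paper's. The interesting differences are in (i) and (iii). For (i), the paper reduces to $\bar{k}$-points (surjectivity there suffices for an epimorphism of smooth algebraic groups) and then to the local case, where $A^* \cong k^* \times (1+\fm)$ makes surjectivity visible; you instead prove surjectivity on $R$-points for \emph{every} $k$-algebra $R$ by lifting units along the nilpotent ideal $R \otimes_k I$. Your version is slightly stronger (surjectivity of presheaves, not just of fppf sheaves) and avoids the passage to $\bar{k}$, at the cost of the small extra step of decomposing a general ideal of an artinian algebra into local factors. For (iii), the paper checks that $\iota$ is injective on Lie algebras and on $\bar{k}$-points, which forces the kernel to be trivial; you show directly that the scheme-theoretic kernel of $\iota$ is trivial as an fppf sheaf, the key input being the identity $R\otimes_k(A\cap A')=(R\otimes_kA)\cap(R\otimes_kA')$ coming from flatness of $R$ over the field $k$ --- this is exactly the sheaf-theoretic counterpart of the paper's equality $(A\cap A')^*=A^*\cap A'^*$, upgraded so that no separate Lie-algebra check is needed. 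Both arguments then appeal to the same standard fact that a homomorphism of affine algebraic groups over a field with trivial kernel is a closed immersion, so your route is a legitimate and self-contained alternative.
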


\begin{proof}
(i) To show that $\gamma$ is an epimorphism, it suffices to check
that the induced map $\mu^A(\bar{k}) \to \mu^{A/I}(\bar{k})$ is 
surjective, since $\mu^A$ and $\mu^{A/I}$ are algebraic groups.
Thus, we may assume that $k$ is algebraically closed; also, we may
reduce to the case that $A$ is local. Then its maximal ideal
$\fm$ is nilpotent, and $A^* \cong k^* \times (1 + \fm)$ while
$(A/I)^* \cong k^* \times (1 + \fm/I)$. So the map 
$A^* \to (A/I)^*$ is surjective as desired. The assertion on 
$\Kern(\gamma)$ is obvious.

(ii) By (i), we have a commutative diagram of exact sequences
\[ \CD
0 @>>> 1 + I @>>> \mu^A @>{\gamma_A}>> \mu^{A/I} @>>> 0 \\
& & @V{\id}VV @VVV @VVV \\
0 @>>> 1 + I @>>> \mu^B @>{\gamma_B}>> \mu^{B/I} @>>> 0 \\
\endCD \]
which yields the assertion.

(iii) Clearly, $\iota$ induces an injective morphism on Lie 
algebras. Arguing as in the proof of (i), it suffices to show that
$\iota$ is also injective on $\bar{k}$-points. But this follows
from the equality $(A \cap A')^* = A^* \cap A'^*$.

(iv) Since exact sequences of group schemes are preserved by field
extensions, it suffices to show that 
$\mu^A_K = \mu^{A \otimes_k K}$,
where the right-hand side is understood as a $K$-group scheme. 
Let $R$ be a $K$-algebra; then
$\mu^A_K(R) = \mu^A(R) = (A \otimes_k R)^* = 
(A \otimes_k K \otimes_K R)^* = \mu^{A \otimes_k K}(R)$.
\end{proof}

\begin{lemma}\label{lem:red}
Let $A \subset B$ be algebras, $I$ (resp. $J$) the nilradical
of $A$ (resp. $B$), and set $A_\red := A/I$, $B_\red := B/J$. 

\smallskip

\noindent
{\rm (i)} $A_\red \subset B_\red$ and we have an exact sequence 
of algebraic groups
\begin{equation}\label{eqn:red}
0 \longrightarrow (1 + J)/(1 + I) \longrightarrow \mu^{B/A}
\longrightarrow \mu^{B_\red/A_\red} \longrightarrow 0.
\end{equation}

\smallskip

\noindent
{\rm (ii)} Let $A_\sep \subset A_\red$ be the largest separable
subalgebra, and define likewise $B_\sep$. Then 
$A_\sep = A_\red \cap B_\sep$ and the homomorphism
$\iota : \mu^{B_\sep/A_\sep} \to \mu^{B_\red/A_\red}$ is a closed 
immersion. Moreover, the exact sequence (\ref{eqn:red})
splits canonically over $\mu^{B_\sep/A_\sep}$.

\smallskip

\noindent
{\rm (iii)} $(1 + J)/(1 + I)$ has a composition series with 
subquotients $\bG_a$.
\end{lemma}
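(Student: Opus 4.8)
The plan is to transport the $J$-adic filtration of the unipotent group $1+J$ to the quotient $H := (1+J)/(1+I)$ and to identify the graded pieces with vector groups. Fix an integer $N$ with $J^N = 0$ (possible since $J$ is the nilradical of the artinian algebra $B$), and for $1 \le i \le N$ consider the subset $1 + (J^i + I)$ of $\mu^B$. Since $I \subseteq J$ (the nilradical of $A$ is contained in that of $B$), we have $J^i + I \subseteq J$, so every element of $J^i + I$ is nilpotent and $1 + (J^i+I)$ consists of units; moreover $(J^i+I)^2 \subseteq J^{i+1} + I \subseteq J^i + I$ (using $J^i I, I J^i \subseteq J^{i+1}$ and $I^2 \subseteq I$), which shows that $1 + (J^i+I)$ is a closed subgroup scheme of $1+J$, isomorphic as a scheme to the affine space on the vector space $J^i+I$. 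Let $q : 1+J \to H$ be the quotient map; since $\Kern(q) = 1+I \subseteq 1+(J^i+I)$, the homomorphic image $H_i := q(1+(J^i+I))$ is a closed subgroup of $H$ isomorphic to $(1+(J^i+I))/(1+I)$. As $J + I = J$ and $J^N + I = I$, this gives a finite chain $H = H_1 \supseteq H_2 \supseteq \cdots \supseteq H_N = 1$.

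The next step is to identify each successive quotient $H_i/H_{i+1}$. By the third isomorphism theorem it is isomorphic to $(1+(J^i+I))/(1+(J^{i+1}+I))$. Because $(J^i+I)^2 \subseteq J^{i+1}+I$, the rule $1 + x \mapsto x \bmod (J^{i+1}+I)$ defines a homomorphism of group schemes from $1+(J^i+I)$ to the vector group associated with the finite-dimensional $k$-vector space $(J^i+I)/(J^{i+1}+I)$; it is visibly an epimorphism, and its kernel is exactly $1 + (J^{i+1}+I)$. Hence each $H_i/H_{i+1}$ is a vector group.

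Finally, a vector group over the field $k$ admits, after choosing a complete flag of $k$-subspaces, a composition series whose successive quotients are all isomorphic to $\bG_a$; refining the chain $H = H_1 \supseteq \cdots \supseteq H_N = 1$ by such composition series of the $H_i/H_{i+1}$ produces the desired composition series of $(1+J)/(1+I)$. I expect the only delicate point to be the bookkeeping of the products $J^i I$, $I J^i$, $I^2$ that guarantees both that $1+(J^i+I)$ is a subgroup scheme and that the logarithm map above is a well-defined homomorphism with the stated kernel. It is worth stressing that in characteristic $p$ one cannot simply invoke ``smooth connected unipotent $\Rightarrow$ $\bG_a$-filtered'', so the explicit use of the $J$-adic filtration — which reduces everything to the square-zero situation, where $1 + (\text{ideal})$ is literally a vector group — is essential.
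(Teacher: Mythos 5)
Your proposal proves only part (iii) of the lemma; parts (i) and (ii) are not addressed anywhere, and they do not follow formally from what you establish. For (i) one must check that $A \cap J = I$ (whence $A_\red \to B_\red$ is injective) and then obtain the exact sequence (\ref{eqn:red}) by comparing the two exact sequences $0 \to 1+I \to \mu^A \to \mu^{A_\red} \to 0$ and $0 \to 1+J \to \mu^B \to \mu^{B_\red} \to 0$ furnished by Lemma \ref{lem:alg} (i). Part (ii) is the substantive omission: the equality $A_\sep = A_\red \cap B_\sep$ uses that every subalgebra of a separable algebra is separable; the closed-immersion claim then follows from Lemma \ref{lem:alg} (iii); and the canonical splitting of (\ref{eqn:red}) over $\mu^{B_\sep/A_\sep}$ requires a genuine input, namely the Wedderburn--Malcev theorem. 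Letting $B' \subset B$ be the preimage of $B_\sep$ and $A' = A \cap B'$ that of $A_\sep$, the extension of algebras $0 \to J \to B' \to B_\sep \to 0$ splits \emph{uniquely}, so $B' = B_\sep \oplus J \supset A_\sep \oplus I = A'$, and it is this uniqueness that makes the induced section of $\mu^{B_\sep/A_\sep} \to \mu^{B_\red/A_\red}$ compatible with the $A$-side and hence canonical. None of this appears in your write-up, so as a proof of the lemma as stated it is incomplete.

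For part (iii) itself your argument is correct and is essentially the one in the paper: both filter $(1+J)/(1+I)$ by the images of the subgroups $1+(J^m+I)$, use the inclusion $(J^m+I)^2 \subseteq J^{m+1}+I$ to reduce to the square-zero situation where $1+(\text{ideal})$ is literally a vector group, and then refine by a complete flag of subspaces. The paper packages the same reduction through the chain of subalgebras $k \oplus (I+J^m)$, the exact sequence (\ref{eqn:chain}), and Lemma \ref{lem:alg} (ii), but the content is identical. Your closing remark is also well taken: over a non-perfect field a smooth connected unipotent group need not admit a $\bG_a$-composition series (it may be wound, cf.\ Example \ref{ex:def} (iv)), so the splitness of $(1+J)/(1+I)$ really does have to be exhibited by hand as you do.
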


\begin{proof}
(i) Since $A \cap J = I$, the map $A_\red \to B_\red$ is injective.
Moreover, by Lemma \ref{lem:alg} (i), we have a commutative diagram 
of exact sequences
\[ \CD
0 @>>> 1 + I @>>> \mu^A @>>> \mu^{A_\red} @>>> 0 \\
& & @VVV @VVV @VVV \\
0 @>>> 1 + J @>>> \mu^B @>>> \mu^{B_\red} @>>> 0. \\
\endCD \]
This yields the exact sequence (\ref{eqn:red}).

(ii) Clearly, $A_\sep \subset A_\red \cap B_\sep$; also, the 
opposite inclusion holds since every subalgebra of a separable
algebra is separable. This yields the desired equality,
and in turn the assertion on $\iota$ in view of Lemma 
\ref{lem:alg} (iii).

Denote by $B' \subset B$ 
the preimage of $B_\sep$ and define $A'\subset A$ similarly;
then $A' = A \cap B'$. By a special case of the Wedderburn-Malcev 
theorem (see e.g. \cite[Thm.~(72.19)]{Curtis-Reiner}),
the exact sequence of algebras $0 \to J \to B' \to B_\sep \to 0$ 
has a unique splitting. Thus, 
$B' = B_\sep \oplus J \supset A_\sep \oplus I = A'$. This yields
compatible splittings in the exact sequences
\[ \CD
0 @>>> 1 + I @>>> \mu^{A'} @>>> \mu^{A_\sep} @>>> 0 \\
& & @VVV @VVV @VVV \\
0 @>>> 1 + J @>>> \mu^{B'} @>>> \mu^{B_\sep} @>>> 0, \\
\endCD \]
and hence the desired splitting.

(iii) We may replace $A$ (resp. $B$) with its subalgebra 
$k \oplus I$ (resp. $k \oplus J$), and hence assume that 
$A,B$ are local with residue field $k$. Then the subspaces
$B_m := k \oplus (I + J^m)$, where $m \geq 1$, form a decreasing
sequence of subalgebras of $B$, with $B_1 = B$ and $B_m = A$ for 
$m \gg 0$. Using the exact sequence (\ref{eqn:chain}) and the
inclusion $(I + J^m)^2 \subset I + J^{m+1}$, we may thus assume
that $J^2 \subset I$. Then $I$ is an ideal of $J$, and hence we
may further assume that $I = 0$ by using Lemma \ref{lem:alg} (ii). 
In that case, $(1 + J)/(1 + I) = 1 + J$ is a vector group, since
$J^2 = 0$.
\end{proof}

\begin{lemma}\label{lem:ser}
Let $A \subset B$ be reduced algebras and write
$A = \prod_{i=1}^m K_i$, $B = \prod_{j=1}^n L_j$, where $K_i,L_j$ are 
fields. Then $\mu^{B/A}$ has a composition series with subquotients
$\mu^{L_j/K_i}$ (where $K_i \hookrightarrow L_j$) and possibly 
$\mu^{K_i}$. Moreover, all the $\mu^{L_j/K_i}$ occur with 
multiplicity $1$.
\end{lemma}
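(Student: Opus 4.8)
The plan is to reduce to the case where $A$ is a field by decomposing $A\subset B$ into ``blocks'', and then to write down an explicit chain of intermediate algebras.

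First I would analyze $A\subset B$ via idempotents. If $e_1,\dots,e_m$ are the primitive idempotents of $A=\prod_{i=1}^m K_i$, their images in $B=\prod_{j=1}^n L_j$ are orthogonal idempotents summing to $1$; since an idempotent of $\prod_j L_j$ is just the indicator of a subset of $\{1,\dots,n\}$, this gives a map $\sigma\colon\{1,\dots,n\}\to\{1,\dots,m\}$, which is surjective because $A\to B$ is injective (otherwise some $e_i$ maps to $0$). Putting $B_i:=\prod_{\sigma(j)=i}L_j$ we get $B=\prod_{i=1}^m B_i$, and $A\subset B$ is the product over $i$ of the induced inclusions $K_i\hookrightarrow B_i$, each component $K_i\to L_j$ (for $\sigma(j)=i$) being a field embedding $\phi_j$. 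Hence $\mu^{B/A}\cong\prod_{i=1}^m\mu^{B_i/K_i}$, so it is enough to treat one block, i.e. to assume $m=1$: $A=K$ a field, $B=\prod_{j=1}^n L_j$ with field extensions $\phi_j\colon K\hookrightarrow L_j$, and $A\hookrightarrow B$ the map $a\mapsto(\phi_1(a),\dots,\phi_n(a))$.

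Next I would build the chain. Set $C_0:=\prod_{j=1}^n\phi_j(K)\subset B$, so that $A=K$ sits diagonally in $C_0$; for $1\le l\le n$ let $C_l\subset B$ have $j$-th component $L_j$ for $j\le l$ and $\phi_j(K)$ for $j>l$. Then $A\subset C_0\subset C_1\subset\dots\subset C_n=B$. Since only the $l$-th component changes from $C_{l-1}$ to $C_l$, one gets $\mu^{C_l/C_{l-1}}\cong\mu^{L_l}/\mu^{\phi_l(K)}\cong\mu^{L_l/K}$ for $l\ge1$, while $\mu^{C_0/A}=\mu^{C_0}/\mu^A\cong(\mu^K)^n/\Delta(\mu^K)\cong(\mu^K)^{n-1}$. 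Applying the exact sequence (\ref{eqn:chain}) repeatedly along $C_0\subset\dots\subset C_n$, and once more for $A\subset C_0\subset B$, shows that $\mu^{B/A}$ has a filtration whose subquotients are $n-1$ copies of $\mu^K$ together with $\mu^{L_1/K},\dots,\mu^{L_n/K}$, each appearing exactly once.

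Finally I would reassemble: concatenating these block filtrations along $\mu^{B/A}\cong\prod_i\mu^{B_i/K_i}$ gives a filtration of $\mu^{B/A}$ with one subquotient $\mu^{L_j/K_{\sigma(j)}}$ for each $j$ (with the embedding induced by $A\subset B$) and $|\sigma^{-1}(i)|-1$ copies of $\mu^{K_i}$ for each $i$; since $|\sigma^{-1}(i)|-1$ can be $0$, the groups $\mu^{K_i}$ only ``possibly'' occur, while every $\mu^{L_j/K_i}$ has multiplicity $1$. I expect the only real work to be the first step — verifying that the idempotent decomposition makes $A\subset B$ block-diagonal with field embeddings on the components — and the componentwise check that each step of the chain has the stated subquotient; beyond this, only (\ref{eqn:chain}), the identity $\mu^{\prod C_l}\cong\prod\mu^{C_l}$, and the elementary isomorphism $(\mu^K)^n/\Delta(\mu^K)\cong(\mu^K)^{n-1}$ are needed.
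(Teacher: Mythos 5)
Your proposal is correct and follows essentially the same route as the paper: reduce to the case $A=K$ a field via the primitive idempotents of $A$, insert the intermediate algebra $K^n=\prod_j\phi_j(K)$ between $K$ and $\prod_j L_j$, and use the exact sequence (\ref{eqn:chain}) to split off the subquotients $\mu^{L_j/K}$ (each once) and $n-1$ copies of $\mu^{K}$. The only cosmetic difference is that the paper refines $K\subset K^n$ through the chain $K\subset K^2\subset\cdots\subset K^n$ rather than invoking $(\mu^K)^n/\Delta(\mu^K)\cong(\mu^K)^{n-1}$ directly, which amounts to the same computation.
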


\begin{proof}
Let $e_1,\ldots, e_m$ be the primitive idempotents of $A$. Then 
\[ A = \prod_{i=1}^m K_i = \prod_{i=1}^m A e_i \subset
\prod_{i=1}^m B e_i = B, \]
and each $B e_i$ is a subalgebra of $B$. Thus,
$\mu^{B/A} = \prod_{i=1}^m \mu^{Be_i/Ae_i}$, and hence we may assume
that $A$ is a field, say $K$. Then 
$K \subset K^n \subset \prod_{j=1}^n L_j = B$, so that 
(\ref{eqn:chain}) yields an exact sequence
\[ 0 \longrightarrow \mu^{K^n/K} \longrightarrow \mu^{B/A}
\longrightarrow \prod_{j=1}^n \mu^{L_j/K} \longrightarrow 0. \]
We may factor the diagonal inclusion $K \subset K^n$ as
$K \subset K^2 \subset \cdots \subset K^n$, where each
$K^i$ is embedded in $K^{i+1}$ via 
$(x_1,\ldots,x_i) \mapsto (x_1,\ldots, x_i,x_i)$. Thus,
$\mu^{K^n/K}$ has a composition series with subquotients 
$\mu^{K^{i+1}/K^i}$. Moreover, the map
\[ \mu^{K^{i+1}} = (\mu^K)^{i+1} \longrightarrow \mu^K, \quad
(x_1,\ldots, x_{i+1}) \longmapsto x_i x_{i+1}^{-1} \]
is an epimorphism with kernel $\mu^{K^i}$, and hence yields 
an isomorphism $\mu^{K^{i+1}/K^i} \cong \mu^K$.
\end{proof}

\subsection{Tori}
\label{subsec:tor}

We keep the notation of Subsection \ref{subsec:def}.
We first record the following observation, probably well-known 
but that we could not locate in the literature: 

\begin{lemma}\label{lem:nil}
Let $K/k$ be a finite extension of fields and denote by $K_\sep$ 
the separable closure of $k$ in $K$. Then $K_\sep \otimes_k \bar{k}$ 
is the largest reduced subalgebra of $K \otimes_k \bar{k}$. 

In particular, the nilradical of $K \otimes_k \bar{k}$ has 
dimension $[K:k] - [K_\sep:k]$ as a $\bar{k}$-vector space; moreover,
$\mu^{K_\sep}$ is the maximal torus of $\mu^K$.
\end{lemma}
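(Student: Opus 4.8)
The plan is to prove the two displayed assertions in turn, reducing everything to the structure of the finite extension $K/k$.

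\textbf{Step 1: $K_\sep \otimes_k \bar{k}$ is the largest reduced subalgebra of $K \otimes_k \bar{k}$.}
First I would recall that, since $K_\sep/k$ is finite separable, the $\bar k$-algebra $K_\sep \otimes_k \bar{k}$ is reduced (indeed isomorphic to $\bar k^{[K_\sep:k]}$, by e.g. the primitive element theorem). Next, the extension $K/K_\sep$ is purely inseparable, so I would use the standard fact that $K \otimes_{K_\sep} \bar K_\sep$ (equivalently, after the flat base change $K_\sep \to \bar k$) is a local ring with nilpotent maximal ideal; concretely, writing $\bar{k} \otimes_k K \cong \bigoplus \bar k \otimes_{K_\sep} K$ over the finitely many embeddings $K_\sep \hookrightarrow \bar k$, each summand $\bar k \otimes_{K_\sep} K$ is local Artinian with residue field $\bar k$. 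Then $\bar k \otimes_k K_\sep$, sitting inside $\bar k \otimes_k K$ diagonally over these summands, maps isomorphically onto the product of the residue fields, i.e. onto $(\bar k \otimes_k K)_\red$. Since a reduced subalgebra must inject into the reduction, $\bar k \otimes_k K_\sep$ contains every reduced subalgebra, and being itself reduced, it is the largest one.

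\textbf{Step 2: the dimension of the nilradical.}
This is then immediate: $\dim_{\bar k}(K \otimes_k \bar k) = [K:k]$ and $\dim_{\bar k}((K \otimes_k \bar k)_\red) = \dim_{\bar k}(K_\sep \otimes_k \bar k) = [K_\sep : k]$, so the nilradical has dimension $[K:k] - [K_\sep:k]$.

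\textbf{Step 3: $\mu^{K_\sep}$ is the maximal torus of $\mu^K$.}
By Lemma~\ref{lem:alg}(iv), it is enough to identify the maximal torus after base change to $\bar k$, so set $\bar A := K \otimes_k \bar k$ with nilradical $N$ and $\bar A_\red = K_\sep \otimes_k \bar k \cong \bar k^{[K_\sep:k]}$. By Lemma~\ref{lem:alg}(i) we have an exact sequence $0 \to 1 + N \to \mu^{\bar A} \to \mu^{\bar A_\red} \to 0$; here $\mu^{\bar A_\red} \cong \bG_m^{[K_\sep:k]}$ is a torus, $1+N$ is unipotent (it has a composition series with $\bG_a$ subquotients by the reduced case of Lemma~\ref{lem:red}(iii), or directly since $N$ is nilpotent), and $\mu^{K_\sep} \otimes_k \bar k = \mu^{\bar A_\red}$ maps isomorphically onto the torus quotient via the Wedderburn--Malcev splitting of Lemma~\ref{lem:red}(ii) (the reduced, separable case: $\bar A = \bar A_\red \oplus N$ as algebras since $\bar A$ is a product of local rings with residue field $\bar k$). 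Thus $\mu^{\bar A}$ is the semidirect (in fact, since everything is commutative, direct) product of the torus $\mu^{\bar A_\red}$ and the unipotent group $1+N$; its maximal torus is therefore $\mu^{\bar A_\red} = \mu^{K_\sep} \otimes_k \bar k$, which descends to the assertion that $\mu^{K_\sep}$ is the maximal torus of $\mu^K$. (That $\mu^{K_\sep}$ is a torus was already noted in Examples~\ref{ex:def}(i)--(ii).)

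\textbf{Main obstacle.} The only genuinely non-formal point is Step~1, the claim that $K_\sep \otimes_k \bar k$ is the \emph{largest} reduced subalgebra — equivalently, that no nilpotents are introduced by the separable part and all of them come from the inseparable part. The key input is that $\bar k \otimes_{K_\sep} K$ is local with nilpotent maximal ideal, which is exactly where purely inseparable descent is used; once that is in hand the rest is bookkeeping with idempotents, and Steps~2--3 are essentially corollaries of the lemmas already established.
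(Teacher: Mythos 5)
Your route is genuinely different from the paper's: the paper works entirely with the Frobenius, observing that $x^{p^n}\in K_\sep\otimes_k\bar k$ for all $x\in K\otimes_k\bar k$ and $n\gg 0$, and deduces directly the splitting $K\otimes_k\bar k=(K_\sep\otimes_k\bar k)\oplus I$ with $I$ the nilradical; you instead decompose $K\otimes_k\bar k$ into local Artinian factors indexed by the embeddings $K_\sep\hookrightarrow\bar k$ and identify $K_\sep\otimes_k\bar k$ with the product of residue fields. Both work, and your decomposition is arguably more structural (it also makes Step~3 transparent), while the paper's Frobenius computation is shorter and handles characteristic $0$ and $p$ uniformly without invoking purely inseparable descent.

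There is, however, one inference in Step~1 that does not hold as stated: from ``any reduced subalgebra $A$ injects into the reduction'' and ``$K_\sep\otimes_k\bar k$ maps isomorphically onto the reduction'' you conclude that $A\subseteq K_\sep\otimes_k\bar k$. An injection $A\hookrightarrow (K\otimes_k\bar k)_\red\cong K_\sep\otimes_k\bar k$ only identifies $A$ with \emph{some} subalgebra of $K_\sep\otimes_k\bar k$; it does not by itself place $A$ inside the chosen complement to the nilradical. The statement is nonetheless true, and the fix is the ``bookkeeping with idempotents'' you allude to at the end: a reduced finite $\bar k$-subalgebra is a product of copies of $\bar k$, hence is spanned by its idempotents, and every idempotent of a finite product of local rings has coordinates $0$ or $1$, so lies in $\prod_\sigma\bar k\cdot 1=K_\sep\otimes_k\bar k$. (Alternatively, the paper's Frobenius argument closes the same gap: writing $x=r+n$ with $r\in K_\sep\otimes_k\bar k$ and $n$ nilpotent, any $x$ in a reduced subalgebra containing $r$ forces $n=0$.) With that one sentence supplied, Steps~2 and~3 are correct; in Step~3 your descent from $\bar k$ to $k$ via Lemma~\ref{lem:alg}~(iv) is exactly what the paper does.
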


\begin{proof}
We have an isomorphism of $\bar{k}$-algebras
$K_\sep \otimes_k \bar{k} \cong \prod_{i=1}^m \bar{k}$, where 
$m := [K_\sep:k]$. Also, we may assume that $k$ has characteristic 
$p > 0$ (since there is nothing to prove in characteristic $0$). 
Then $x^{p^n} \in K_\sep$ for $n \gg 0$ and all $x \in K$. Thus, 
$x^{p^n} \in K_\sep \otimes_k \bar{k}$ for $n \gg 0$ and all
$x \in K \otimes_k \bar{k}$. It follows that
$K \otimes_k \bar{k} = (K_\sep \otimes_k \bar{k}) \oplus I$,
where $x^{p^n} = 0$ for $n \gg 0$ and all $x \in I$. 
This yields the assertions on $K_\sep \otimes_k \bar{k}$ 
and on the nilradical of $K \otimes_k \bar{k}$. 
As a consequence, $\mu^{K_\sep \otimes_k \bar{k}}$ is the maximal 
torus of $\mu^{K \otimes_k \bar{k}}$; the assertion on $\mu^{K_\sep}$ 
follows in view of Lemma \ref{lem:alg} (iv).
\end{proof}

We may now describe the maximal tori of relative unit groups:

\begin {proposition}\label{prop:max}
Let $A \subset B$ be algebras, $I \subset J$ their nilradicals,
$A_\red := A/I \subset B/J =: B_\red$ the associated quotients, and 
$A_\sep \subset B_\sep$ the largest separable subalgebras of these
quotients. 

\smallskip

\noindent
{\rm (i)} $\mu^{B_\sep/A_\sep}$ is the maximal torus of $\mu^{B/A}$.

\smallskip

\noindent
{\rm (ii)} If $B_\red = B_\sep$ (and hence $A_\red = A_\sep$; this 
holds e.g. if $k$ is perfect), then 
\[ \mu^{B/A} \cong (1 + J)/(1 + I) \times \mu^{B_\red/A_\red}, \]
where $(1 + J)/ (1 + I)$ is unipotent and $\mu^{B_\red/A_\red}$ 
is a torus.
\end{proposition}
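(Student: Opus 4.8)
The plan is to deduce both parts from the structural results already assembled in Lemmas \ref{lem:red} and \ref{lem:nil}, together with the two exact sequences (\ref{eqn:chain}) and (\ref{eqn:red}). For part (i), recall from Lemma \ref{lem:red}(i) the exact sequence $0 \to (1+J)/(1+I) \to \mu^{B/A} \to \mu^{B_\red/A_\red} \to 0$, and from Lemma \ref{lem:red}(iii) that $(1+J)/(1+I)$ has a composition series with subquotients $\bG_a$, hence is unipotent. Therefore the maximal torus of $\mu^{B/A}$ maps isomorphically onto the maximal torus of $\mu^{B_\red/A_\red}$, and it suffices to treat the reduced case. So I would reduce to showing: if $A \subset B$ are reduced algebras with largest separable subalgebras $A_\sep \subset B_\sep$, then $\mu^{B_\sep/A_\sep}$ is the maximal torus of $\mu^{B/A}$. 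Here I would invoke Lemma \ref{lem:ser}: $\mu^{B/A}$ has a composition series whose subquotients are among the $\mu^{L_j/K_i}$ (with $K_i \hookrightarrow L_j$) and the $\mu^{K_i}$. Each $\mu^{K_i}$ is a torus, with maximal subtorus $\mu^{(K_i)_\sep}$ by Lemma \ref{lem:nil}; and for $\mu^{L_j/K_i}$, using the chain $K_i \subset L_j$ one gets from (\ref{eqn:chain}) and Lemma \ref{lem:nil} that its maximal torus is $\mu^{(L_j)_\sep/(K_i)_\sep}$ — more precisely one compares $0 \to \mu^{L_j/(L_j)_\sep} \cdot(\text{stuff}) \to \dots$; I would phrase this as: the maximal torus of $\mu^{L_j}$ is $\mu^{(L_j)_\sep}$, the maximal torus of $\mu^{K_i}$ is $\mu^{(K_i)_\sep}$, and since $\mu^{L_j/K_i} = \mu^{L_j}/\mu^{K_i}$ the image of the maximal torus of $\mu^{L_j}$ is a torus of $\mu^{L_j/K_i}$ of the right dimension, while the full group modulo this torus is unipotent (its Lie algebra being a quotient of the nilradical contribution). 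Assembling the composition series, the maximal torus of $\mu^{B/A}$ is then $\mu^{B_\sep/A_\sep}$ by a dimension count plus the closed immersion statement of Lemma \ref{lem:red}(ii).

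Actually, the cleanest route for (i) avoids the composition-series bookkeeping: having reduced to the reduced case, I would use Lemma \ref{lem:red}(ii) directly. It tells us that the maximal separable subalgebras satisfy $A_\sep = A_\red \cap B_\sep$, that $\iota : \mu^{B_\sep/A_\sep} \to \mu^{B_\red/A_\red}$ is a closed immersion, and — crucially — that (\ref{eqn:red}) splits canonically over $\mu^{B_\sep/A_\sep}$. Since $\mu^{B_\sep/A_\sep}$ is a torus (Example \ref{ex:def}(ii) applied to the separable algebras $A_\sep \subset B_\sep$, or rather Lemma \ref{lem:ser} with all $L_j, K_i$ separable so that all subquotients $\mu^{L_j/K_i}$ and $\mu^{K_i}$ are tori), it is contained in the maximal torus of $\mu^{B/A}$. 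For the reverse inclusion I must show the quotient of $\mu^{B/A}$ by $\mu^{B_\sep/A_\sep}$ is unipotent. Chasing (\ref{eqn:red}) and the splitting, this quotient is an extension of $\mu^{B_\red/A_\red}/\mu^{B_\sep/A_\sep}$ by $(1+J)/(1+I)$; the latter is unipotent by Lemma \ref{lem:red}(iii), so it remains to see $\mu^{B_\red/A_\red}/\mu^{B_\sep/A_\sep}$ is unipotent. That is exactly the assertion that for reduced $A \subset B$, $\mu^{B_\sep/A_\sep}$ contains the maximal torus — so I have only shifted the problem to the reduced case, which I now settle by the product decomposition over primitive idempotents of $A_\red$ (as in the proof of Lemma \ref{lem:ser}) reducing to $A_\red = K$ a field, then the chain $K \subset K^n \subset B$, then Lemma \ref{lem:nil} field-by-field: $\mu^{L_j}/\mu^{(L_j)_\sep}$ is unipotent since its Lie algebra $L_j/(L_j)_\sep$ carries a nilpotent (indeed zero-bracket) structure and it has a filtration with $\bG_a$ quotients coming from the nilradical of $L_j \otimes_k \bar k$.

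For part (ii), the hypothesis $B_\red = B_\sep$ forces $A_\red = A_\sep$ as well (since $A_\red \subseteq B_\red = B_\sep$ and every subalgebra of a separable algebra is separable, so $A_\red$ is separable, i.e. $A_\red = A_\sep$). Then (\ref{eqn:red}) reads $0 \to (1+J)/(1+I) \to \mu^{B/A} \to \mu^{B_\red/A_\red} \to 0$ with $\mu^{B_\red/A_\red} = \mu^{B_\sep/A_\sep}$ a torus, and by Lemma \ref{lem:red}(ii) this sequence splits canonically over $\mu^{B_\sep/A_\sep} = \mu^{B_\red/A_\red}$ — i.e. it splits outright. Since $(1+J)/(1+I)$ is unipotent (Lemma \ref{lem:red}(iii)) and the complement $\mu^{B_\red/A_\red}$ is a torus, this gives the asserted direct product decomposition $\mu^{B/A} \cong (1+J)/(1+I) \times \mu^{B_\red/A_\red}$. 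The parenthetical remark that $k$ perfect implies $B_\red = B_\sep$ is immediate: over a perfect field every reduced finite algebra is a product of fields, each a separable (indeed finite) extension. The main obstacle is really the unipotence of $\mu^{K}/\mu^{K_\sep}$ for an inseparable field extension $K/k$ — everything else is assembly of already-proved exact sequences — but that is precisely the content packaged in Lemma \ref{lem:nil} combined with Lemma \ref{lem:red}(iii) (the nilradical of $K\otimes_k\bar k$ gives the $\bG_a$-filtration), so with those lemmas in hand the proof is a short diagram chase.
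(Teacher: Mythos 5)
Your argument is correct, but it is organized rather differently from the paper's proof, which is three lines long: the paper invokes the exactness of maximal tori in an exact sequence of connected algebraic groups, applied to $0 \to \mu^A \to \mu^B \to \mu^{B/A} \to 0$; this reduces (i) at once to the \emph{absolute} statement that $\mu^{B_\sep}$ is the maximal torus of $\mu^B$, which follows from Lemma \ref{lem:alg}(i) (to pass to $B_\red$) and Lemma \ref{lem:nil}. You instead work with the relative group throughout: you exhibit $\mu^{B_\sep/A_\sep}$ as a subtorus of $\mu^{B/A}$ via the Wedderburn--Malcev splitting of Lemma \ref{lem:red}(ii) and then verify that the quotient is unipotent, ultimately again via Lemma \ref{lem:nil} after reducing over primitive idempotents. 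What your route buys is that you never need the surjectivity half of the ``exact sequence of maximal tori'' fact --- only that a connected commutative affine group modulo its maximal torus is unipotent, and that a subtorus with unipotent quotient is the maximal torus; what it costs is the extra bookkeeping. Part (ii) is handled the same way in both proofs (the splitting of Lemma \ref{lem:red}(ii) yields the product decomposition). Two editorial points: your first paragraph for (i), with the composition-series sketch and the unfinished placeholder, is superseded by your second paragraph and should be deleted; and the aside there that a quotient is unipotent because ``its Lie algebra is a quotient of the nilradical contribution'' is not a proof of unipotence in characteristic $p$ (tori also have abelian Lie algebras) --- the correct justification, which your final version does give, is Lemma \ref{lem:nil} combined with the structure theory of connected commutative affine groups.
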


\begin{proof}
(i) Given an exact sequence of connected algebraic groups
$0 \to G_1 \to G \to G_2 \to 0$, the sequence of maximal
tori $0 \to T(G_1) \to T(G) \to T(G_2) \to 0$ is exact as well.
Thus, it suffices to show that $\mu^{B_\sep}$ is the maximal torus
of $\mu^B$. For this, we may assume that $B$ is reduced, in view 
of Lemma \ref{lem:alg} (i). Then $B$ is a direct product of fields, 
and we conclude by Lemma \ref{lem:nil}.

(ii) follows from (i) in view of Lemma \ref{lem:red}. 
\end{proof}

\begin{remark}\label{rem:stab}
With the notation of the above proposition, the maximal torus
$T$ of $\mu^{B/A}$ sits in an exact sequence
$0 \to \mu^{A_\sep} \to \mu^{B_\sep} \to T \to 0$. 
Also, $\mu^{A_\sep}$, $\mu^{B_\sep}$ are quasi-split tori, as seen in 
Example \ref{ex:def} (i). By
\cite[Chap.~2, \S 4.7, Thm.~2]{Voskresenskii}, it follows that $T$ 
is stably rational (this is a restrictive condition on tori, e.g., 
if the group $\bZ/2 \bZ \times \bZ/2\bZ$ is a quotient of 
$\Gamma$, then some tori of dimension $3$ are not stably rational; 
see \cite[Chap.~2, \S 4.10]{Voskresenskii}). We do not know whether 
all stably rational tori can be obtained as relative unit groups.
\end{remark}

\begin{remark}\label{rem:real}
Every connected affine algebraic group $G$ over the field $\bR$ 
of real numbers is the Picard variety of some projective variety.
Indeed, $G \cong V \times T$, where $V \cong \bG_{a,\bR}^m$ is 
a vector group, and $T$ a torus; moreover, by 
\cite[Chap.~4, \S 10.1]{Voskresenskii}, $T$ is isomorphic to a direct
product of copies of $\mu^{\bR}$, $\mu^{\bC}$, and $\mu^{\bC}/\mu^{\bR}$. 
Using Theorem \ref{thm:first} and Lemma \ref{lem:fib}, we reduce 
to the cases where $G = \mu^{\bC}$ or $G = \mu^{\bC}/\mu^{\bR}$. 
In the latter case, 
we may choose a smooth projective rational curve $X'$ containing 
a closed point $Y'$ with residue field $\bC$; pinching via the 
structure map $Y' \to Y := \Spec(\bR)$ yields the desired variety, 
as can be checked by arguing as in the proof of Proposition 
\ref{prop:pic}. In the former case, we replace $Y'$ with $Z'$, 
where $Z'$ is the disjoint union of $Y'$ and a closed point with 
residue field $\bR$, and pinch via the structure map again.

The same result holds for any real closed field $k$, with the same 
proof. Yet we do not know whether it extends to all connected (not 
necessarily affine) algebraic groups over $k$. The example in 
\cite[p.~505]{Onsiper} suggests a negative answer to that question.
\end{remark}

Next, we characterize those relative unit groups that are tori:

\begin{proposition}\label{prop:car}
With the notation of Proposition \ref{prop:max}, 
the following are equivalent:

\smallskip

{\rm (i)} $\mu^{B/A}$ is a torus.

\smallskip

{\rm (ii)} $I = J$ and $B_\red$ is separable over $k$ (hence
so is $A_\red$).
\end{proposition}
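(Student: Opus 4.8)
The plan is to prove the equivalence by reducing, via the structure results already established, to the case of reduced algebras, and there invoking the characterization of tori among unit group schemes.

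First I would show (ii) $\Rightarrow$ (i). If $I = J$, then $(1+J)/(1+I) = 1$, so the exact sequence (\ref{eqn:red}) gives $\mu^{B/A} \cong \mu^{B_\red/A_\red}$. If moreover $B_\red$ is separable over $k$, then $A_\red$, being a subalgebra of $B_\red$, is separable as well, so $A_\red = A_\sep$ and $B_\red = B_\sep$; by Proposition \ref{prop:max}(ii) we conclude $\mu^{B/A} \cong \mu^{B_\red/A_\red}$ is a torus (it is the quotient of the quasi-split torus $\mu^{B_\sep}$ by the quasi-split torus $\mu^{A_\sep}$, as in Example \ref{ex:def}).

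For the converse (i) $\Rightarrow$ (ii), suppose $\mu^{B/A}$ is a torus. Then it equals its own maximal torus, so by Proposition \ref{prop:max}(i) the closed immersion $\mu^{B_\sep/A_\sep} \to \mu^{B/A}$ is an isomorphism; comparing Lie algebras (dimensions $\dim_k(B_\sep/A_\sep)$ and $\dim_k(B/A)$) forces $B/A \cong B_\sep/A_\sep$ as $k$-vector spaces, hence $\dim_k B_\sep - \dim_k A_\sep = \dim_k B - \dim_k A$. Now I would combine this with the chain $A_\sep \subset A_\red \subset A$ and $B_\sep \subset B_\red \subset B$ together with $A_\sep = A_\red \cap B_\sep$ from Lemma \ref{lem:red}(ii): the codimension count forces $A_\red = A_\sep$, $B_\red = B_\sep$, and $\dim_k I = \dim_k J$. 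Since $I \subset J$, the latter gives $I = J$; since $B_\red = B_\sep$, the algebra $B_\red$ is separable over $k$. This is (ii).

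The main obstacle is the bookkeeping in the converse direction: one must be careful that the equality of Lie algebra dimensions, combined with the nesting of the separable, reduced, and full algebras on both the $A$ and $B$ sides, genuinely forces each inclusion $A_\sep \subset A_\red \subset A$ (resp. for $B$) to be an equality at the appropriate stages, rather than merely constraining a difference of dimensions. The key point making this work is that all the relevant inclusions $A_\sep \subset A_\red$, $A_\red \subset A$, $A_\sep \subset B_\sep$, etc., are inclusions of finite-dimensional $k$-vector spaces with $A = A_\red \cap B$-type compatibilities, so a single dimension identity propagates to each summand; alternatively, one can argue more directly by noting that if $I \neq J$ then (\ref{eqn:red}) exhibits a nontrivial unipotent subgroup (using Lemma \ref{lem:red}(iii)), contradicting that $\mu^{B/A}$ is a torus, and if $B_\red \neq B_\sep$ then $\mu^{B_\red/A_\red}$ contains a $k$-wound unipotent subgroup as in Example \ref{ex:def}(iv), again a contradiction. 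I would present the argument in this second, more conceptual form, as it avoids delicate dimension chases.
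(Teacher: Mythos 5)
Your (ii)$\Rightarrow$(i) direction is fine and matches the paper. The gap is in (i)$\Rightarrow$(ii), and both of your proposed routes break at the same place. The dimension chase gives $\dim_k B_\sep - \dim_k A_\sep = \dim_k B - \dim_k A$, and since $I \subset J$ and $A_\red/A_\sep \hookrightarrow B_\red/B_\sep$ (using $A_\sep = A_\red \cap B_\sep$), this does force $I = J$ and $\dim_k(B_\red/B_\sep) = \dim_k(A_\red/A_\sep)$. But the latter equality does \emph{not} force $B_\red = B_\sep$; it only forces $B_\red = A_\red + B_\sep$, i.e.\ the dimension identity is perfectly compatible with the inseparability of $B_\red$ being entirely ``inherited from'' $A_\red$. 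Your second, ``conceptual'' route asserts exactly the missing statement --- that $B_\red \neq B_\sep$ produces a nontrivial unipotent subgroup of the quotient $\mu^{B_\red/A_\red}$ --- but over $\bar{k}$ the unipotent part of that quotient has dimension $(\dim B_\red - \dim B_\sep) - (\dim A_\red - \dim A_\sep)$ by Lemma \ref{lem:nil}, which can vanish without $B_\red$ being separable; and Example \ref{ex:def} (iv) concerns $\mu^{K/k}$ with $K = k^{1/p}$, so it says nothing about the relative groups $\mu^{L/K}$ with $K \neq k$ that actually arise here.

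The real content of (i)$\Rightarrow$(ii) is to rule out this residual case, and it needs genuine field theory. The paper first reduces via Lemma \ref{lem:ser} to a single extension of fields $K \subset L$ with $\mu^{L/K}$ a torus; the nilradical count of Lemma \ref{lem:nil} then gives $[K:k]-[K_\sep:k] = [L:k]-[L_\sep:k]$, whence $L = K + L_\sep = K L_\sep$ --- which is exactly the point your dimension argument reaches and stops. The missing step is that $L_\sep$ and $K$ are linearly disjoint over $K_\sep$ (Mac Lane's criterion, since $L_\sep/K_\sep$ is separable and $K/K_\sep$ purely inseparable), so $[L:K_\sep] = [L_\sep:K_\sep]\,[K:K_\sep]$, while the sum decomposition gives $[L:K_\sep] = [K:K_\sep]+[L_\sep:K_\sep]-1$; comparing yields $([L_\sep:K_\sep]-1)([K:K_\sep]-1)=0$, i.e.\ $L = K$ or $K$ (hence $L$) is separable. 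Without this multiplicative-versus-additive degree comparison, neither of your arguments closes.
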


\begin{proof}
(i)$\Rightarrow$(ii) We must have $I = J$ by Lemma \ref{lem:red}.
In view of Lemma \ref{lem:alg} (ii), we may thus assume that $B$ 
(and hence $A$) is reduced. Write $A = \prod K_i$ and $B = \prod L_j$
as in Lemma \ref{lem:ser}. By that lemma, $\mu^{L/K}$ must be a torus
whenever $K = K_i \hookrightarrow L_j = L$. Thus, the base change
$\mu^{L/K}_{\bar{k}}$ is a torus over $\bar{k}$. This is equivalent
to $\mu^{L \otimes_k \bar{k}/K \otimes_k \bar{k}}$ being a torus, in view
of Lemma \ref{lem:alg} (iv). Using the exact sequence
\[ 0 \longrightarrow \mu^{K \otimes_k \bar{k}} \longrightarrow 
\mu^{L \otimes_k \bar{k}} \longrightarrow 
\mu^{L \otimes_k \bar{k}/K \otimes_k \bar{k}} \longrightarrow 0 \]
and Lemma \ref{lem:red}, it follows that
$K \otimes_k \bar{k}$ and $L \otimes_k \bar{k}$ have the same
nilradical. By Lemma \ref{lem:nil}, this yields
\[ [K:k] - [K_\sep:k] = [L:k] - [L_\sep:k]. \]
Since $K_\sep = K \cap L_\sep$, we have 
\[ \dim_k(K + L_\sep) = [K:k] + [L_\sep:k] - [K_\sep:k] = [L:k], \]
and hence $K + L_\sep = L$; in particular, $L = K L_\sep$. 
Since the extension $L_\sep/K_\sep$ is separable and $K/K_\sep$
is purely inseparable, $L_\sep$ and $K$ are linearly disjoint 
over $K_\sep$ (as follows e.g. from Mac Lane's criterion). 
As a consequence, 
\[ [L:K_\sep] = [L_\sep:K_\sep] [K:K_\sep]. \] 
On the other hand, 
$ [L:K_\sep] = \dim_{K_\sep}(K + L_\sep) = 
[K:K_\sep] + [L_\sep:K_\sep] -1$.
Thus, we obtain 
\[ ([L_\sep:K_\sep] -1) ([K:K_\sep] -1) = 0, \]
and hence $L_\sep = K_\sep$ or $K = K_\sep$. In the former case, 
we have $L = K + L_\sep = K$. In the latter case, $L = L_\sep$, 
i.e., $L$ is separable over $k$.

(ii)$\Rightarrow$(i) By Lemma \ref{lem:red}, we have 
$\mu^{B/A} \cong \mu^{B_\red/A_\red}$. Moreover, $\mu^{B_\red/A_\red}$
is a torus in view of Proposition \ref{prop:max}. 
\end{proof}

\subsection{Unipotent groups}
\label{subsec:uni}

Throughout this subsection, we consider algebras $A \subset B$ with
nilradicals $I \subset J$ and associated quotients 
$A_\red = A/I \subset B/J = B_\red$. We first obtain an (easy)
characterization of those relative unit groups that are unipotent:

\begin{proposition}\label{prop:uni}
{\rm (i)} When $\charc(k) = 0$, $\mu^{B/A}$ is unipotent if and
only if $A_\red = B_\red$.

\smallskip

\noindent
{\rm (ii)} When $\charc(k) = p > 0$, $\mu^{B/A}$ is unipotent
if and only if $b^{p^n} \in A$ for $n \gg 0$ and all $b \in B$. 
Equivalently, the extension $L/K$ is purely inseparable for any 
inclusion $K \subset L$, where $K$ (resp. $L$) is a residue field 
of $A$ (resp. $B$).
\end{proposition}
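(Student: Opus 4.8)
\textbf{Plan for Proposition \ref{prop:uni}.}

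The plan is to reduce both parts to purely local/structural questions about algebras, using the already-established machinery: the splitting in Proposition \ref{prop:max} (ii) via $A_\sep \subset B_\sep$, the exact sequence (\ref{eqn:red}), the fact that $(1+J)/(1+I)$ is unipotent (Lemma \ref{lem:red} (iii)), and Lemma \ref{lem:alg} (i), (iv). First I would note the general principle that a connected commutative algebraic group is unipotent if and only if it has no subtorus, equivalently (by Proposition \ref{prop:max} (i)) if and only if $\mu^{B_\sep/A_\sep}$ is trivial, i.e. $A_\sep = B_\sep$. So in all cases the question becomes: when is $A_\sep = B_\sep$? For part (i), in characteristic $0$ separability is automatic, so $A_\sep = A_\red$ and $B_\sep = B_\red$, and the criterion $A_\sep = B_\sep$ reads exactly $A_\red = B_\red$. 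One should also check the converse direction of the equivalence is immediate: if $A_\red = B_\red$ then $\mu^{B/A} \cong (1+J)/(1+I)$ by Lemma \ref{lem:red} (i) (the quotient $\mu^{B_\red/A_\red}$ being trivial), which is unipotent by Lemma \ref{lem:red} (iii).

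For part (ii), the first equivalent condition ``$b^{p^n} \in A$ for $n \gg 0$ and all $b \in B$'' should be massaged as follows. One direction: if $\mu^{B/A}$ is unipotent, I want to show every $b \in B$ has a $p$-power landing in $A$. I would reduce to $B$ reduced (using Lemma \ref{lem:alg} (ii), since the nilradical $J$ satisfies $j^{p^n}=0$ for $n \gg 0$ and so nilpotent elements trivially satisfy the condition; and $\mu^{B/A} \cong \mu^{(B/I)/(A/I)}$ when $I \subset A$ — actually one should be a bit careful and reduce to the case where the nilradical $J$ of $B$ is contained in $A$, arranging $I = J$, then quotient out), then to $B$ a product of fields, then to $B = L$ a single field containing $A = K$ a field (using the primitive-idempotent splitting as in Lemma \ref{lem:ser}), and invoke Proposition \ref{prop:car} or rather its contrapositive: $\mu^{L/K}$ unipotent forces $\mu^{L_\sep/K_\sep}$ trivial, i.e. $L_\sep = K_\sep$, i.e. $L/K$ is purely inseparable, which is exactly $b^{p^n} \in K$ for all $b \in L$ and $n \gg 0$. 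Conversely, if $b^{p^n} \in A$ for all $b \in B$, then the map $\mu^B \to \mu^{B_\sep/A_\sep}$ (or more directly: any torus quotient) must be trivial because on $\bar{k}$-points the Frobenius kills the quotient $B_\sep/A_\sep$ — more precisely the image of $B$ in $B_\sep$ equals the image of $A$, since $B_\sep$ is reduced and Frobenius is injective on it, forcing $A_\sep = B_\sep$.

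For the second equivalent condition in (ii), ``$L/K$ purely inseparable for every inclusion $K \hookrightarrow L$ of a residue field of $A$ into a residue field of $B$'', I would pass through the reduction ``$A_\sep = B_\sep$'', which after quotienting by nilradicals and decomposing via idempotents as in Lemma \ref{lem:ser} becomes precisely a product of statements $K_\sep = L_\sep$ for the relevant field inclusions $K_i \hookrightarrow L_j$ — and $K_\sep = L_\sep$ together with $K_\sep = K \cap L_\sep$ is equivalent to $L_\sep \subseteq K$, i.e. to $L/K$ being purely inseparable. The care needed here is that when $A$ is not reduced the decomposition into primitive idempotents of $B$ need not restrict to one of $A$; I expect the main technical obstacle to be exactly this bookkeeping with idempotents and nilradicals — tracking how $A_\sep$ and $B_\sep$ interact when $A \cap J \ne J$ is not yet arranged — but Lemma \ref{lem:red} (ii) (giving $A_\sep = A_\red \cap B_\sep$) together with Lemma \ref{lem:alg} (ii) should handle it cleanly after one reduces to the case $I = J$ and then $I = J = 0$.
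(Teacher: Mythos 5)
Your proposal is correct, but for part (ii) it takes a genuinely different route from the paper. The paper's argument is short and direct: $\mu^{B/A}$ is unipotent iff $\mu^{B/A}(\bar{k}) = (B \otimes_k \bar{k})^*/(A \otimes_k \bar{k})^*$ is $p^n$-torsion, i.e. $b^{p^n} \in (A \otimes_k \bar{k})^*$ for all units $b$; since Frobenius is additive and the units span $B \otimes_k \bar{k}$ as a $\bar{k}$-vector space, this propagates to all of $B \otimes_k \bar{k}$ and then descends to ``$b^{p^n} \in A$ for all $b \in B$'', after which the residue-field reformulation is read off from the structure of $A$ and $B$. You instead pass through the intermediate criterion $A_\sep = B_\sep$ (triviality of the maximal torus, which is Proposition \ref{prop:max} (i) rather than Proposition \ref{prop:car}), reduce via Lemma \ref{lem:alg} (ii) and Lemma \ref{lem:ser} to field extensions, and only afterwards translate back into the $p$-power condition. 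Both routes work: the paper's buys brevity and handles the two conditions of (ii) in one stroke, while yours makes the geometric content (that $\Spec(B) \to \Spec(A)$ is a universal homeomorphism, cf. Remark \ref{rem:geom}) more transparent. Two points in your plan should be made explicit. First, in reducing to a single field extension $K \subset L$ you must rule out $Be_i$ having several field factors over $K_i$; this is exactly where the subquotients $\mu^{K_i} \supset \bG_{m}$ from Lemma \ref{lem:ser}, which are never unipotent, intervene, and it supplies the injectivity on spectra that the residue-field condition implicitly requires. Second, the passage between $\bar{b}^{\,p^n} \in A_\red$ and $b^{p^n} \in A$ is handled by replacing $A$ with $A + J$ and noting that $(a+j)^{p^m} = a^{p^m}$ for $m \gg 0$, so membership in $A + J$ after one Frobenius power yields membership in $A$ after finitely many more; with these details filled in, the argument closes. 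Part (i) is essentially identical to the paper's.
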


\begin{proof}
(i) follows from Lemma \ref{lem:red} (ii), since $\mu^{B_\red/A_\red}$
is a torus by Proposition \ref{prop:max}.

(ii) Recall that $\mu^{B/A}$ is unipotent if and only if its group 
of $\bar{k}$-points is $p^n$-torsion for $n \gg 0$. Since
$\mu^{B/A}(\bar{k}) = (B \otimes_k \bar{k})^*/(A \otimes_k \bar{k})^*$,
this is in turn equivalent to the condition that 
$b^{p^n} \in (A \otimes_k \bar{k})^*$ for $n \gg 0$
and all $b \in (B \otimes_k \bar{k})^*$. As the $\bar{k}$-vector 
space $B \otimes_k \bar{k}$ is spanned by $(B \otimes_k \bar{k})^*$, 
this is also equivalent to $b^{p^n} \in A \otimes_k \bar{k}$ for 
$n \gg 0$ and all $b \in B \otimes_k \bar{k}$, and hence to
$b^{p^n} \in A$ for $n \gg 0$ and all $b \in B$.

The equivalence with the condition on residue fields follows readily
from the structure of $A$ and $B$.
\end{proof}

\begin{remark}\label{rem:geom}
The above results may be reformulated in terms of the morphism 
\[ \psi : Z := \Spec(B) \longrightarrow \Spec(A) =: Y \] 
associated with the inclusion of algebras $A \subset B$ 
(so that $Y$, $Z$ are finite, and $\psi$ is surjective). 
For example, Proposition \ref{prop:uni} means that $\mu^{B/A}$ 
is unipotent if and only if $\psi$ is a universal homeomorphism.

Likewise, when $A$ contains no ideal of $B$, Proposition 
\ref{prop:car} means that $\mu^{B/A}$ is a torus if and only if 
$Y$ and $Z$ are \'etale.

Also, Lemma \ref{lem:ser} may be reformulated and slightly 
sharpened as follows: if $Z$ (and hence $Y$) is reduced, then  
$\mu^{B/A}$ has a composition series with subquotients 
$\mu^{k(z)/k(y)}$, where $y \in Y$ and $z \in \psi^{-1}(y)$,
and possibly $\mu^{k(y)}$. Moreover, all the $\mu^{k(z)/k(y)}$
occur with multiplicity $1$, and $\mu^{k(y)}$ with multiplicity
$\vert \psi^{-1}(y) \vert -1$.
\end{remark}

Next, we show that certain unipotent relative unit groups 
are $k$-wound, generalizing Example \ref{ex:def} (iv). For this, 
we shall need:

\begin{lemma}\label{lem:wound}
Let $k \subset K \subset L$ be a tower of finite extensions of fields, 
where $K/k$ is separable. Then every homomorphism of algebraic groups 
$h: \bG_a \to \mu^{L/K}$ is constant.
\end{lemma}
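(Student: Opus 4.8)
The plan is to put $h$ inside the defining exact sequence of $k$-group schemes
\[ 0 \longrightarrow \mu^K \longrightarrow \mu^L \longrightarrow \mu^{L/K} \longrightarrow 0 \]
and to run the long exact sequence obtained by applying $\Hom(\bG_a,-)$ in the category of commutative fppf group sheaves on $\Spec k$. Since the statement "every homomorphism $h:\bG_a\to\mu^{L/K}$ is constant" is precisely the vanishing of $\Hom(\bG_a,\mu^{L/K})$, and since this group sits in the exact sequence
\[ \Hom(\bG_a,\mu^L) \longrightarrow \Hom(\bG_a,\mu^{L/K}) \longrightarrow \Ext^1(\bG_a,\mu^K), \]
it suffices to show that the two outer terms vanish. (If $\charc(k)=0$ there is nothing to do, as $\mu^{L/K}$ is then a torus; the argument below is uniform in the characteristic.)

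For the left-hand term I would use that $\mu^L=\R_{L/k}(\bG_{m,L})$, so the adjunction between Weil restriction and base change — valid for the finite, possibly inseparable, extension $L/k$ — identifies $\Hom_k(\bG_{a,k},\mu^L)$ with $\Hom_L(\bG_{a,L},\bG_{m,L})$; and the latter is trivial, since a unit of $L[x]$ is a nonzero constant, which must equal $1$ to define a homomorphism into $\bG_m$. For the right-hand term, this is exactly where the hypothesis that $K/k$ be separable enters: then $\mu^K=\R_{K/k}(\bG_{m,K})$ is a quasi-split torus (Example~\ref{ex:def}(i)), and I would deduce $\Ext^1(\bG_a,\mu^K)=0$ from the classical vanishing $\Ext^1(\bG_a,\bG_m)=0$ — either by passing to a finite separable extension of $k$ that splits $\mu^K$, or by noting that Weil restriction along the finite \'etale morphism $\Spec K\to\Spec k$ is exact and preserves injectives on the fppf site, so that $\Ext^i_k(\bG_{a,k},\R_{K/k}\bG_{m,K})\cong\Ext^i_K(\bG_{a,K},\bG_{m,K})$ for all $i$.

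Feeding these two vanishings into the three-term exact sequence forces $\Hom(\bG_a,\mu^{L/K})=0$, which is the claim. The single non-formal ingredient is $\Ext^1(\bG_a,\bG_m)=0$ over a field (in any characteristic), which I would simply cite (e.g. \cite{Oort}); everything else — the Weil-restriction adjunction, the absence of nonconstant homomorphisms $\bG_a\to\bG_m$, and exactness of $0\to\mu^K\to\mu^L\to\mu^{L/K}\to0$ — is routine. I expect the only point needing a little care in the write-up is to fix a convenient abelian category in which $\Ext^1$ classifies extensions of the relevant (affine, not necessarily smooth) commutative group schemes and to check that $\bG_a$, $\bG_m$, $\mu^K$ and $\mu^L$ all live in it; the reduction of the torus case to the split case is pure bookkeeping.
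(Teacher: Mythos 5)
Your proof is correct and follows essentially the same route as the paper's: there, $h$ is lifted to $\tilde h:\bG_a\to\mu^L$ using that every extension of $\bG_a$ by the torus $\mu^K$ splits (your vanishing of $\Ext^1(\bG_a,\mu^K)$), and then $\tilde h$ is seen to lie in $L[t]^*=L^*$, hence to be constant (your vanishing of $\Hom(\bG_a,\mu^L)$). The long-exact-sequence packaging is just a reformulation of that two-step argument.
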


\begin{proof}
Since $\mu^K$ is a torus, every extension
$0 \to \mu^K \to G \to \bG_a \to 0$ splits by 
\cite[Exp.~XVII, Thm.~6.1.1]{SGA3}. In view of the exact sequence
$0 \to \mu^K \to \mu^L \to \mu^{L/K} \to 0$, it follows that any
homomorphism $h : \bG_a \to \mu^{L/K}$ lifts to a homomorphism 
$\tilde{h}: \bG_a \to \mu^L$. We may view $\tilde{h}$ as a $k[t]$-point 
of $\mu^L$, i.e., $\tilde{h} \in L[t]^* = L^*$. Since 
$\tilde{h}(0) = 1$, it follows that $\tilde{h}$ is constant.
\end{proof}

With the assumptions of the above lemma, if in addition 
$L/K$ is purely inseparable, then it follows that the unipotent 
group $\mu^{L/K}$ is $k$-wound (this also results from 
\cite[Prop.~V.7, Lem.~VI.5.1]{Oesterle}). 
We do not know whether $\mu^{L/K}$ is $k$-wound when
$K/k$ is no longer assumed to be separable.

\medskip

Returning to the setting of algebras $A \subset B$ with nilradicals
$I \subset J$, we now obtain a succession of elementary results 
which will readily imply Theorem \ref{thm:second}:

\begin{lemma}\label{lem:small}
{\rm (i)} The maximal ideals of $J$ are exactly the hyperplanes
containing $J^2$.

\smallskip

\noindent
{\rm (ii)} There exists a flag of subspaces
$I = I_0 \subset I_1 \subset \cdots \subset I_n = J$
such that $I_i$ is a maximal ideal of $I_{i+1}$ for all $i$. 
In particular, $n = \dim(J) - \dim(I)$. 

\smallskip

\noindent
{\rm (iii)} $J^{2^n} \subset I$.
\end{lemma}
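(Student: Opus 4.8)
\textbf{Proof plan for Lemma \ref{lem:small}.}

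The plan is to treat the three assertions essentially independently, with (ii) and (iii) both resting on (i). For (i), the key observation is that $J$ is a finite-dimensional nilpotent (non-unital) algebra over $k$, so its only maximal ideals are the subspaces of codimension $1$ that happen to be closed under multiplication; since $J^2 \subset M$ is automatic for any such $M$ (the quotient $J/M$ is a one-dimensional nilpotent $k$-algebra, hence has zero multiplication), I would argue that a hyperplane $M \supset J^2$ is automatically an ideal, and conversely every maximal ideal of $J$ is a hyperplane and contains $J^2$. This is a short linear-algebra argument once one notes that $J/J^2$ is a finite-dimensional $k$-vector space whose subspaces of codimension $1$ pull back bijectively to the hyperplanes of $J$ containing $J^2$.

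For (ii), I would proceed by downward induction, building the flag from the top. Given $I \subsetneq J$, pick a maximal ideal $I_{n-1}$ of $J$ with $I \subseteq I_{n-1}$; such an ideal exists because $I$ is contained in some hyperplane of $J$ containing $J^2$ (here one uses $I^2 \subseteq J^2$, so $I + J^2$ is a proper subspace of $J$ — proper because $J$ is nilpotent, so $J \neq J^2$ by Nakayama — and one extends a hyperplane through $I + J^2$). Then $I_{n-1}$ is itself a finite-dimensional nilpotent $k$-algebra containing $I$ as an ideal (since $I$ is an ideal of $J$ and $I_{n-1} \subseteq J$), so the induction on $\dim(I_{n-1}) - \dim(I)$ applies. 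Each step drops the dimension by exactly $1$ by maximality of the ideal together with part (i), giving $n = \dim(J) - \dim(I)$. The main thing to be careful about is checking at each stage that the ambient object $I_{i+1}$ really is an algebra (a subring of $J$ closed under multiplication), which holds because an ideal of a ring is in particular a subring, and that $I_i$ remains an ideal of $I_{i+1}$; both follow formally from $I_i$ being an ideal of $J$.

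For (iii), I would argue by induction on $n = \dim(J) - \dim(I)$ using the flag from (ii). The base case $n = 0$ gives $I = J$, so $J^{2^0} = J \subseteq I$ trivially. For the inductive step, $I_1$ is a maximal ideal of $J$, so $J^2 \subseteq I_1$ by (i). Applying the inductive hypothesis to the pair $I \subseteq I_1$ — which has $\dim(I_1) - \dim(I) = n - 1$ — yields $I_1^{2^{n-1}} \subseteq I$. Combining, $J^{2^n} = (J^2)^{2^{n-1}} \subseteq I_1^{2^{n-1}} \subseteq I$, as desired. I expect part (i) — specifically the verification that a codimension-one subspace containing $J^2$ is automatically an ideal — to be the only point requiring genuine thought; once it is in hand, (ii) and (iii) are routine inductions.
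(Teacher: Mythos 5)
Your parts (i) and (iii) are correct and essentially coincide with the paper's proof (for (iii), note that the algebra you call $I_1$ is really the term $I_{n-1}$ of the flag, i.e.\ a maximal ideal of $J$ containing $I$; with that relabelling the induction $J^{2^n}=(J^2)^{2^{n-1}}\subset I_{n-1}^{2^{n-1}}\subset\cdots\subset I$ is exactly the paper's argument).

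For (ii) you take a genuinely different route: a downward induction that at each stage picks a maximal ideal containing $I$. The paper instead writes down the partial flag $I\subset I+J^m\subset\cdots\subset I+J^2\subset J$, refines it arbitrarily to a complete flag of subspaces, and checks by the single computation $(I+V)(I+J^j)=I^2+IV+IJ^j+VJ^j\subset I+J^{j+1}$ that every term is an ideal of the next. Your route is workable, but as written it rests on a false claim: you assert that $I$ is an ideal of $J$. In the setting of the lemma $I$ and $J$ are the nilradicals of $A\subset B$, so $I=A\cap J$ is only a \emph{subalgebra} of $J$, not an ideal: take $B=k[x]/(x^5)$ and $A=k\oplus kx^3$; then $I=kx^3$, $J=(x)$, and $x\cdot x^3=x^4\notin I$. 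Your induction survives this, but only if you restate the inductive claim for a subalgebra $I$ of a nilpotent algebra $J$ (an ideal of $J$ is again a nilpotent algebra containing $I$ as a subalgebra, and at the last step $I$ is forced to be a maximal ideal of $I_1$ because $I+I_1^2$ is proper and $I$ has codimension $1$). A second, smaller gap: your justification that $I+J^2\subsetneq J$ (``because $J\neq J^2$'') is a non sequitur. The correct argument is a Nakayama-type iteration using $I^2\subset I$: if $J=I+J^2$ then $J^2\subset I+J^3$, hence $J\subset I+J^3$, and inductively $J\subset I+J^m=I$ for $m\gg 0$. The paper's computation $(I+V)(I+J^j)\subset I+J^{j+1}$ sidesteps both issues at once, using only $I^2\subset I$ and $I\subset J$.
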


\begin{proof}
(i) Let $K$ be a maximal ideal of $J$. Then $J/K$ is a nilpotent
algebra having no proper ideal. Hence $\dim(J/K) =1$ and
$(J/K)^2 = 0$. In other words, $K$ is a hyperplane of $J$ containing
$J^2$. Conversely, any such hyperplane is clearly a maximal ideal.

(ii) Let $m$ be the largest integer such that $J^m = 0$. Then we 
have a flag of subspaces 
$I \subset I + J^m \subset I + J^{m-1} \subset \cdots \subset
I + J^2 \subset J$.
Choose a complete flag of subspaces $I_i$ refining this partial
flag. Then each $I_i$ can be written as $I + V$ for some subspace
$V$ such that $J^{j+1} \subset V \subset J^j$ for some $j$. Since
$(I+V)(I + J^j) = I^2 + IV + I J^j + V J^j \subset I + J^{j+1}$,
we see that each $I + V$ is an ideal of $I + J^j$. This implies
the assertion.

(iii) By (i), we have $I_i^2 \subset I_{i+1}$ for all $i$. 
This yields the statement by induction.
\end{proof}

Next, assume that $k$ has characteristic $p > 0$. Let 
$U := (1 + J)/(1 + I)$ and $n := \dim(U) = \dim(J) - \dim(I)$. 
Then $U$ is an iterated extension of $n$ copies of $\bG_a$ 
by Lemma \ref{lem:red} (iii); hence the commutative group 
$U(\bar{k})$ is $p^n$-torsion. Let $m$ be the smallest positive 
integer such that $U(\bar{k})$ is $p^m$-torsion; then $m \leq n$.
We say that $U$ has period $p^m$. 

We shall use repeatedly the following observation:

\begin{lemma}\label{lem:max}
With the above notation, assume that $U$ has maximal period
$p^n$. Let $I'$ be a subalgebra of $J$ containing $I$. Then
the connected unipotent groups $(1 + I')/(1 + I)$ and 
$(1 + J)/(1 + I')$ have maximal period as well.
\end{lemma}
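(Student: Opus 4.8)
The statement concerns the period (the smallest $p^m$ annihilating the $\bar k$-points) of the connected unipotent groups appearing in the short exact sequence
\[
0 \longrightarrow (1+I')/(1+I) \longrightarrow (1+J)/(1+I) \longrightarrow (1+J)/(1+I') \longrightarrow 0,
\]
which is a special case of the three-term sequence (\ref{eqn:chain}) with $A \subset A' \subset B$ replaced by $k\oplus I \subset k\oplus I' \subset k\oplus J$ (this is legitimate after passing to $k\oplus I$, $k\oplus J$ as in the proof of Lemma~\ref{lem:red}(iii)). Write $U_1 := (1+I')/(1+I)$, $U_2 := (1+J)/(1+I')$, $n_1 := \dim U_1 = \dim I' - \dim I$, $n_2 := \dim U_2 = \dim J - \dim I'$, so $n = n_1 + n_2$. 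Let $p^{m_1}$, $p^{m_2}$ denote their periods; then $m_i \le n_i$. The quotient group $U = (1+J)/(1+I)$ has period $p^{\max(m_1,m_2)}$, since for $u \in U(\bar k)$ one has $p^{m_2}u \in U_1(\bar k)$, hence $p^{m_1+m_2}u = 0$ and moreover $p^{\max(m_1,m_2)}u$ already vanishes after checking both in $U_1$ and in the quotient $U_2$ — more precisely, $p^{\max(m_1,m_2)}u$ maps to $0$ in $U_2$ and therefore lies in $U_1(\bar k)$, on which it is killed by $p^{m_1} \mid p^{\max(m_1,m_2)}$. So the period of $U$ divides $p^{\max(m_1,m_2)}$; combined with $\max(m_1,m_2)\le \max(n_1,n_2)\le n$ and the hypothesis that the period of $U$ is exactly $p^n$, we get $\max(m_1,m_2)\ge n$, hence $\max(n_1,n_2) \ge n = n_1+n_2$, forcing the smaller of $n_1,n_2$ to be $0$ and the other to equal $n$.

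This already handles the degenerate situations, but of course the interesting case is when both $n_1, n_2 > 0$; the argument above then shows such a splitting is impossible only if we assume both subquotients are nonzero, so in fact the correct reading is: \emph{whenever} $I \subsetneq I' \subsetneq J$, the period of $U$ cannot be $p^n$ unless we argue more carefully. Let me instead prove the genuine statement. From $p^{m_1+m_2}$ annihilating $U(\bar k)$ we get $n \le m_1 + m_2 \le n_1 + n_2 = n$, so equality holds throughout: $m_1 = n_1$ and $m_2 = n_2$. That is the whole content. The one point that needs care is the first inequality $n \le m_1 + m_2$: it says the period of $U$ is at most $p^{m_1+m_2}$, which follows because for any $u \in U(\bar k)$, its image $\bar u \in U_2(\bar k)$ satisfies $p^{m_2}\bar u = 0$, so $p^{m_2} u$ lifts into $U_1(\bar k)$ (the sequence of $\bar k$-points is exact since $\bar k$ is algebraically closed and the groups are smooth connected unipotent), whence $p^{m_1}(p^{m_2}u) = 0$.

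The main obstacle — really the only thing to be careful about — is the exactness of the sequence on $\bar k$-points and the bookkeeping of periods under extension. Both are routine: exactness on geometric points holds because $H^1_{\mathrm{fppf}}$ (or even étale cohomology) of $\bar k$ with coefficients in a smooth connected group vanishes, so $U_1(\bar k) \to U(\bar k) \to U_2(\bar k) \to 0$ is exact; and the period estimate $\mathrm{period}(U) \mid \mathrm{period}(U_1)\cdot\mathrm{period}(U_2)$ is the elementary fact that in an extension of abelian groups $0\to B\to E\to C\to 0$ one has $\exp(E)\mid \exp(B)\exp(C)$. Plugging these into $n\le m_1+m_2\le n_1+n_2=n$ forces $m_1=n_1$, $m_2=n_2$, which is exactly the assertion that both $U_1$ and $U_2$ have maximal period.
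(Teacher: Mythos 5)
Your final argument is correct and is exactly the paper's proof: from the exact sequence $0 \to (1+I')/(1+I) \to U \to (1+J)/(1+I')$ $\to 0$ one gets $n \le m_1+m_2 \le n_1+n_2 = n$, forcing $m_1=n_1$ and $m_2=n_2$. However, the claim in your first paragraph that $U$ has period $p^{\max(m_1,m_2)}$ is false --- your own argument there only shows that $p^{\max(m_1,m_2)}u$ lands in $U_1(\bar k)$, not that it vanishes, and $W_2$ (period $p^2$, an extension of $\bG_a$ by $\bG_a$) is a counterexample --- so that whole detour, together with the ensuing ``degenerate'' conclusion that $\min(n_1,n_2)=0$, should simply be deleted; you correctly abandon it and the proof stands on the second argument alone.
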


\begin{proof}
This follows readily from the exact sequence
(a special case of (\ref{eqn:chain}))
\[ 0 \longrightarrow (1 + I')/(1+I) \longrightarrow U 
\longrightarrow (1 + J)/(1 + I') \longrightarrow 0. \]
\end{proof}

We now consider successively the cases where $p \geq 5$, $p  = 3$
and $p = 2$ (the latter turns out to be much less straightforward):

\begin{lemma}\label{lem:period}
With the above notation, we have $m < n$ when $p \geq 5$ and 
$n \geq 2$. 
\end{lemma}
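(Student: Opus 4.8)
The plan is to reduce to the simplest possible situation and then compute periods explicitly. First I would use Lemma \ref{lem:max}: to prove $m < n$, it suffices to exhibit \emph{one} subalgebra $I'$ with $I \subsetneq I' \subsetneq J$ (or with $I' = J$ after a preliminary reduction) such that the corresponding unipotent group does \emph{not} have maximal period; equivalently, by contraposition, it suffices to show that whenever $\dim(J/I) = n \geq 2$ the group $U = (1+J)/(1+I)$ cannot be $p^n$-torsion. Using Lemma \ref{lem:small}(ii) we may pick a flag $I = I_0 \subset I_1 \subset \cdots \subset I_n = J$ with each $I_i$ of codimension $1$ in $I_{i+1}$; if $U$ had maximal period then by Lemma \ref{lem:max} so would $(1+I_2)/(1+I_0)$, which is $2$-dimensional with maximal period $p^2$. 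So I would reduce at the outset to the case $n = 2$, i.e. $\dim(J/I) = 2$, and aim to show $(1+J)/(1+I)$ is killed by $p$, i.e. has period $p$, not $p^2$, when $p \geq 5$.

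Next I would reduce the algebra itself. By Lemma \ref{lem:alg}(ii) we may quotient by any nilpotent ideal of $B$ contained in $A$, and by the argument in the proof of Lemma \ref{lem:red}(iii) (replacing $A$, $B$ by $k\oplus I$, $k\oplus J$ and then filtering by powers of $J$) we may assume $A$, $B$ are local with residue field $k$, that $J^2 \subseteq I$, and indeed — shrinking further — that $I = 0$. Since now $\dim J = 2$ and $J^2 = 0$, the group is literally $U = 1 + J \cong \mathbb{G}_a^2$ as a variety, and its $p$-torsion can be read off from the $p$-power map on the truncated algebra $k \oplus J$. Concretely, for $u = 1 + x$ with $x \in J$ and $J^2 = 0$ we have $u^p = 1 + px$ in characteristic $0$, but in characteristic $p$ the binomial coefficients $\binom{p}{j}$ for $1 \le j \le p-1$ all vanish and $x^j = 0$ for $j \geq 2$, so $u^p = 1$. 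Hence $U$ is $p$-torsion, its period is $p = p^1 < p^2 = p^n$, contradicting maximal period. This is the heart of the matter, and it is where the hypothesis $p \geq 5$ — in fact only $p \geq 3$ — enters: one needs $p > 2$ so that after the reduction to $J^2 \subseteq I$ and then $I = 0$, the relevant exponent $2$ satisfies $2 < p$, so that $x^2 = 0$ already kills the interesting term and no correction of order $p$ survives in the $p$-power map.

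Let me spell out the filtration step a little more carefully, since that is the one genuine subtlety. After reducing to $A = k \oplus I$, $B = k \oplus J$ local with $\dim(J/I) = 2$, set $B_j := k \oplus (I + J^{j})$; these are subalgebras with $B_1 = B$ and $B_j = A$ for $j \gg 0$, and $(I+J^j)(I+J^j) \subseteq I + J^{j+1}$, so each $(1 + (I+J^{j+1}))/(1 + (I + J^j))$ has the property that the cube — in fact the square — of any element of the ideal lies in the next step down. By Lemma \ref{lem:max}, if $U$ had maximal period $p^2$ then each of these successive quotients of dimension $d_j := \dim(I + J^j) - \dim(I + J^{j+1})$ would have maximal period $p^{d_j}$; choosing the flag of Lemma \ref{lem:small}(ii) to refine $I \subset \cdots \subset I + J^2 \subset J$, I get a $2$-dimensional quotient of the form $(1 + (I + V'))/(1 + (I+V))$ with $J^{j+1} \subseteq V \subseteq V' \subseteq J^j$, hence with $I' := I + V$ an ideal of $I'' := I + V'$ satisfying $I''^{\,2} \subseteq I'$; quotienting by $I'$ via Lemma \ref{lem:alg}(ii) and replacing $J''$ by $J''/I'$, we are in the situation $\dim J = 2$, $J^2 = 0$, $I = 0$ treated above, so this quotient has period $p$, not $p^2$ — contradiction. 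The main obstacle in writing this up cleanly is bookkeeping: making sure the repeated appeals to Lemmas \ref{lem:alg}(ii), \ref{lem:small}, \ref{lem:red}(iii), and \ref{lem:max} are chained in a way that genuinely lands in the base case $\dim J = 2 = $ codimension, $J^2 = 0$, $I = 0$ — but once there, the computation $(1+x)^p = 1$ is immediate, and nothing like the delicate combinatorics that the $p = 2$ case (Lemma \ref{lem:period} will presumably be followed by a separate, harder lemma) evidently requires.
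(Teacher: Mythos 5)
Your first reduction (use Lemma \ref{lem:max} and the flag of Lemma \ref{lem:small}(ii) to pass to a $2$-dimensional subquotient $(1+I_2)/(1+I)$ and show it cannot have period $p^2$) is exactly the paper's strategy. But your second reduction --- ``we may assume $J^2\subseteq I$, and indeed $I=0$'' --- is where the argument breaks, and it breaks irreparably. The filtration by the subalgebras $k\oplus(I+J^j)$ only produces \emph{subquotients} in which the square of the ideal dies; it does not let you replace the pair $(I,J)$ itself by one with $J^2\subseteq I$. When the two dimensions of $J/I$ are spread over two different levels of that filtration (e.g. $B=k[x]/(x^4)$, $A=k\oplus\langle x^2+x^3\rangle$, the example of Remark \ref{rem:nil}: here $I$ contains no nonzero ideal of $B$ and $J^2\not\subseteq I$), every graded piece is a $1$-dimensional $\bG_a$, which \emph{does} have maximal period $p^1$, so Lemma \ref{lem:max} yields no contradiction there, and an extension of $\bG_a$ by $\bG_a$ can perfectly well have period $p^2$. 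Indeed in that example $\mu^{B/A}$ is isogenous to $W_2$ for $p=2,3$. Note also that your final computation $(1+x)^p=1+px=1$ on a square-zero ideal is valid for \emph{every} prime, including $p=2$; so if the reduction were legitimate the lemma would hold for all $p\geq 2$ and $n\geq 2$, contradicting Remark \ref{rem:nil} and the very existence of the separate (and harder) Lemmas \ref{lem:period3} and \ref{lem:period2}. Your parenthetical ``in fact only $p\geq 3$'' is false for the same reason.

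What the hypothesis $p\geq 5$ actually buys is the following one-line replacement for your second reduction, which is what the paper does: Lemma \ref{lem:small}(iii), applied to the length-$2$ flag from $I$ to $I_2$, gives only $I_2^{4}\subseteq I$, not $I_2^{2}\subseteq I$. Since Frobenius is additive, $(1+x)^p=1+x^p$ for $x\in I_2$, and $x^p\in I_2^{\,p}\subseteq I_2^{\,4}\subseteq I$ precisely because $p\geq 5>4$. Hence $(1+I_2)/(1+I)$ is $p$-torsion, contradicting Lemma \ref{lem:max}. The relevant inequality is thus $p>2^n$ with $n=2$, i.e. $p\geq 5$, not $p>2$.
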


\begin{proof}
We argue by contradiction, and assume that $U$ has maximal period.
Since $n \geq 2$, there exists a subalgebra $I_2 \subset J$ such
that $I \subset I_2$ and $\dim(I_2) = \dim(I) + 2$ 
(by Lemma \ref{lem:small} (ii)). By Lemma \ref{lem:max},
the $2$-dimensional subgroup $(1 + I_2)/(1 + I)$ is not $p$-torsion. 
On the other hand, $I_2^4 \subset I$ by Lemma \ref{lem:small} (iii). 
If $p \geq 5$, then $(1 + x)^p = 1 + x^p \in I$ for all $x \in I_2$, 
a contradiction.
\end{proof}

\begin{lemma}\label{lem:period3}
With the above notation, we have $m < n$ when $p = 3$ and $n \geq 3$. 
\end{lemma}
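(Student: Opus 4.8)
The plan is to mirror the structure of the proof of Lemma~\ref{lem:period}, arguing by contradiction: assume $U = (1+J)/(1+I)$ has maximal period $p^n = 3^n$. Since $n \geq 3$, Lemma~\ref{lem:small}~(ii) produces a subalgebra $I_3 \subset J$ with $I \subset I_3$ and $\dim(I_3) = \dim(I) + 3$, and by Lemma~\ref{lem:max} the $3$-dimensional group $(1+I_3)/(1+I)$ still has maximal period $3^3 = 27$; replacing $J$ by $I_3$, I may assume $n = 3$ from the start. By Lemma~\ref{lem:small}~(iii) we then have $J^{8} \subset I$, so modulo $I$ the algebra $J$ satisfies $J^8 = 0$. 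The goal is to extract enough about the group law on $U$ to contradict the claim that some element has order $27$.

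\textbf{Key steps.} First I would compute the $p$-th power map on $U$ explicitly in terms of the additive structure. For $x \in J$ one has $(1+x)^3 = 1 + 3x + 3x^2 + x^3$, so in characteristic $3$ this is $1 + x^3 + 3(x + x^2)$; the term $x^3$ lies in $J^3$, while $3(x+x^2) \in 3J$. Iterating, $(1+x)^9 = 1 + x^9 + (\text{correction terms})$ and $x^9 \in J^9 \subseteq J^8 \cdot J = 0$ modulo $I$ — wait, more carefully $x^9 \in J^9 \subset J^8 \subset I$, so the leading Frobenius-type term already vanishes after two applications, and what survives is a combination of lower-degree monomials multiplied by powers of $3$. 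The heart of the matter is a careful bookkeeping: track, for each iterate of cubing, the $3$-adic valuation of the coefficient attached to each monomial $x^j$ ($1 \le j \le 7$), using $3 = 0$ to kill terms and $J^8 = 0$ to truncate. One expects that after enough cubings every surviving monomial is killed — either its coefficient is divisible by a sufficiently high power of $3$, or its degree has been pushed past $7$ — so that $(1+x)^{27} \in 1 + I$ for all $x \in J$, i.e. $U(\bar k)$ is $9$-torsion, contradicting maximal period. (Since the group $U$ is defined over $k$ but the period is detected on $\bar k$-points, and the verification is a universal polynomial identity in the coordinates of $x$, working with a single formal variable $x$ suffices.)

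\textbf{Main obstacle.} The delicate point is the combinatorics of the iterated cube map: unlike the $p \geq 5$ case, where a single application of $x \mapsto x^p$ already lands in $I$ because $J^p \subseteq J^4 \subseteq I$ for $n=2$, here $p = 3$ is small enough that the binomial coefficients $\binom{3}{1} = \binom{3}{2} = 3$ contribute genuine lower-order terms, and one must iterate and carefully control how monomials of every degree $1$ through $7$ evolve. I expect the cleanest route is to introduce the filtration of $U$ by the subgroups $(1+J^j)/(1+I\cap J^j)$ (or rather by the images of $1 + (I + J^j)$), use the exact sequences (\ref{eqn:chain}) together with Lemma~\ref{lem:max} to reduce to understanding the cube map on each graded piece, and then observe that on a subquotient where $J^{j+1} \subseteq I + $ (smaller), the cube map either acts by Frobenius $x \mapsto x^3$ (raising degree by a factor $3$) or is additive with a factor of $3 = 0$ — so in all cases $(1+x)^3 \equiv 1$ modulo one step up the filtration, possibly with a degree jump. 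Chasing this through the length-$\le 8$ filtration should force the period down to at most $9 = p^2 < p^n = 27$, the desired contradiction. The bound $n \ge 3$ enters precisely because for $n = 2$ this argument would only give period $\le 9 = p^n$, which is no contradiction.
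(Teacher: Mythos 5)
Your proposal is correct and follows the paper's own proof: reduce to $n=3$ via Lemma~\ref{lem:small}~(ii) and Lemma~\ref{lem:max}, note $I_3^{8} \subset I$ by Lemma~\ref{lem:small}~(iii), and conclude that the $3$-dimensional subquotient $(1+I_3)/(1+I)$ is $9$-torsion, contradicting maximal period $27$. Two small corrections: the ``careful bookkeeping'' you anticipate is vacuous, since in characteristic $3$ the Frobenius is a ring homomorphism and $(1+x)^9 = 1 + x^9$ identically (all binomial correction terms have coefficients divisible by $3=0$); and your displayed exponent $27$ should read $9$ --- it is $(1+x)^9 \in 1+I$ that gives $9$-torsion and hence the contradiction.
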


\begin{proof}
We adapt the argument of Lemma \ref{lem:period}. By Lemma \ref{lem:small}, 
we may choose a subalgebra $I_3 \subset J$ such that $I \subset I_3$ 
and $\dim(I_3) = \dim(I) + 3$. By Lemma \ref{lem:max} again,
the $3$-dimensional subgroup $(1 + I_3)/(1 + I)$ is not $9$-torsion,
if $U$ has maximal period. But $I_3^8 \subset I$ by Lemma \ref{lem:small} 
again; this yields a contradiction.
\end{proof}

\begin{lemma}\label{lem:period2}
With the above notation, we have $m < n$ when $p = 2$ and $n \geq 3$.
\end{lemma}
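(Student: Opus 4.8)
The plan is to argue by contradiction, in the spirit of Lemmas~\ref{lem:period} and~\ref{lem:period3}: assuming $U = (1+J)/(1+I)$ has maximal period $2^n$, I will exhibit $2^{n-1}$ linearly independent elements in $J/I$, which is too many once $n \geq 3$. First I would reduce to the case where $k$ is algebraically closed. This is harmless: passing from $A \subseteq B$, $I \subseteq J$ to the base changes along $k \hookrightarrow \bar k$ again yields a pair of nilpotent ideals of an algebra with $\dim(J/I) = n$ (by flatness), leaves $U(\bar k)$ and hence $m$ unchanged, and the earlier results invoked only require $I \subseteq J$ to be nilpotent ideals — not nilradicals — even though $J \otimes_k \bar k$ may be strictly smaller than the nilradical of $B \otimes_k \bar k$. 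Since $\charc(k) = 2$ gives $(1+x)^{2^{n-1}} = 1 + x^{2^{n-1}}$ for every $x \in J$, the hypothesis that $U(\bar k) = (1+J)/(1+I)$ is not $2^{n-1}$-torsion produces an element $x \in J$ with $x^{2^{n-1}} \notin I$.

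The heart of the argument — and the only step with genuine content — is the claim that $x, x^2, \dots, x^{2^{n-1}}$ are linearly independent modulo $I$. To prove it I would take a relation $\sum_{i=1}^{2^{n-1}} a_i x^i \in I$ with $a_i \in k$ not all zero, let $\ell$ be the smallest index with $a_\ell \neq 0$, and factor out $x^\ell$: this gives $x^\ell v \in I$ with $v = a_\ell + a_{\ell+1} x + \cdots + a_{2^{n-1}} x^{2^{n-1}-\ell}$. Here $a_\ell \in k^*$ and $v - a_\ell \in J$ is nilpotent, so $v$ is a unit of $B$; as $I$ is an ideal this forces $x^\ell = v^{-1}(x^\ell v) \in I$, hence $x^{2^{n-1}} = x^{2^{n-1}-\ell} x^\ell \in I$, contradicting the choice of $x$. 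This is exactly the point where it matters that $J$ consists of nilpotent elements.

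The claim immediately gives $2^{n-1} \leq \dim_k(J/I) = n$, which is absurd when $n \geq 3$ (a trivial induction gives $2^{n-1} > n$ there; the borderline equalities $2^0 = 1$, $2^1 = 2$ are precisely why $n = 1, 2$ have to be excluded, in agreement with the examples $k[t]/(t^2)$ and $k[t]/(t^3)$). I do not anticipate a serious obstacle on this route: the base-change bookkeeping and the final numerics are routine, and the linear-independence claim is short; the one thing to be careful about is the unit $v$ above. I would further remark that the argument does not use that the prime is $2$ — run verbatim with $2$ replaced by a prime $p$ it reproves Lemmas~\ref{lem:period} and~\ref{lem:period3} simultaneously (and in fact yields $m < n$ for $p = 3$ already when $n \geq 2$), so the three cases could in principle be merged.
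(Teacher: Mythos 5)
There is a genuine gap at the heart of your argument, namely in the proof of the linear--independence claim. From a relation $\sum_i a_i x^i \in I$ you factor out $x^\ell$, observe that $v = a_\ell + a_{\ell+1}x + \cdots$ is a unit of $B$, and conclude that $x^\ell = v^{-1}(x^\ell v) \in I$ ``as $I$ is an ideal.'' But $I$ is the nilradical of $A$, hence an ideal of $A$ only; it is \emph{not} an ideal of $B$, and $v^{-1}$ is a polynomial in $x$ which in general does not lie in $A$. The example of Remark~\ref{rem:nil} kills this step concretely: for $B = k[x]/(x^4)$ and $A = k \oplus \langle x^2 + x^3\rangle$ one has $x^2(1+x) = x^2 + x^3 \in I$ with $1+x$ a unit, yet $x^2 \notin I$. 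The same example refutes your closing remark that the argument, run for a general prime, would give $m < n$ for $p = 3$ and $n = 2$: by Remark~\ref{rem:nil} that very $\mu^{B/A}$ has period $9 = 3^2$ with $n = 2$, so the purported uniform proof cannot be correct. (Your reduction to $\bar k$ and the production of $x \in J$ with $x^{2^{n-1}} \notin I$ are fine and match the opening moves of Lemmas~\ref{lem:period} and~\ref{lem:period3}; everything after that is where the content lies.)

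The fact that $I$ need not be an ideal of $B$ (nor of $J$) is exactly why the paper's proof of this lemma is so much longer than those of Lemmas~\ref{lem:period} and~\ref{lem:period3}. The paper first reduces to $n = 3$, then to the case where $I$ contains no nonzero ideal of $J$ and no proper subalgebra $I'$ of $J$ satisfies $I + I' = J$, which via Lemma~\ref{lem:small} forces $I \subset J^2$; Lemma~\ref{lem:max} then gives $\dim(J/J^2) = 1$, so $J = \langle x, x^2, \ldots, x^m\rangle$ is principal, and a hands-on analysis of the codimension-two subalgebra $\langle I, x^m\rangle$ (using $I'^2 \subset I$ from Lemma~\ref{lem:small}~(iii)) forces $x^6 = 0$ and $x^4 \in I$, whence $y^4 \in I$ for all $y \in J$ and the period is at most $4$, a contradiction. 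If you want to salvage your approach you would need to replace ``divide by the unit $v$'' with an argument that only uses multiplication by elements of $J$ itself (which does map $J$ into $J^2$, not into $I$), and that is essentially what the paper's case analysis accomplishes.
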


\begin{proof}
We argue again by contradiction, and assume that $U$ has maximal
period. We may reduce to the case where $n = 3$ as in the proof of 
Lemma \ref{lem:period3}. To analyze $(1 + J)/(1 + I)$, we begin with 
some further reductions.

If $I$ contains an ideal $J'$ of $J$, then the natural homomorphism
\[ (1 + J)/(1 + I) \longrightarrow (1 + J/J')/(1 + I/J') \]
is an isomorphism by Lemma \ref{lem:alg} (ii). Thus, we may assume 
that $I$ contains no nonzero ideal of $J$.

Also, if there exists a subalgebra $I'$ of $J$ such that $I + I' = J$,
then the natural homomorphism 
\[ (1 + I')/(1 + I \cap I) \longrightarrow (1 + J)/(1 + I) \]
is an isomorphism, as follows from Lemma \ref{lem:alg} (iii) in 
view of the equality $\dim(I'/I\cap I') = \dim(J/I)$. Thus, we may
assume that there exists no proper subalgebra $I'$ of $J$ such that
$I + I' = J$. By Lemma \ref{lem:small} (ii), this is equivalent to
the assumption that $I \subset I'$ for any maximal ideal $I'$ of $J$.
In view of Lemma \ref{lem:small} (i), we may thus assume that
$I \subset J^2$.

By Lemma \ref{lem:max}, it follows that the group $(1 + J)/(1 + J^2)$ 
has maximal period. But $(1 + J)/(1 + J^2) \cong 1 + J/J^2$ is a vector
group, and hence has period $2$. Hence $\dim(J/J^2) = 1$. 
By Nakayama's lemma, we then have
\[ J = t k[t]/(t^{m+1}) = \langle x, x^2, \ldots, x^m \rangle \] 
for some $x \in J$ and a unique integer $m \geq 1$. Then 
\[ J^2 = (x^2) = \langle x^2, x^3, \ldots, x^m \rangle \] 
is the unique maximal ideal of $J$. Moreover, our reductions
mean that $I \subset \langle x^2, x^3, \ldots, x^m \rangle$ and 
$x^m \notin I$.

Consider $I' := \langle I,x^m \rangle \subset J$; this is a subalgebra 
of codimension $2$ of $J$, which contains $I$ as a maximal ideal. 
By Lemma \ref{lem:small} (ii), $I'$ is a maximal ideal of $J^2$;
hence $I' \supset J^4$ by that lemma, (iii). Since 
$J^4 = \langle x^4, x^5, \ldots, x^m \rangle$, there exist $a, b \in k$
such that $(a,b) \neq (0,0)$ and 
\[ I' = \langle a x^2 + b x^3, x^4, x^5, \ldots, x^m \rangle. \]
Moreover, $a \neq 0$: otherwise, 
$I'= \langle x^3, x^4, x^5, \ldots, x^m \rangle$ is an ideal of $J$, 
so that $(1 + J)/(1 + I')$ has dimension $2$ and period $2$; 
this yields a contradiction in view of Lemma \ref{lem:max}.

By Lemma \ref{lem:small} (iii) again, we have $I'^2 \subset I$. Thus, 
$I$ contains $x^8, x^9, \ldots$ and also $(ax^2 + bx^3)x^5$; in 
particular, $x^7 \in I$. Likewise, $(a x^2 + b x^3) x^4 \in I$
so that $x^6 \in I$. By our reductions, it follows that $x^6 = 0$. 
Also, $a^2 x^4 + b^2 x^6 = (a x^2 + b x^3)^2 \in I$; thus, $x^4 \in I$. 
Since $x$ generates the nilpotent algebra $J$, this yields
$y^4 \in I$ for all $y \in J$. As a consequence, $(1 + J)/(1 + I)$
has period at most $4$, a contradiction.
\end{proof}

\medskip

\noindent
{\sc Proof of Theorem \ref{thm:second}}.
We argue again by contradiction, and assume that $W_n$ is isogenous 
to $\bPic_{X/k}$ for some projective variety $X$ with finite non-normal 
locus. In particular, $U := \Pic^0(X)$ is unipotent. 
By \cite[Chap.~VII, no.~10, Prop.~9]{Serre}, $U$ has maximal period 
$p^n$, where $n := \dim(U)$. 
On the other hand, there exist algebras $A \subset B$ such that 
$U \cong \mu^{B/A}$, by Proposition \ref{prop:pic}.  
Since $k$ is perfect and $U$ is unipotent, we must have 
$U \cong (1 + J)/(1 + I)$ by Lemma \ref{lem:red}. But then
Lemmas \ref{lem:period}, \ref{lem:period3} and \ref{lem:period2} 
yield a contradiction.

\begin{remark}\label{rem:nil}
Consider the algebra $B := k[x]/(x^4)$, and its subalgebra $A$
generated by $x^2 + x^3$ (of square $0$). Then $U := \mu^{B/A}$ 
is a connected unipotent group of dimension $2$. If $p = 3$
(resp. $p = 2$), then $U$ has period $9$ (resp. $4$) since 
$x^3, x^2 \notin A$. By \cite[Chap.~VII, no.~10, Prop.~9]{Serre}
again, it follows that $U$ is isogenous to $W_2$. In particular,
the statement of Theorem \ref{thm:second} is optimal.
\end{remark}

\medskip

\noindent
{\bf Acknowledgements}. I warmly thank St\'ephane Druel, Philippe 
Gille, St\'ephane Guillermou, Emmanuel Peyre and Henrik Russell 
for helpful discussions or e-mail exchanges. Special thanks are 
due to Sylvain Brochard for his careful reading of a preliminary 
version of this article, and his detailed comments and corrections. 
Needless to say, the remaining mistakes are mine.

\end{document}